\newcommand{\be}{\begin{equation}}
\newcommand{\ee}{\end{equation}}
\newcommand{\bea}{\begin{eqnarray}}
\newcommand{\eea}{\end{eqnarray}}
\newcommand{\beas}{\begin{eqnarray*}}
\newcommand{\eeas}{\end{eqnarray*}}
\newcommand{\bbR}{\mathbb{R}}
\renewcommand{\v}{{\mathbf{v}}}
\newcommand{\Var}{{\rm Var}}
\newcommand{\bbP}{{\mathbb{P}}}
\newcommand{\bbE}{{\mathbb{E}}}
\newcommand{\rank}{{\rm rank}}
\newcommand{\tr}{{\rm tr}}
\newcommand{\diag}{{\rm diag}}
\begin{document}

\section{Introduction}\label{sec:intro}

Random matrix theory is an important topic in its own right and has been proven to be a powerful tool in a wide range of applications in statistics, high-energy physics, and number theory. Wigner matrices, symmetric matrices with mean-zero independent and identically distributed (i.i.d.) entries (subject to the symmetry constraint), have been a particular focus. Asymptotic and non-asymptotic properties of the spectrum of Wigner matrices have been widely studied in the literature. See, for example, \cite{anderson2010introduction,tao2012topics,vershynin2010introduction} and the references therein. 

Motivated by a range of applications, heteroskedastic Wigner-type matrices,  random matrices with independent heteroskedastic entries, have attracted much recent attention. A central problem of interest is the characterization of the dependence of the spectral norm $\|\cdot\|$ (i.e., the largest singular value of the matrix) of a heteroskedastic Wigner-type matrix on  the variances of its entries. To answer this question, Ajanki, Erd\H{o}s, Kr\"uger \cite{ajanki2017universality} established the asymptotic behavior of the resolvent, a local law down to the smallest spectral resolution scale, and bulk universality for the heteroskedastic Wigner-type matrix. Bandeira and van Handel \cite{bandeira16sharp} proved an non-asymptotic upper bound for the spectral norm. More specifically, let $Z=(Z_{ij})$ be a $p\times p$ heteroskedastic Wigner-type matrix with $\Var(Z_{ij}):=\sigma_{ij}^2$.    Bandeira and van Handel \cite{bandeira16sharp} showed that
$\bbE\left\|Z\right\| \lesssim \sigma + \sigma_*\sqrt{\log p },$ where $\sigma^2 = \max_i \sum_{j}\sigma_{ij}^2$ and $\sigma_{\ast}^2=\max_{ij}\sigma_{ij}^2$ are the column-sum-wise and entry-wise maximum variances, respectively. This bound was improved by van Handel \cite{vanhandel17spectral} to $\bbE\left\|Z\right\| \lesssim \sigma + \max_{i,j \in [p]}\sigma_{ij}^*\log i.$ Here, the matrix $\{\sigma^*_{ij}\}$ is obtained by permuting the rows and columns of the variance matrix $\{\sigma_{ij}\}$ such that $\max_j \sigma^*_{1j} \geq \max_j \sigma^*_{2j} \geq \cdots \geq \max_j \sigma^*_{pj}$. Later, Lata\l{}a and van Handel \cite{latala2017dimension} further improved it to a tight bound:
\begin{equation}\label{ineq:bound-wigner-spectral-3}
	\bbE\left\|Z\right\| \asymp \sigma + \max_{i,j \in [p]}\sigma_{ij}^*\sqrt{\log i}.
\end{equation}

In addition to the Wigner-type matrix, the Wishart-type matrix, $ZZ^\top - \mathbb{E}ZZ^\top$, also plays a crucial role in many high-dimensional statistical problems, including the principal component analysis (PCA) and factor analysis \cite{zhang2018heteroskedastic}, matrix  denoising \cite{salmon2014poisson}, and bipartite community detection \cite{florescu2016spectral}. Though there have been many results on the asymptotic and non-asymptotic properties of the homoskedastic Wishart-type matrix, where $Z$ has i.i.d entries (see \cite{bishop2017introduction} for an introduction and the references therein), the properties of the heteroskedastic Wishart-type matrices are much less understood. 

Specifically, suppose $Z$ is a $p_1 \times p_2$ random matrix with independent and zero-mean entries. In this paper, we are interested in the Wishart-type concentration: $\bbE\left\|ZZ^\top - \bbE ZZ^\top\right\|$. Define $\sigma_C^2, \sigma_R^2, \sigma_{\ast}^2$ as the column-sum-wise, row-sum-wise, and entry-wise maximum variances:
\begin{equation}\label{eq:sigma_C-sigma_R-sigma_ast}
\sigma_C^2 = \max_j \sum_{i=1}^{p_1}\sigma_{ij}^2, \quad \sigma_R^2 = \max_i \sum_{j=1}^{p_2}\sigma_{ij}^2, \quad \sigma_\ast^2 = \max_{ij} \sigma_{ij}^2.
\end{equation} 
By the symmetrization scheme and the asymmetric Wigner-type concentration inequality in \cite{bandeira16sharp}, it is not difficult to show that
\begin{equation}\label{ineq:bound-symmetry}
	\begin{split}
		& \bbE\left\|ZZ^\top - \bbE ZZ^\top\right\| \leq \bbE\left\|ZZ^\top - Z'(Z')^\top\right\| \leq 2 \bbE \left\|ZZ^\top\right\| = 2\bbE\left\|Z\right\|^2 \\
		\lesssim & \left(\sigma_C + \sigma_R + \sigma_*\sqrt{\log(p_1 \wedge p_2)}\right)^2.
	\end{split}
\end{equation}
Since $ZZ^\top - \mathbb{E}ZZ^\top$ can be decomposed into a sum of independent random matrices,
\begin{equation*}
	ZZ^\top - \mathbb{E}ZZ^\top = \sum_{j=1}^{p_2}\left(Z_{\cdot j}Z_{\cdot j}^\top - \mathbb{E}Z_{\cdot j}Z_{\cdot j}^\top\right),
\end{equation*}
one can apply the concentration inequality for the sum of independent random matrices \cite[Theorem 1]{tropp2016expected} to show that
\begin{equation}\label{ineq:bound-tropp}
\bbE\left\|ZZ^\top - \bbE ZZ^\top\right\| \lesssim \sigma_C\sigma_R\sqrt{\log p_2} + \sigma_C^2 (\log p_2)^2.
\end{equation}
However, as we will show later, these bounds are not tight.

In this paper, we establish non-asymptotic bounds for the Wishart-type concentration $\bbE\|ZZ^\top - \bbE ZZ^\top\|$. The main results include the following. We begin by  focusing on the Gaussian case in Section \ref{sec:concentration} and prove that if all entries of $Z$ are independently Gaussian,
\begin{equation}\label{ineq:upper-bound}
\begin{split}
\mathbb{E}\left\|ZZ^\top - \mathbb{E}ZZ^\top\right\| \leq 2.01\sigma_C\sigma_R + 1.01\sigma_C^2 + C_1\sigma_R\sigma_*\sqrt{\log(p_1 \wedge p_2)} + C_2\sigma_*^2\log(p_1 \wedge p_2),
\end{split}
\end{equation}
where $C_1, C_2$ are some universal constants that does not depend on the variance components $\sigma_C,\sigma_R,\sigma_*$ or matrix dimensions $p_1,p_2$. Moreover, we can set the coefficients in front of $\sigma_C\sigma_R$ and $\sigma_C^2$ arbitrarily close to $2$ and $1$, respectively, at the sacrifice of larger constants $C_1,C_2$ in \eqref{ineq:upper-bound} (see Theorem \ref{th:wishart-Gaussian} for details).

We further justify that the constants in $2\sigma_C\sigma_R + \sigma_C^2$ are essential under the homoskedastic setting. The proof of \eqref{ineq:upper-bound} is based on a Wishart-type moment method provided in Section \ref{sec:proof-main-thm}. In Section \ref{sec:lower-bound}, we provide a lower bound to show that the upper bound \eqref{ineq:upper-bound} is minimax rate-optimal in a general class of heteroskedastic random matrices. 
	
	We then consider the more general non-Gaussian setting including  sub-Gaussian, sub-exponential, heavy tailed, and bounded distributions in  Section \ref{sec:non-Gaussian}. In particular,  we establish the following concentration bound when the entries have independent sub-Gaussian distributions:
\begin{equation}\label{ineq:bound-ours}
	\begin{split}
		\bbE \left\|ZZ^\top - \bbE ZZ^\top \right\| & \lesssim \left(\sigma_C + \sigma_R + \sigma_*\sqrt{\log(p_1 \wedge p_2)}\right)^2 - \sigma_R^2. 
	\end{split}
\end{equation}
Upper bounds for the moments and probability tails of $\|ZZ^\top - \bbE ZZ^\top\|$ are developed in Section \ref{sec:tail-bounds}. 

In Sections \ref{sec:homoskedastic rows} and \ref{sec:homoskedastic columns}, we consider two variance structures arising in statistical applications and develop tight Wishart-type concentration bounds. If the random matrix $Z$ has independent sub-Gaussian entries and homoskedastic rows, i.e., $\sigma_{ij}= \sigma_i$, we prove that
$$\mathbb{E}\|ZZ^\top - \mathbb{E}ZZ^\top\| \asymp \sum_{i=1}^{p_1} \sigma_i^2 + \sqrt{p_2\sum_{i=1}^{p_1} \sigma_i^2}\cdot \max_{i \in [p_1]}\sigma_i.$$
If $Z$ has independent sub-Gaussian entries and homoskedastic column variances, i.e., $\sigma_{ij} \asymp \sigma_j$, we prove that
$$\mathbb{E}\|ZZ^\top - \mathbb{E}ZZ^\top\| \asymp \sqrt{p_1\sum_{j=1}^{p_2} \sigma_j^4} + p_1\max_{j \in [p_2]} \sigma_j^2.$$

To illustrate the usefulness of the newly established tools, we apply these tools in Section \ref{sec:application} to solve a statistical problem in heteroskedastic clustering. Specifically, we obtain a sharp signal-to-noise ratio threshold to guarantee consistent clustering.

\section{Main Results}\label{sec:main-results}

We first introduce the notation to be used in the rest of the paper. Let $a\wedge b$ and $a\vee b$ be the minimum and maximum of real numbers $a$ and $b$, respectively. We use $[d]$ to denote the set $\{1,\ldots,d\}$ for any positive integer $d$. For any vector $v$, let $\|v\|_{q} = (\sum_i |v_i|^q)^{1/q}$ be the vector $\ell_q$ norm; specifically, $\|v\|_\infty = \sup_i |v_i|$. For any sequences $\{a_n\}, \{b_n\}$, denote $a \lesssim b$ (or $b_n\gtrsim a_n$) if there exists a uniform constant $C>0$ such that $a \leq Cb$. If $a\lesssim b$ and $a\gtrsim b$ both hold, we say $a\asymp b$. For any $\alpha\geq 1$, the Orlicz $\psi_{\alpha}$ norm of any random variable $X$ is defined as 
$$\|X\|_{\psi_\alpha} = \inf \left\{x\geq 0: \mathbb{E}\exp\left((|X|/x)^\alpha\right) \leq 2\right\}.$$
In the literature \cite{vershynin2010introduction,vladimirova2019sub}, a random variable is often called sub-Gaussian, sub-exponential, or sub-Weibull with tail parameter $(1/\alpha)$, if $\|X\|_{\psi_2}\leq C$, $\|X\|_{\psi_1}\leq C$, and $\|X\|_{\psi_{\alpha}} \leq C$, respectively. The matrix spectral norm is defined as $\|X\| = \sup_{u, v}\frac{u^\top X v}{\|u\|_2\|v\|_2}$. The capital letters $C, C_1, \tilde{C}$ and lowercase letters $c, c_1, c_0$ represent the generic large and small constants, respectively, whose exact values may vary from place to place. 

\subsection{Concentration of heteroskedastic Wishart matrix}\label{sec:concentration}

We begin by considering the Gaussian case where the entries $Z_{ij}\sim N(0, \sigma_{ij}^2)$ independently. The following theorem provides an upper bound for the concentration and is one of the main results of the paper.

\begin{theorem}[Wishart-type Concentration for Gaussian random matrix]\label{th:wishart-Gaussian}
	Suppose $Z$ is a $p_1$-by-$p_2$ random matrix and $Z_{ij}\sim N(0, \sigma_{ij}^2)$ independently. Then for any $\epsilon_1, \epsilon_2 > 0$,
	\begin{equation}\label{ineq:th1-bound}
	\begin{split}
	& \mathbb{E}\left\|ZZ^\top - \mathbb{E}ZZ^\top\right\|\\
	\leq & (1+\epsilon_1)\left\{2\sigma_C\sigma_R + (1+\epsilon_2)\sigma_C^2 + C_1(\epsilon_1)\sigma_R\sigma_*\sqrt{\log(p_1 \wedge p_2)} + C_2(\epsilon_1,\epsilon_2)\sigma_*^2\log(p_1 \wedge p_2)\right\},
	\end{split}
	\end{equation}
	where $C_1(\epsilon_1) = 10(1+\epsilon_1)\sqrt{\lceil 1/\log(1+\epsilon_1)\rceil}$ and $C_2(\epsilon_1, \epsilon_2) = (1+\epsilon_1)\lceil 1/\log(1+\epsilon_1)\rceil\left(\frac{25}{\epsilon_2} + 24\right)$.
\end{theorem}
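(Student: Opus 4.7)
The plan is to use the moment method. Set $M = ZZ^\top - \mathbb{E}ZZ^\top$. By the spectral inequality $\|M\|^{2k}\leq \operatorname{tr}(M^{2k})$ and Jensen's inequality, $\mathbb{E}\|M\| \leq (\mathbb{E}\operatorname{tr}(M^{2k}))^{1/(2k)}$. I will bound $\mathbb{E}\operatorname{tr}(M^{2k})$ combinatorially and then choose $k \asymp \log(p_1 \wedge p_2)$ so that the $1/(2k)$-th power of any polynomial-in-$(p_1\wedge p_2)$ prefactor is at most $1+\epsilon_1$.

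First I would expand
\[
\operatorname{tr}(M^{2k})=\sum_{i_1,\ldots,i_{2k}}\sum_{j_1,\ldots,j_{2k}}\prod_{l=1}^{2k}\bigl(Z_{i_l j_l}Z_{i_{l+1}j_l}-\mathbb{E}Z_{i_l j_l}Z_{i_{l+1}j_l}\bigr),
\]
with indices cyclic ($i_{2k+1}=i_1$), then apply Wick/Isserlis to rewrite its expectation as a sum over pairings of the $4k$ Gaussian factors. Each surviving pairing contributes a product of $2k$ variances $\sigma_{i_l j_l}^2$ such that the induced bipartite multigraph on the row and column indices is a closed Eulerian walk in which every edge is traversed an even number of times. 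Crucially, the centering by $\mathbb{E}ZZ^\top$ eliminates exactly those pairings that would collapse a single column step $l$ into a pure $\sigma_{i_l j_l}^2$ factor.

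The heart of the argument is classifying these pairings by the topology of the associated bipartite walk, adapting the F\"uredi--Koml\'os/Marchenko--Pastur analysis to the Wishart setting. Tree-like (planar, non-crossing) pairings yield the leading Marchenko--Pastur edge: summing row indices against column sums of $\sigma_{ij}^2$ (bounded by $\sigma_R^2$) and column indices against row sums (bounded by $\sigma_C^2$) gives a contribution of $(2\sigma_C\sigma_R + \sigma_C^2)^k$, where the $2\sigma_C\sigma_R$ part comes from trees in which each row--column edge is traversed exactly twice and the $\sigma_C^2$ part comes from the ``Wishart loops'' in which two consecutive column indices coincide. Non-planar pairings with $s\geq 1$ excess pairs beyond a spanning tree number at most $(Ck)^{Cs}$, and each pays a factor of $\sigma_*^2$ per excess edge (since an entry-wise bound replaces one row or column sum), contributing terms of order $(2\sigma_C\sigma_R+\sigma_C^2)^{k-s}(Ck\,\sigma_R\sigma_*)^s$ and $(2\sigma_C\sigma_R+\sigma_C^2)^{k-s}(Ck\,\sigma_*^2)^s$.

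Finally I would take $k=\lceil \log(p_1\wedge p_2)/\log(1+\epsilon_1)\rceil$ so that $(p_1\wedge p_2)^{1/(2k)}\leq (1+\epsilon_1)^{1/2}$, and extract the $2k$-th root term by term using $(a+b)^{1/(2k)}\leq a^{1/(2k)}+b^{1/(2k)}$ and $(Ck)^{1/(2k)}\to 1$. This separates the leading $2\sigma_C\sigma_R + (1+\epsilon_2)\sigma_C^2$ (the $\epsilon_2$ slack absorbs the Wishart-loop cross-terms against the pure $2\sigma_C\sigma_R$ tree contribution) from the $\sigma_R\sigma_*\sqrt{\log(p_1\wedge p_2)}$ and $\sigma_*^2\log(p_1\wedge p_2)$ corrections, producing the stated constants $C_1(\epsilon_1)$ and $C_2(\epsilon_1,\epsilon_2)$. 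The main obstacle is the combinatorial bookkeeping in the classification step: verifying that the planar pairings yield \emph{exactly} the sharp constants $2$ in front of $\sigma_C\sigma_R$ and $1$ in front of $\sigma_C^2$ (rather than $4$ or $2$) requires carefully matching the bipartite Narayana-type count to the heteroskedastic variance sums, and then absorbing the mixed terms between Wishart-loop and tree contributions into the $\epsilon_1,\epsilon_2$ slack in a quantitatively explicit way.
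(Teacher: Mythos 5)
Both your proposal and the paper open with the same moment method, reducing $\mathbb{E}\|M\|$ to $(\mathbb{E}\tr M^{2k})^{1/2k}$ and expanding the trace over bipartite closed walks, but the two proofs diverge fundamentally in how they handle that expansion. You propose to run the full F\"uredi--Koml\'os/Marchenko--Pastur classification directly on the heteroskedastic expansion: enumerate planar versus non-planar pairings, extract the MP-edge asymptotics from bipartite Narayana-type counts, and control the excess-edge corrections term by term. The paper instead inserts a two-step reduction: Lemma~\ref{lm:Gaussian-comparison} compares the heteroskedastic trace moment, shape by shape, to that of a \emph{homoskedastic} standard Wishart matrix $HH^\top - \mathbb{E}HH^\top$ with auxiliary dimensions $m_1=\lceil\sigma_C^2\rceil+q-1$, $m_2=\lceil\sigma_R^2\rceil+q-1$; and Lemma~\ref{lm:iid-Gaussian-moment} then bounds the homoskedastic moment analytically, via the Gaussian ensemble singular-value tail $\sigma_{\max}(H)\lesssim\sqrt{m_1}+\sqrt{m_2}+t$, which hands over the factors $2\sqrt{m_1 m_2}+m_1$ (i.e.\ $2\sigma_C\sigma_R+\sigma_C^2$) for free. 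In other words the paper never needs the Narayana/Dyck-path asymptotics at all; the combinatorics is used only for the comparison (counting, for each shape, how many heteroskedastic cycles can dominate how many homoskedastic ones), and the sharp constants come from a closed-form analytic bound on the Gaussian edge.

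This matters because the step you flag as your ``main obstacle'' --- extracting the exact coefficients $2$ and $1$ in $2\sigma_C\sigma_R+\sigma_C^2$ from the planar pairing count and then absorbing the mixed Wishart-loop cross-terms into the $\epsilon_1,\epsilon_2$ slack --- is precisely the part that is hard to make quantitative in the F\"uredi--Koml\'os route, and it is precisely the part the paper's architecture is designed to avoid. A direct bipartite-Catalan asymptotic would have to track the deviation of $\sum_{m_L+m_R=2k+1} N(k,m_L)\,\sigma_C^{2(m_L-1)}\sigma_R^{2m_R}$ from its limiting generating-function value with explicit, $\epsilon$-small error, and also handle the $s\geq 1$ excess-edge terms with constants tight enough that the $(1+\epsilon_1)$ slack really does absorb them at $q\asymp\log(p_1\wedge p_2)$. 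Your sketch is structurally coherent and the classification by excess edges is the right idea, but as written it stops exactly at this bookkeeping and does not establish the stated inequality with the explicit constants $C_1(\epsilon_1), C_2(\epsilon_1,\epsilon_2)$. (A minor point: the leading planar contribution to $\mathbb{E}\tr M^{2k}$ should scale like $(2\sigma_C\sigma_R+\sigma_C^2)^{2k}$, not $(\cdot)^k$ as written, since each pairing yields a product of $2k$ variance factors; this exponent bookkeeping would need to be corrected in a careful write-up.) If you want to pursue the direct route you would essentially be re-proving a quantitative MP-edge bound from scratch, whereas the paper's comparison lemma plus the known Gaussian singular-value concentration is both shorter and gives the constants painlessly.
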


\begin{remark}[Lower bound for  the homoskedastic case]{\rm 
	If $Z$ has independent and homoskedastic Gaussian entries, i.e., $Z_{ij} \overset{iid}{\sim} N(0,1)$, then $\sigma_C = \sqrt{p_1}, \sigma_R = \sqrt{p_2}$, and Theorem \ref{th:wishart-Gaussian} implies
	\begin{equation}\label{ineq:constant-sharp-bound}
	\bbE\left\|ZZ^\top - \bbE ZZ^\top\right\| \leq (1+\epsilon)\left(\sigma_C^2+2\sigma_C\sigma_R\right) + C_\epsilon\sigma_R\sqrt{\log(p_1 \wedge p_2)}
	\end{equation}
	for any $\epsilon > 0$ and constant $C_\epsilon$ only depending on $\epsilon$. 
On the other hand, we have
\begin{proposition}\label{pr:lower}
	If $Z$ is a $p_1$-by-$p_2$ matrix with i.i.d. homoskedastic Gaussian entries, then
	\begin{equation}\label{ineq:constant-sharp-bound2}
	\liminf_{p_1, p_2 \rightarrow \infty} \frac{\bbE\left\|ZZ^\top - \bbE ZZ^\top\right\|}{2\sigma_C\sigma_R+\sigma_C^2} \geq 1.
	\end{equation}
\end{proposition}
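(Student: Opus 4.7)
The plan is to lower-bound $\bbE\|ZZ^\top-\bbE ZZ^\top\|$ by $\bbE\lambda_{\max}(ZZ^\top)-p_2$ and then invoke classical Bai--Yin spectral asymptotics for Wishart matrices. In the homoskedastic Gaussian case $\sigma_C=\sqrt{p_1}$, $\sigma_R=\sqrt{p_2}$ and $\bbE ZZ^\top = p_2 I_{p_1}$, so the target denominator rewrites as
\[
2\sigma_C\sigma_R+\sigma_C^2 \;=\; p_1+2\sqrt{p_1 p_2} \;=\; (\sqrt{p_1}+\sqrt{p_2})^2-p_2.
\]

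Concretely, using $\|M\|\ge\lambda_{\max}(M)$ for symmetric $M$ and Jensen's inequality,
\[
\bbE\|ZZ^\top - p_2 I\| \;\ge\; \bbE\lambda_{\max}(ZZ^\top)-p_2 \;=\; \bbE\sigma_{\max}(Z)^2-p_2 \;\ge\; \bigl(\bbE\sigma_{\max}(Z)\bigr)^2-p_2.
\]
I would then combine two classical facts about Gaussian matrices: (i) the Bai--Yin strong law, $\sigma_{\max}(Z)/(\sqrt{p_1}+\sqrt{p_2})\to 1$ a.s.\ as $p_1,p_2\to\infty$, and (ii) the $1$-Lipschitzness of $Z\mapsto\sigma_{\max}(Z)$ in the Frobenius norm, which via Gaussian concentration yields $\mathrm{Var}(\sigma_{\max}(Z))\le 1$ and sub-Gaussian deviations, and hence uniform integrability. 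Together these upgrade (i) to $\bbE\sigma_{\max}(Z)=\sqrt{p_1}+\sqrt{p_2}-g$ with $g=o(\sqrt{p_1}+\sqrt{p_2})$, and substituting gives
\[
\bbE\|ZZ^\top - p_2 I\| \;\ge\; (p_1+2\sqrt{p_1 p_2}) - 2g(\sqrt{p_1}+\sqrt{p_2}) + g^2,
\]
so the required $\liminf\ge 1$ reduces to verifying $g(\sqrt{p_1}+\sqrt{p_2}) = o(p_1+2\sqrt{p_1 p_2})$, equivalently $g = o(\sqrt{p_1\wedge p_2})$, since a short calculation shows $(p_1+2\sqrt{p_1 p_2})/(\sqrt{p_1}+\sqrt{p_2}) \asymp \sqrt{p_1\wedge p_2}$.

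The hard part will be establishing this sharper rate uniformly in the aspect ratio. When $p_1/p_2$ stays in a compact subset of $(0,\infty)$, $\sqrt{p_1\wedge p_2}$ and $\sqrt{p_1}+\sqrt{p_2}$ are comparable and the bound $g=o(\sqrt{p_1}+\sqrt{p_2})$ from Bai--Yin plus uniform integrability is already enough. In the degenerate regime $p_1\wedge p_2 = o(p_1\vee p_2)$, however, the plain Bai--Yin rate falls short, and I would invoke Tracy--Widom-type edge concentration for the largest eigenvalue of a sample covariance matrix (\emph{e.g.}~Johansson, Johnstone), which delivers $g = O((p_1\wedge p_2)^{-1/6})$ and hence $g/\sqrt{p_1\wedge p_2}\to 0$ as required. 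Securing this aspect-ratio-uniform rate is where I expect the main technical work of the proof to lie.
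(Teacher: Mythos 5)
Your proposal follows the same route as the paper's two-line proof: the key step is the reverse triangle inequality $\bbE\|ZZ^\top-p_2 I\|\geq\bbE\|Z\|^2-p_2$, after which everything hinges on the Bai--Yin law for $\sigma_{\max}(Z)$. What you add is worthwhile care that the paper elides: you spell out the Jensen step $\bbE\sigma_{\max}^2\geq(\bbE\sigma_{\max})^2$, the concentration/uniform-integrability argument for turning almost-sure convergence into convergence of expectations, and — most substantively — the observation that when $p_2/p_1\to\infty$ the conclusion $\liminf\geq 1$ genuinely requires a rate on the gap $g:=\sqrt{p_1}+\sqrt{p_2}-\bbE\sigma_{\max}(Z)$, not merely $g=o(\sqrt{p_1}+\sqrt{p_2})$. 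The Tracy--Widom machinery is, however, unnecessary: the Bai--Yin theorem in the very form the paper cites (Vershynin, Theorem~5.31) already asserts $\sigma_{\max}(Z)=\sqrt{p_1}+\sqrt{p_2}+o(\sqrt{p_1\wedge p_2})$ almost surely, and combined with the sub-Gaussian fluctuations of $\sigma_{\max}(Z)$ (which you already use for uniform integrability) this immediately yields $g=o(\sqrt{p_1\wedge p_2})$, which is all that is needed. Two minor corrections: writing $p_1+2\sqrt{p_1p_2}=\sqrt{p_1}\left(\sqrt{p_1}+2\sqrt{p_2}\right)$ shows that $(p_1+2\sqrt{p_1p_2})/(\sqrt{p_1}+\sqrt{p_2})\asymp\sqrt{p_1}$, not $\sqrt{p_1\wedge p_2}$ — harmless, since $\sqrt{p_1\wedge p_2}\leq\sqrt{p_1}$ and when $p_1>p_2$ the correction term is already $O(1)$ without any rate; and the bounded-aspect-ratio hypothesis in Bai--Yin poses no obstacle for a $\liminf$ statement, by passing to a subsequence along which $(p_1\wedge p_2)/(p_1\vee p_2)$ converges.
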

	Proposition \eqref{pr:lower} and \eqref{ineq:constant-sharp-bound} together indicate that $(\sigma_C^2 + 2\sigma_C\sigma_R)$ in the upper bound of Theorem \ref{th:wishart-Gaussian} are sharp in the homoskedastic case. In Section \ref{sec:lower-bound}, we establish a minimax lower bound to show that all four terms in the upper bound \eqref{ineq:bound-ours} are essential when $Z$ is a general heteroskedastic random matrix.
}\end{remark}

\subsection{Proof of Theorem \ref{th:wishart-Gaussian}}\label{sec:proof-main-thm}

The proof of Theorem \ref{th:wishart-Gaussian} relies on a moment method and the following fact: for a $p$-by-$p$ symmetric matrix $A$ (in the context of Theorem \ref{th:wishart-Gaussian}, $A = ZZ^\top - \mathbb{E}ZZ^\top$) and a even number $q \asymp \log (p)$, we have
$$\|A\|\approx \left(\tr(A^{q})\right)^{1/q}.$$ 
We introduce two lemmas for the proof of Theorem \ref{th:wishart-Gaussian}. First, Lemma \ref{lm:Gaussian-comparison} builds a comparison between the $q$-th moment of the heteroskedastic Wishart-type matrix $ZZ^\top - \bbE ZZ^\top$ with a homoskedastic analogue $HH^\top - \bbE HH^\top$. The complete proof of Lemma \ref{lm:Gaussian-comparison} is postponed to Section \ref{sec:proof-main}.
\begin{lemma}[Gaussian Comparison]\label{lm:Gaussian-comparison}
	Suppose $Z\in \mathbb{R}^{p_1\times p_2}$ has independent Gaussian entries: $Z_{ij} \sim N(0, \sigma_{ij}^2)$. Let $m_1 = \lceil \sigma_C^2\rceil + q-1$ and $m_2 = \lceil\sigma_R^2\rceil+q-1$. Suppose $H \in \bbR^{m_1 \times m_2}$ has i.i.d. $N(0,1)$ entries. Then for any $q\geq 2$,
	\begin{equation}\label{ineq:ZZHH-Gaussian}
	\mathbb{E} \tr\left\{(ZZ^\top - \mathbb{E}ZZ^\top)^{q}\right\} \leq \left(\frac{p_1}{m_1}\wedge \frac{p_2}{m_2}\right)\mathbb{E} \tr\left\{(HH^\top - \mathbb{E}HH^\top)^{q}\right\}.
	\end{equation}
\end{lemma}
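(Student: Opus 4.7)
The plan is to establish \eqref{ineq:ZZHH-Gaussian} via a Wick-type moment expansion followed by a term-by-term comparison between the heteroskedastic and homoskedastic sides. First I would expand
\[
\bbE\tr\bigl((ZZ^\top-\bbE ZZ^\top)^q\bigr) = \sum_{i_1,\ldots,i_q\in[p_1]}\sum_{j_1,\ldots,j_q\in[p_2]} \bbE\prod_{a=1}^{q}\bigl(Z_{i_a j_a}Z_{i_{a+1} j_a} - \delta_{i_a,i_{a+1}}\sigma_{i_a j_a}^2\bigr),
\]
with the cyclic convention $i_{q+1}=i_1$, and apply the Wick formula to each inner expectation. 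The centering annihilates the pairings that couple the two Gaussians inside the same factor, so only the ``non-trivial'' pairings $\pi$ of the $2q$ Gaussians survive, and each such pairing is encoded by a bipartite multigraph $G_\pi$ whose vertices are equivalence classes of row indices (of count $r(\pi)$) and column indices (of count $c(\pi)$), and whose edges are the $q$ variance factors forced by $\pi$.

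Denote by $S_\pi$ the heteroskedastic contribution $\sum_{\text{consistent}}\prod_{\text{edges}}\sigma_{..}^2$ and by $N_\pi := m_1^{r(\pi)} m_2^{c(\pi)}$ the corresponding count for the homoskedastic matrix $H$. Since the combinatorial coefficients produced by Wick are identical in the two expansions, the lemma reduces to the pairing-wise estimates $S_\pi \le (p_1/m_1) N_\pi$ and $S_\pi \le (p_2/m_2) N_\pi$, whose minimum then yields \eqref{ineq:ZZHH-Gaussian} after summing over $\pi$. To control $S_\pi$ I would select a spanning tree $T\subset G_\pi$, designate a root vertex, and evaluate the index sum by iteratively stripping leaves of $T$: a row-leaf is absorbed via $\sum_i \sigma_{ij}^2 \le \sigma_C^2$, a column-leaf via $\sum_j \sigma_{ij}^2 \le \sigma_R^2$, and any non-tree edge entrywise via $\sigma_{ij}^2 \le \sigma_*^2 \le \sigma_C^2\wedge\sigma_R^2$. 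The root contributes the free factor $p_1$ or $p_2$ depending on which side it lies, and invoking $m_1\ge\sigma_C^2$ and $m_2\ge\sigma_R^2$ converts each peeled $\sigma_C^2$ or $\sigma_R^2$ into the corresponding $m_1$ or $m_2$ factor inside $N_\pi$, producing the desired prefactor $p_1/m_1$ (resp.\ $p_2/m_2$).

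The main obstacle will be the combinatorial bookkeeping for non-tree pairings, where $G_\pi$ carries cycles or repeated edges so that the leaf-stripping argument must trade degrees of freedom for extra $\sigma_*^2$ factors. The slack $+(q-1)$ in the definitions of $m_1$ and $m_2$ is designed precisely to absorb such deficits: the number of non-tree edges in any $G_\pi$ is at most $q-(r(\pi)+c(\pi))+1 \le q-1$, and each can be paid for by one unit of the slack via $\sigma_*^2 \le m_1 \wedge m_2$. Verifying this accounting uniformly over all pairing topologies, including disconnected $G_\pi$, multi-edges, and the harmless integer-ceiling adjustment $\lceil\sigma_C^2\rceil\le \sigma_C^2 + 1$, is the most delicate part of the argument; once it is in place, summing the pairing-wise inequalities over $\pi$ and matching with the corresponding homoskedastic expansion of $\bbE\tr((HH^\top-\bbE HH^\top)^q)$ completes the proof.
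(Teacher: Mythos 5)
Your route is correct but genuinely different from the paper's. The paper groups the trace expansion by the \emph{shape} of the length-$2q$ cycle on the bipartite graph (i.e.\ the exact pattern of coincidences among the $u_k$'s and the $v_k$'s), invokes the explicit Gaussian moment $\bbE G^{\alpha}(G^2-1)^{\beta}$ together with its positivity (Lemma~\ref{lm:Gaussian-moments}), bounds the heteroskedastic side via the shape-counting estimate of Lemma~\ref{lm:shape-bound}, and on the homoskedastic side uses the \emph{exact} falling-factorial count of cycles with a given shape, lower-bounded by $m_1\sigma_C^{2(m_L-1)}\sigma_R^{2m_R}$. You instead decompose by Wick pairings of the $2q$ Gaussian factors, after the (correct) observation that centering makes the self-loop pairings cancel. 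The two groupings are not refinements of one another, but both then reduce to the same kind of spanning-tree leaf-stripping. Two remarks on your sketch. First, your worry about disconnected $G_\pi$ is vacuous: the cycle $i_1\to j_1\to\cdots\to j_q\to i_1$ descends, after quotienting by the identifications forced by $\pi$, to a closed walk on $G_\pi$ that uses every one of its $q$ edges exactly twice and visits every vertex class, so $G_\pi$ is always connected and has $q\ge r(\pi)+c(\pi)-1$ edges. Second, your accounting of the $+(q-1)$ slack is off. In your Wick grouping the homoskedastic contribution of a pairing is the \emph{power} $N_\pi=m_1^{r(\pi)}m_2^{c(\pi)}$ (distinct classes are allowed to collide), so after normalizing $\sigma_\ast=1$ the non-tree edges are absorbed simply by $\sigma_{ij}^2\le 1$, and the tree-edge comparison requires only $\sigma_C^2\le m_1$ and $\sigma_R^2\le m_2$ --- no slack at all, which makes the stated choice $m_1=\lceil\sigma_C^2\rceil+q-1$ more than enough but not the driver of the argument. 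The $+(q-1)$ slack is what the \emph{paper's} grouping genuinely needs, because there the per-shape homoskedastic count is $m_1(m_1-1)\cdots(m_1-m_L+1)$ and one must keep every factor $\ge\sigma_C^2$ for $m_L$ as large as $q$. What your route buys is a cleaner Gaussian argument that sidesteps the $\bbE G^\alpha(G^2-1)^\beta$ computations and even yields a version of \eqref{ineq:ZZHH-Gaussian} with smaller $m_1,m_2$; what the paper's route buys is that the shape-and-moment framework transfers directly to the sub-Gaussian and bounded-entry settings (Lemmas~\ref{lm:subGaussian-comparison} and~\ref{lm:Z-M-comparison-uniform}), where Wick's formula is no longer available.
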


\begin{remark}[Proof sketch of Lemma \ref{lm:Gaussian-comparison}]{\rm 
Previously, \cite[Proposition 2.1]{bandeira16sharp} compared the moments of the Wigner-type matrices (i.e., $Z$ is symmetric and thus $p_1 = p_2 = p, \sigma_C = \sigma_R = \sigma$) by the expansion $\bbE \tr(Z^{2p}) = \sum_{u_1,\ldots,u_{2q}}\bbE(Z_{u_1u_2}Z_{u_2u_3}\cdots Z_{u_{2p}u_1})$ and counting the cycles in a reduced unipartite graph:
\begin{equation}\label{ineq:wigner-comparison}
\bbE \tr(Z^{2q}) \leq \frac{p}{\lceil \sigma^2 \rceil + q}\bbE \tr(H^{2q}).
\end{equation}
Compared to the expansion of Wigner-type random matrix $\mathbb{E}\tr(Z^{2q})$, the expansion of Wishart-type random matrix $\mathbb{E} \tr\left\{(ZZ^\top - \mathbb{E}ZZ^\top)^{q}\right\}$ is much more complicated:
\begin{equation}\label{eq:main-sketch-1}
\begin{split}
& \mathbb{E} \tr\left\{(ZZ^\top - \mathbb{E}ZZ^\top)^{q}\right\} = \sum_{\substack{u_{q+1} = u_1, \ldots, u_{q} \in [p_1]\\v_1,\ldots, v_q \in [p_2]}} \mathbb{E}\prod_{k=1}^q \left(Z_{u_k, v_k} Z_{u_{k+1}, v_k} - \sigma_{u_k, v_k}^2\cdot 1_{\{u_k = u_{k+1}\}}\right)\\
& = \cdots = \sum_{\mathbf{c}\in ([p_1]\times [p_2])^q} \prod_{k=1}^q \sigma_{u_k, v_k} \sigma_{u_{k+1}, v_k}\prod_{(i, j)\in [p_1]\times [p_2]} \mathbb{E} G_{ij}^{\alpha_{ij}(\mathbf{c})}\left(G_{ij}^2 - 1\right)^{\beta_{ij}(\mathbf{c})},
\end{split}
\end{equation}
where $([p_1] \times [p_2])^{q}$ is the set of all cycles of length $2q$ on a $p_1$-by-$p_2$ complete bipartite graph, $G_{ij} = Z_{ij}/\sigma_{ij}$ are i.i.d. standard normal distributed, and $\alpha_{ij}(\mathbf c), \beta_{ij}(\mathbf c)$ are some graphical characteristic quantities of cycle $\mathbf c$ to be defined later. By gathering the cycles with the same ``shape" $\mathbf s$, we can show:
\begin{equation}\label{ineq:main-sketch-2}
\begin{split}
\mathbb{E}\tr\left\{(ZZ^\top - \mathbb{E}ZZ^\top)^{q}\right\}\leq  & \sum_{\mathbf{s}} \prod_{\substack{\alpha, \beta \geq 0}} \left\{\mathbb{E} G^\alpha (G^2 - 1)^\beta\right\}^{m_{\alpha, \beta}(\mathbf{s})}\\
&  \cdot \left\{p_1 \sigma_C^{2(m_L(\mathbf{s})-1)}\sigma_R^{2m_R(\mathbf{s})}\right\} \wedge \left\{p_2 \sigma_C^{2m_L(\mathbf{s})}\sigma_R^{2(m_R(\mathbf{s})-1)}\right\},
\end{split}
\end{equation}  
where $m_{\alpha,\beta}(\mathbf s), m_L(\mathbf s)$ and $m_R(\mathbf s)$ are some graphical properties of the cycles with shape $\mathbf s$ to be defined later and $G \sim N(0,1)$.  Meanwhile, we can develop a lower bound for the moment of standard Wishart matrix:
\begin{equation}\label{ineq:main-sketch-3}
\begin{split}
\mathbb{E}\tr\left((HH^\top - \bbE HH^\top)^q\right)
\geq & \sum_{\mathbf{s}}\prod_{\alpha, \beta \geq 0} \mathbb{E} \left\{G^\alpha (G^2 - 1)^\beta\right\}^{m_{\alpha, \beta}(\mathbf{s})}\\
& \cdot  \left\{m_1 \sigma_C^{2m_L(\mathbf{s})-2}\cdot \sigma_R^{2m_R(\mathbf{s})}\right\} \vee \left\{m_2 \sigma_C^{2m_L(\mathbf{s})}\cdot \sigma_R^{2m_R(\mathbf{s})-2}\right\}.
\end{split}
\end{equation}
Lemma \ref{lm:Gaussian-comparison} follows by combining \eqref{ineq:main-sketch-2} and \eqref{ineq:main-sketch-3}. 
}\end{remark}

Next, Lemma \ref{lm:iid-Gaussian-moment} gives an upper bound on the moment of the standard Wishart matrix. The complete proof is provided in Section \ref{sec:proof-main}.
\begin{lemma}\label{lm:iid-Gaussian-moment}
	Suppose $H\in \mathbb{R}^{m_1\times m_2}$ has i.i.d. standard Gaussian entries. Then for any integer $q \geq 2$,
	$$\left(\mathbb{E}\|HH^\top - \mathbb{E}HH^\top\|^{q}\right)^{1/q} \leq 2\sqrt{m_1m_2} + m_1 + 4(\sqrt{m_1}+\sqrt{m_2})\sqrt{q} + 2q,$$
	$$\left(\mathbb{E} \tr\left\{(HH^\top - \mathbb{E}HH^\top)^q\right\}\right)^{1/q} \leq 2^{1/q}(m_1\wedge m_2)^{1/q}\cdot \left(2\sqrt{m_1m_2} + m_1 + 4(\sqrt{m_1}+\sqrt{m_2})\sqrt{q} + 2q\right).$$
\end{lemma}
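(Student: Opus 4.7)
The plan is to prove both moment bounds by first establishing a non-asymptotic tail bound for $\|A\|$, where $A := HH^\top - \bbE HH^\top = HH^\top - m_2 I_{m_1}$. I will then integrate to obtain the operator-norm moment bound, and reduce the trace bound to it by a spectral decomposition that handles the rank-deficient case $m_1 > m_2$.

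First, I would apply the Davidson--Szarek bounds together with Gaussian concentration for $1$-Lipschitz functions to get, for every $t \geq 0$,
\[
\Prob(s_{\max}(H) \geq \sqrt{m_1}+\sqrt{m_2}+t) \leq e^{-t^2/2}, \qquad \Prob(s_{\min}(H) \leq |\sqrt{m_1}-\sqrt{m_2}|-t) \leq e^{-t^2/2},
\]
where $s_{\min}(H)$ denotes the smallest nontrivial singular value. The eigenvalues of $A$ consist of $\{s_i(H)^2 - m_2\}_{i=1}^{r}$, with $r := m_1 \wedge m_2$, together with $m_1 - r$ copies of $-m_2$. Expanding $(\sqrt{m_1}+\sqrt{m_2}+t)^2 - m_2 = m_1 + 2\sqrt{m_1 m_2} + 2t(\sqrt{m_1}+\sqrt{m_2}) + t^2$ and checking the cases $m_1 \leq m_2$ and $m_1 > m_2$ separately (in the latter, the $-m_2$ eigenvalues contribute $m_2 \leq m_1 + 2\sqrt{m_1 m_2}$ to $\|A\|$) yields the unified tail bound
\[
\Prob\bigl(\|A\| \geq m_1 + 2\sqrt{m_1 m_2} + 2t(\sqrt{m_1}+\sqrt{m_2}) + t^2\bigr) \leq 2 e^{-t^2/2}.
\]

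Next, I would convert this into the $q$-th moment bound. Introduce $T \geq 0$ via $(\|A\| - m_1 - 2\sqrt{m_1 m_2})_+ = 2T(\sqrt{m_1}+\sqrt{m_2}) + T^2$, so that the tail bound translates to $\Prob(T > t) \leq 2e^{-t^2/2}$. Minkowski's inequality then gives
\[
(\bbE\|A\|^q)^{1/q} \leq m_1 + 2\sqrt{m_1 m_2} + 2(\sqrt{m_1}+\sqrt{m_2})(\bbE T^q)^{1/q} + (\bbE T^{2q})^{1/q},
\]
and the standard Gaussian-tail moment estimate $\bbE T^q \leq q\,2^{q/2}\,\Gamma(q/2)$ combined with $\Gamma(q/2) \leq (q/2)^{q/2-1}$ produces moments of the right orders $\sqrt{q}$ and $q$ to yield the first inequality of the lemma.

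Finally, for the trace bound, the spectral decomposition described above gives
\[
\tr(A^q) \leq r\,\|A\|^q + (m_1 - r)\,m_2^q.
\]
Let $B$ denote the right-hand side of the first claim, so that $B \geq m_1$. When $m_1 \leq m_2$ the correction vanishes. When $m_1 > m_2$, the arithmetic inequality $(m_1 - m_2)\,m_2^q \leq m_1\,m_2^q \leq m_2\,m_1^q \leq (m_1 \wedge m_2)\,B^q$ (using $q \geq 2$ and $m_1 \geq m_2$) absorbs the correction into a second copy of the main term. Taking expectations and using $\bbE\|A\|^q \leq B^q$ gives $\bbE\tr(A^q) \leq 2(m_1 \wedge m_2)\,B^q$, and the second claim follows. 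The main obstacle I anticipate is the bookkeeping of numerical constants in the Minkowski step: achieving exactly $4(\sqrt{m_1}+\sqrt{m_2})\sqrt{q} + 2q$ rather than slightly larger multiples requires sharpening either the union-bound constant $2e^{-t^2/2}$ (perhaps by a direct one-sided control that exploits $s_{\min}^2 \leq m_2$ generically) or the Gamma-function moment inequalities for $T$ and $T^2$.
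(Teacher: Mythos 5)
Your approach is essentially the same as the paper's: tail bounds for the extreme singular values of $H$ (Davidson--Szarek/Vershynin), the algebraic expansion $\|HH^\top - m_2 I_{m_1}\| \leq (W + \sqrt{m_1}+\sqrt{m_2})^2 - m_2$ with $W$ the excess deviation, Minkowski in $L^q$, and a spectral argument to reduce the trace bound to the operator-norm bound (your direct eigenvalue count is equivalent to the paper's trick of subtracting $(-1)^q m_2^q I_{m_1}$ to expose the rank-$m_2$ part). The constant-factor obstacle you flag at the end is real, and the paper resolves it exactly as you suggest: the inequalities $(2^{q+1}q\Gamma(q))^{1/q} \leq 2q$ and $(2^{q/2}q\Gamma(q/2))^{1/q} \leq 2\sqrt{q}$ are checked numerically for $2 \leq q \leq 10$ and proved via a sharper Gamma-function bound (Bat{\i}r's estimate, \cite{batir2017bounds}) for $q \geq 11$, so the crude bound $\Gamma(q)\leq q^{q-1}$ you mention would indeed give $4q$ rather than $2q$ and must be replaced by that refinement.
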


\begin{remark}[Proof idea of Lemma \ref{lm:iid-Gaussian-moment}]\label{rm:lemma2}{\rm
Let $\sigma_i(H)$ be the $i$-th singular value of $H$. The proof of Lemma \ref{lm:iid-Gaussian-moment} utilizes the following fact:
\begin{equation*}
\|HH^\top - \mathbb{E}HH^\top\| = \|HH^\top - m_2I_{m_1}\| = \max\left\{\sigma_1^2(H)-m_2, m_2 - \sigma_{m_1}^2(H)\right\}
\end{equation*}
and the concentration inequalities of the largest and smallest singular values of the Gaussian ensemble (e.g., \cite{vershynin2010introduction}). See Section \ref{sec:proof-main} for the complete proof. 
}
\end{remark}
Now, we are in position to finish the proof of Theorem \ref{th:wishart-Gaussian}.
\begin{proof}[Proof of Theorem \ref{th:wishart-Gaussian}]
	Without loss of generality, we assume $\sigma_\ast^2 = \max_{ij}\sigma_{ij}^2 = 1$.  Let $m_1 = \lceil\sigma_C^2\rceil + 2q-1, m_2 = \lceil\sigma_R^2\rceil + 2q-1$ for some $q$ to be specified later and $H$ be an $m_1$-by-$m_2$ random matrix with i.i.d. standard Gaussian entries. Lemmas \ref{lm:Gaussian-comparison} and \ref{lm:iid-Gaussian-moment} imply
\begin{equation}\label{ineq:Gaussian-ZZ-bound-1}
\begin{split}
& \mathbb{E}\|ZZ^\top - \mathbb{E}ZZ^\top\| \leq  \left(\mathbb{E}\tr\left\{\left(ZZ^\top - \mathbb{E}ZZ^\top\right)^{2q}\right\}\right)^{1/2q} \\
\overset{\text{Lemma \ref{lm:Gaussian-comparison}}}{\leq} & \left\{\left(\frac{p_1}{m_1} \wedge \frac{p_2}{m_2}\right) \cdot  \mathbb{E}\tr\left\{HH^\top-\mathbb{E}HH^\top\right\}^{2q}\right\}^{1/{2q}}\\
\overset{\text{Lemma \ref{lm:iid-Gaussian-moment}}}{\leq} & 2^{1/2q}\left(\left(\frac{p_1}{m_1}\wedge \frac{p_2}{m_2}\right)m_1\wedge m_2\right)^{1/2q} \left(2\sqrt{m_1m_2} + m_1 + 4(\sqrt{m_1}+\sqrt{m_2})\sqrt{2q} + 4q\right)\\
\leq & 2^{1/2q} \left(p_1 \wedge p_2\right)^{1/2q}\left(2\sigma_C\sigma_R + \sigma_C^2 + 10\sigma_C\sqrt{q} + 10\sigma_R\sqrt{q} + 24q\right).
\end{split}
\end{equation}
Let $q = K\lceil \log(p_1\wedge p_2)\rceil$ for $K = \lceil \frac{1}{\log(1+\varepsilon_1)}\rceil$, then we have
\begin{equation}\label{ineq:Gaussian-ZZ-bound-2}
\begin{split}
& \mathbb{E}\|ZZ^\top - \mathbb{E}ZZ^\top\|\\
\leq & 2^{1/2K} \left(e^{q/K}\right)^{1/2q}\left(2\sigma_C\sigma_R + \sigma_C^2 + 10\sigma_C\sqrt{q} + 10\sigma_R\sqrt{q} + 24q\right)\\
\leq & (2e)^{1/2K}\left(2\sigma_C\sigma_R + (1+\epsilon_2)\sigma_C^2 + 10\sqrt{K}\sigma_R\sqrt{\log(p_1\wedge p_2)} + \left(\frac{25}{\epsilon_2} + 24\right)K\log(p_1\wedge p_2)\right) \\
\leq & 2(1+\epsilon_1)\sigma_C\sigma_R + (1+\epsilon_1)(1+\epsilon_2)\sigma_C^2 + C_1(\epsilon_1)\sigma_R\sqrt{\log(p_1 \wedge p_2)} + C_2(\epsilon_1,\epsilon_2)\log(p_1 \wedge p_2).
\end{split}
\end{equation}
Here, 
\begin{equation*}
	\begin{split}
		C_1(\epsilon_1) & = 10(1+\varepsilon_1)\sqrt{\lceil 1/\log(1+\varepsilon_1)\rceil}, \\
		C_2(\epsilon_1, \epsilon_2) & = (1+\varepsilon_1)\lceil 1/\log(1+\varepsilon_1)\rceil\left(\frac{25}{\epsilon_2} + 24\right).
	\end{split}
\end{equation*}
\end{proof}

\subsection{Lower bounds}\label{sec:lower-bound}

To show the tightness of the upper bound given earlier, we also develop the following minimax lower bound for the heteroskedastic Wishart-type concentration. 
\begin{theorem}[Lower bound of heteroskedastic Wishart-type concentration]\label{th:hetero-wishart-lower}
	Suppose $p_1, p_2\geq 4$. Consider the following set of $p_1$-by-$p_2$ random matrices,
	\begin{equation*}
	\mathcal{F}_p(\sigma_\ast, \sigma_C, \sigma_R) = \left\{Z\in \mathbb{R}^{p_1\times p_2}: \begin{array}{ll}
	& Z_{ij}\overset{ind}{\sim} N(0, \sigma_{ij}^2), p =p_1\wedge p_2, \max_{i,j}\sigma_{ij}\leq \sigma_\ast, \\
	&\max_i \sum_{j=1}^{p_2} \sigma_{ij}^2 \leq \sigma_R^2, \max_j\sum_{i=1}^{p_1}\sigma_{ij}^2 \leq \sigma_C^2
	\end{array}\right\}.
	\end{equation*}
	For any $(\sigma_\ast, \sigma_R, \sigma_C)$ tuple satisfying $\min\{\sigma_C, \sigma_R\} \geq \sigma_{\ast}\geq \max\{\sigma_C/\sqrt{p_1}, \sigma_R/\sqrt{p_2}\}$, there exists a random Gaussian matrix $Z \in \mathcal{F}_{p}(\sigma_{\ast}, \sigma_R, \sigma_C)$ such that 
	\begin{equation}
	\mathbb{E}\|ZZ^\top - \mathbb{E}ZZ^\top\| \gtrsim \sigma_C^2 + \sigma_C\sigma_R + \sigma_R\sigma_\ast\sqrt{\log p} + \sigma_\ast^2\log p.
	\end{equation}
\end{theorem}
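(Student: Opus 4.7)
The claimed bound is a sum of four non-negative terms, so by the trivial inequality $a_1+\cdots+a_4\le 4\max_k a_k$ it suffices to produce, for each summand $T$ individually, a single random Gaussian matrix $Z\in\mathcal{F}_p(\sigma_\ast,\sigma_R,\sigma_C)$ with $\bbE\|ZZ^\top-\bbE ZZ^\top\|\gtrsim T$; the maximum of the four constructions then witnesses the whole sum up to an absolute constant. For the pair $\sigma_C^2+\sigma_C\sigma_R$ I would use a homoskedastic block: set $n=\lfloor\sigma_C^2/\sigma_\ast^2\rfloor$ and $m=\lfloor\sigma_R^2/\sigma_\ast^2\rfloor$, which are at most $p_1$ and $p_2$ respectively by the hypotheses $\sigma_\ast\ge\sigma_C/\sqrt{p_1}$ and $\sigma_\ast\ge\sigma_R/\sqrt{p_2}$, and let $Z$ have i.i.d.\ $N(0,\sigma_\ast^2)$ entries on the $[n]\times[m]$ sub-block and zero elsewhere. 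Then $Z\in\mathcal{F}_p$ is immediate, and writing $Z=\sigma_\ast H$ for a standard Gaussian $H\in\bbR^{n\times m}$ reduces to the classical Wishart estimate $\bbE\|HH^\top-mI_n\|\gtrsim n+\sqrt{nm}$ of Davidson--Szarek / Bai--Yin, which after multiplication by $\sigma_\ast^2$ yields the desired $\sigma_C^2+\sigma_C\sigma_R$.

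For the $\sigma_\ast^2\log p$ term I would take $Z$ diagonal with $Z_{ii}\sim N(0,\sigma_\ast^2)$ i.i.d.\ for $i\le p:=p_1\wedge p_2$, which lies in $\mathcal{F}_p$ because $\sigma_\ast\le\sigma_C\wedge\sigma_R$. Then $ZZ^\top-\bbE ZZ^\top$ is diagonal with i.i.d.\ entries $\sigma_\ast^2(g_i^2-1)$, $g_i\sim N(0,1)$, so its spectral norm equals $\sigma_\ast^2\max_{i\le p}|g_i^2-1|$, whose expectation is of order $\sigma_\ast^2\log p$ by the standard extreme-value estimate for $\chi^2_1$.

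The remaining term $\sigma_R\sigma_\ast\sqrt{\log p}$ is the delicate one. My plan is to reuse the block $Z$ from the first step and lower-bound the spectral norm by a single off-diagonal entry, $\|ZZ^\top-\bbE ZZ^\top\|\ge\max_{2\le j\le n}|(ZZ^\top)_{1,j}|$. Conditional on the first row $Z_{1,\cdot}$, the inner products $(ZZ^\top)_{1,j}=\langle Z_{1,\cdot},Z_{j,\cdot}\rangle$ for $j=2,\ldots,n$ are i.i.d.\ centered Gaussians with variance $\|Z_{1,\cdot}\|_2^2\sigma_\ast^2$, which concentrates around $m\sigma_\ast^4=\sigma_R^2\sigma_\ast^2$ on a high-probability event; the Gaussian maximum inequality then gives $\bbE\max_{j\ne1}|(ZZ^\top)_{1,j}|\gtrsim\sigma_R\sigma_\ast\sqrt{\log n}$. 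The main obstacle will be promoting $\sqrt{\log n}$ to $\sqrt{\log p}$: this is automatic whenever $n=\lfloor\sigma_C^2/\sigma_\ast^2\rfloor$ is at least a fixed positive power of $p$. When it is not, one verifies either that $\sigma_R\sigma_\ast\sqrt{\log p}$ is already dominated by $\sigma_C\sigma_R$ or by $\sigma_\ast^2\log p$ (and is thus handled by one of the earlier constructions), or one passes to a variance-rearranged block with a larger number $n'\le p_1$ of rows and per-entry variance $\sigma_C^2/n'$, adjusting the number of active columns correspondingly, and then optimizes over $n'$. Carrying out this regime-dependent case analysis is the principal technical content of the proof.
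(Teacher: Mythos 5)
Your reduction to handling each of the four summands separately is sound, and two of your three constructions work and closely parallel the paper's. The homoskedastic corner block for $\sigma_C^2+\sigma_C\sigma_R$ is essentially what the paper does (the paper splits this further: a single non-zero column for $\sigma_C^2$ and a Tropp lower bound on the block for $\sigma_C\sigma_R$, but your version via a classical non-asymptotic Wishart lower bound is fine). Your diagonal matrix for the $\sigma_\ast^2\log p$ term is a clean, valid alternative: $\|ZZ^\top-\bbE ZZ^\top\|=\sigma_\ast^2\max_{i\le p}|g_i^2-1|$ with $g_i$ i.i.d.\ standard Gaussian, whose expectation is indeed $\asymp\sigma_\ast^2\log p$.

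The $\sigma_R\sigma_\ast\sqrt{\log p}$ term has a genuine gap, and the fallback you sketch does not close it. A single $n\times m$ block (with $n=\lfloor\sigma_C^2/\sigma_\ast^2\rfloor$, $m=\lfloor\sigma_R^2/\sigma_\ast^2\rfloor$) provides only $n$ rows, so both the max over off-diagonal entries of $ZZ^\top$ and the max over diagonal entries can only give a $\sqrt{\log n}$ factor, and $n$ can be as small as $1$ (e.g.\ $\sigma_C=\sigma_\ast$). Your proposed ``variance-rearranged block'' does not escape this: with $n'$ rows of per-entry variance $\sigma_C^2/n'$ and $m'\le n'\sigma_R^2/\sigma_C^2$ active columns (forced by the row-sum budget), a conditional off-diagonal entry has standard deviation $\asymp\sqrt{m'}\,\sigma_C^2/n'\le\sigma_C\sigma_R/\sqrt{n'}$, so the max is $\lesssim\sigma_C\sigma_R\sqrt{\log n'}/\sqrt{n'}$, which is maximized at the minimal admissible $n'\asymp\sigma_C^2/\sigma_\ast^2=n$ — you are back to $\sqrt{\log n}$. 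The ``dominated by another term'' escape route also fails in the regime $\sigma_C<\sigma_\ast\sqrt{\log p}$ and $\sigma_R>\sigma_\ast\sqrt{\log p}$ (i.e.\ $k_1:=\lfloor\sigma_C^2/\sigma_\ast^2\rfloor<\log p$ and $k_2:=\lfloor\sigma_R^2/\sigma_\ast^2\rfloor>\log p$), where $\sigma_R\sigma_\ast\sqrt{\log p}$ is not absorbed by $\sigma_C\sigma_R$, $\sigma_C^2$, or $\sigma_\ast^2\log p$, and $\sqrt{\log n}\ll\sqrt{\log p}$.

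The idea you are missing — and the one the paper uses — is a \emph{block-diagonal} construction. Set $k_1,k_2$ as above and tile $m_0=\lfloor(p_1/k_1)\wedge(p_2/k_2)\rfloor$ disjoint $k_1\times k_2$ blocks of i.i.d.\ $N(0,\sigma_\ast^2)$ entries along the diagonal of the $p_1\times p_2$ variance profile. Each of the $k_1m_0$ non-zero rows of $Z$ is an independent $k_2$-dimensional Gaussian vector $\beta_j$, and $\|ZZ^\top-\bbE ZZ^\top\|\ge\max_j|\beta_j^\top\beta_j-k_2\sigma_\ast^2|$. After disposing of the trivial case $k_2\ge(\log p)^2$, one has $k_1m_0\gtrsim p/(\log p)^2$, so a lower tail bound for $\chi^2_{k_2}$ plus the union over $j$ yields $\gtrsim\sigma_\ast^2(\sqrt{k_2\log(k_1m_0)}+\log(k_1m_0))\gtrsim\sigma_R\sigma_\ast\sqrt{\log p}+\sigma_\ast^2\log p$. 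Without tiling — i.e.\ with a single block — you have at most $n\le p_1$ independent rows, but the constraints on $\sigma_C,\sigma_\ast$ can force $n=O(1)$, so there is no way to collect $\Omega(p^c)$ independent observables and the $\sqrt{\log p}$ factor cannot be recovered.
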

The proof of Theorem \ref{th:hetero-wishart-lower} is given in Section \ref{sec:proof-main}. 
\begin{remark}{\rm 
	Theorems \ref{th:wishart-Gaussian} and \ref{th:hetero-wishart-lower} together establish the minimax optimal rate of $\mathbb{E}\|ZZ^\top - \mathbb{E}ZZ^\top\|$ in the class of $\mathcal{F}_p(\sigma_\ast, \sigma_C, \sigma_R)$. In other words, Theorem \ref{th:hetero-wishart-lower} shows that \eqref{ineq:th1-bound} yields the best upper bound for heteroskedastic Wishart-type concentration among all the bounds characterized by $\sigma_C, \sigma_R, \sigma_*$. We shall point out that the upper bound of Theorem \ref{th:wishart-Gaussian} may not be tight for some specific values of $\{\sigma_{ij}^2\}$. For example, in Sections \ref{sec:homoskedastic rows} and \ref{sec:homoskedastic columns}, we develop sharper bounds via a more refined analysis when the Wishart matrix has near-homoskedastic rows or columns. 

	Generally speaking, it remains an open problem to develop a heteroskedastic Wishart-type concentration inequality that is tight for all specific values of $\{\sigma_{ij}^2\}$. We leave this problem as future work.
}\end{remark}

\section{Extensions}\label{sec:extensions}

We consider several extensions of Theorem \ref{th:wishart-Gaussian} in this section. 

\subsection{Wishart-type concentration of non-Gaussian random matrices}\label{sec:non-Gaussian}

In this section, we generalize the developed concentration inequality for heteroskedastic Wishart matrices with more general entrywise distributions, such as sub-Gaussian, sub-exponential, heavy tailed, and bounded distributions. We first introduce the following lemma as a sub-Gaussian analog of Lemma \ref{lm:Gaussian-comparison}.
\begin{lemma}[Sub-Gaussian comparison]\label{lm:subGaussian-comparison}
	Suppose $Z\in \mathbb{R}^{p_1\times p_2}$ has independent mean-zero symmetric sub-Gaussian entries:
\begin{equation}\label{eq:cond-subGaussian}
	\mathbb{E}Z_{ij}=0,\quad \Var(Z_{ij}) =  \sigma_{ij}^2, \quad \|Z_{ij}/\sigma_{ij}\|_{\psi_2} \leq \kappa.
\end{equation}	
	$M\in \mathbb{R}^{m_1\times m_2}$ has i.i.d. standard Gaussian entries. When $q\geq 1$, $m_1 = \lceil\sigma_C^2\rceil+q-1$, $m_2 = \lceil\sigma_R^2\rceil+q-1$, we have
	\begin{equation}\label{ineq:ZZHH-subGaussian}
	\mathbb{E}\tr\left\{(ZZ^\top - \mathbb{E}ZZ^\top)^q\right\} \leq (C\kappa)^{2q} \left(\frac{p_1}{m_1}\wedge \frac{p_2}{m_2}\right)\mathbb{E}\tr\left\{(HH^\top - \mathbb{E}HH^\top)^q\right\}.
	\end{equation} 
\end{lemma}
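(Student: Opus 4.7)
The plan is to mimic the cycle-expansion proof of Lemma \ref{lm:Gaussian-comparison}, but to insert an additional per-edge moment-comparison that converts sub-Gaussian expectations of $\eta_{ij}=Z_{ij}/\sigma_{ij}$ into Gaussian expectations at the cost of a $(C\kappa)^{\alpha+2\beta}$ factor on each factor of the trace expansion. Because each cycle $\mathbf{c}$ contributes $2q$ total factors of $\eta$, these local constants will combine into the single global factor $(C\kappa)^{2q}$ appearing in \eqref{ineq:ZZHH-subGaussian}.

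First, I would standardize by writing $Z_{ij}=\sigma_{ij}\eta_{ij}$, where $\eta_{ij}$ are independent, symmetric, mean-zero, variance one, with $\|\eta_{ij}\|_{\psi_2}\leq\kappa$. The combinatorial identity
\begin{equation*}
\mathbb{E}\tr\bigl\{(ZZ^\top-\mathbb{E}ZZ^\top)^q\bigr\}=\sum_{\mathbf{c}\in([p_1]\times[p_2])^q}\prod_{k=1}^{q}\sigma_{u_k v_k}\sigma_{u_{k+1}v_k}\prod_{(i,j)}\mathbb{E}\eta_{ij}^{\alpha_{ij}(\mathbf{c})}(\eta_{ij}^2-1)^{\beta_{ij}(\mathbf{c})}
\end{equation*}
underlying \eqref{eq:main-sketch-1} uses only independence and the variance normalization $\mathbb{E}\eta_{ij}^2=1$, so it carries over verbatim. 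Symmetry of $\eta_{ij}$ kills every factor with odd $\alpha_{ij}(\mathbf{c})$, and variance matching kills the factor with $(\alpha_{ij},\beta_{ij})=(0,1)$. These are exactly the factors that also vanish in the Gaussian expansion; hence the surviving cycles are identical in the two settings.

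Next, the heart of the argument is the per-edge bound: for every even $\alpha=2\gamma$ and every $\beta\geq 0$ with $(\gamma,\beta)\neq(0,1)$,
\begin{equation*}
\bigl|\mathbb{E}\eta^{2\gamma}(\eta^2-1)^\beta\bigr|\leq (C\kappa)^{2\gamma+2\beta}\,\mathbb{E}G^{2\gamma}(G^2-1)^\beta,\qquad G\sim N(0,1).
\end{equation*}
The plan for this step is to expand $(\eta^2-1)^\beta=\sum_{k=0}^{\beta}\binom{\beta}{k}(-1)^{\beta-k}\eta^{2k}$, apply the triangle inequality, and invoke the standard sub-Gaussian moment bound $\mathbb{E}\eta^{2m}\leq (C\kappa)^{2m}(2m-1)!!$. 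This produces an upper bound proportional to $(C\kappa)^{2\gamma+2\beta}\mathbb{E}G^{2\gamma}(G^2+1)^\beta$; a Stirling-based calculation using the double factorial identity $\mathbb{E}G^{2m}=(2m-1)!!$ then shows that $\mathbb{E}G^{2\gamma}(G^2+1)^\beta$ is dominated by $C_0^{2\gamma+2\beta}\mathbb{E}G^{2\gamma}(G^2-1)^\beta$ in the non-vanishing regime, which can be absorbed into the $(C\kappa)^{2\gamma+2\beta}$ constant.

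Finally, I would substitute the per-edge bound into the expansion and use the identity $\sum_{(i,j)}(\alpha_{ij}(\mathbf{c})+2\beta_{ij}(\mathbf{c}))=2q$ for every cycle $\mathbf{c}$. This factors out the global constant $(C\kappa)^{2q}$, after which the remaining sum is precisely $\mathbb{E}\tr\{(\tilde Z\tilde Z^\top-\mathbb{E}\tilde Z\tilde Z^\top)^q\}$ with $\tilde Z_{ij}\sim N(0,\sigma_{ij}^2)$; applying Lemma \ref{lm:Gaussian-comparison} to $\tilde Z$ yields the claimed inequality \eqref{ineq:ZZHH-subGaussian}. The main obstacle is the per-edge comparison in Step~3: because the Gaussian moment $\mathbb{E}G^{2\gamma}(G^2-1)^\beta$ can be much smaller than $\mathbb{E}G^{2\gamma+2\beta}$ due to sign cancellation in the expansion of $(G^2-1)^\beta$, one needs a careful lower bound on this moment in the non-vanishing regime to ensure that replacing the absolute sub-Gaussian moment by the signed Gaussian moment only costs a factor that can be absorbed into $(C\kappa)^{2\gamma+2\beta}$; the degenerate case $(\gamma,\beta)=(0,1)$ must be handled separately by the variance-matching identity.
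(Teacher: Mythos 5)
Your proposal matches the paper's own argument: the same cycle expansion as in Lemma~\ref{lm:Gaussian-comparison}, a per-edge comparison bounding $|\mathbb{E}\eta^{\alpha}(\eta^2-1)^{\beta}|$ by $(C\kappa)^{\alpha+2\beta}\mathbb{E}G^{\alpha}(G^2-1)^{\beta}$ (the paper's Lemma~\ref{lm:Gaussian-moments}), and then using $\sum_{(i,j)}(\alpha_{ij}+2\beta_{ij})=2q$ to extract the global $(C\kappa)^{2q}$ before reapplying the Gaussian comparison. The only cosmetic difference is in how you establish the per-edge bound --- you expand $(\eta^2-1)^{\beta}$ binomially and compare to $\mathbb{E}G^{2\gamma}(G^2+1)^{\beta}$, whereas the paper bounds $|\eta^2-1|\le\eta^2\vee 1$ to reach $\mathbb{E}|\eta|^{\alpha+2\beta}+1$ and then invokes the double-factorial lower bound on $\mathbb{E}G^{\alpha}(G^2-1)^{\beta}$; both routes are valid.
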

The proof of Lemma \ref{lm:subGaussian-comparison} is deferred to Section \ref{sec:proof-nonGaussian}. 

\begin{remark}[Proof ideas of Lemma \ref{lm:subGaussian-comparison}] {\rm
Compared to the proof of Lemma \ref{lm:Gaussian-comparison}, the proof of Lemma \ref{lm:subGaussian-comparison} requires more delicate scheme to bound $\bbE G_{ij}^{\alpha_{ij}(\mathbf{c})}(G_{ij}^2-1)^{\beta_{ij}(\mathbf c)}$ for non-standard-Gaussian distributed $G_{ij}:= Z_{ij}/\sigma_{ij}$. To this end, we introduce Lemma \ref{lm:Gaussian-moments} to bound $\bbE G_{ij}^{\alpha_{ij}(\mathbf{c})}(G_{ij}^2-1)^{\beta_{ij}(\mathbf c)}$ by a Gaussian analog:
\begin{equation*}
	\bbE G_{ij}^{\alpha_{ij}(\mathbf{c})}(G_{ij}^2-1)^{\beta_{ij}(\mathbf c)} \leq (C\kappa)^{2q}\bbE G^{\alpha_{ij}(\mathbf{c})}(G^2-1)^{\beta_{ij}(\mathbf c)}, \quad G\sim N(0, 1).
\end{equation*}
}\end{remark}

As a consequence of Lemma \ref{lm:subGaussian-comparison}, we have the following Wishart-type Concentration of sub-Gaussian random matrix.
\begin{corollary}[Wishart-type concentration of sub-Gaussian random matrix]\label{cr:heteroskedastic-wishart-concentration}
	Suppose $Z \in \bbR^{p_1 \times p_2}$ has independent mean-zero sub-Gaussian entries that satisfy \eqref{eq:cond-subGaussian}. Then
	\begin{equation}\label{ineq:EZZ^top}
	\begin{split}
	\mathbb{E}\left\|ZZ^\top - \mathbb{E} ZZ^\top\right\| \lesssim & \kappa^2\left( \sigma_C\sigma_R+\sigma_C^2 + \sigma_R\sigma_\ast\sqrt{\log(p_1\wedge p_2)} + \sigma_\ast^2\log(p_1\wedge p_2)\right).
	\end{split}
	\end{equation}
\end{corollary}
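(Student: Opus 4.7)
The plan is to mirror the proof of Theorem \ref{th:wishart-Gaussian} verbatim, swapping Lemma \ref{lm:Gaussian-comparison} for its sub-Gaussian analog Lemma \ref{lm:subGaussian-comparison}. Without loss of generality normalize $\sigma_\ast = 1$. Since $A := ZZ^\top - \mathbb{E}ZZ^\top$ is symmetric and $2q$ is even,
\begin{equation*}
\mathbb{E}\|A\| \leq \left(\mathbb{E}\tr(A^{2q})\right)^{1/(2q)}.
\end{equation*}
Pick $m_1 = \lceil\sigma_C^2\rceil + 2q - 1$ and $m_2 = \lceil\sigma_R^2\rceil + 2q - 1$ and let $H \in \mathbb{R}^{m_1 \times m_2}$ have i.i.d.\ $N(0,1)$ entries. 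Lemma \ref{lm:subGaussian-comparison} applied with order $2q$ yields
\begin{equation*}
\mathbb{E}\tr(A^{2q}) \leq (C\kappa)^{4q}\left(\frac{p_1}{m_1} \wedge \frac{p_2}{m_2}\right)\mathbb{E}\tr\left\{(HH^\top - \mathbb{E}HH^\top)^{2q}\right\}.
\end{equation*}

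Next I invoke Lemma \ref{lm:iid-Gaussian-moment} to bound the standard-Wishart moment and take the $(2q)$-th root, producing a $(C\kappa)^2$ prefactor from the comparison step:
\begin{equation*}
\mathbb{E}\|A\| \leq (C\kappa)^2 \cdot 2^{1/(2q)}(p_1\wedge p_2)^{1/(2q)}\left(2\sqrt{m_1 m_2} + m_1 + 4(\sqrt{m_1}+\sqrt{m_2})\sqrt{2q} + 4q\right).
\end{equation*}
Then I choose $q \asymp \log(p_1\wedge p_2)$, which makes $(p_1\wedge p_2)^{1/(2q)}$ a universal constant; substituting $m_1,m_2$ and using $\sqrt{m_1 m_2} \leq \sigma_C\sigma_R + O(\sqrt{q}(\sigma_C+\sigma_R)) + O(q)$ produces
\begin{equation*}
\mathbb{E}\|A\| \lesssim \kappa^2\left(\sigma_C\sigma_R + \sigma_C^2 + \sigma_R\sqrt{\log(p_1\wedge p_2)} + \log(p_1\wedge p_2)\right),
\end{equation*}
and reinserting $\sigma_\ast$ by homogeneity yields the stated bound. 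The absolute constants hidden in $\lesssim$ are weaker than those in Theorem \ref{th:wishart-Gaussian} since the sub-Gaussian comparison loses a factor $(C\kappa)^{4q}$.

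The only genuinely non-routine point is that Lemma \ref{lm:subGaussian-comparison} is stated for \emph{symmetric} sub-Gaussian entries while the corollary allows arbitrary mean-zero sub-Gaussian entries; this is the step I expect to be the main obstacle. My plan is to handle this by a standard decoupling/symmetrization: let $Z'$ be an independent copy of $Z$ and observe that by Jensen applied to $\mathbb{E}Z'(Z')^\top$,
\begin{equation*}
\mathbb{E}\|ZZ^\top - \mathbb{E}ZZ^\top\| \leq \mathbb{E}\|ZZ^\top - Z'(Z')^\top\|.
\end{equation*}
Writing $2(ZZ^\top - Z'(Z')^\top) = (Z-Z')(Z+Z')^\top + (Z+Z')(Z-Z')^\top$ and applying the triangle and Cauchy--Schwarz inequalities reduces the problem to bounding $\mathbb{E}\|\tilde Z \tilde Z^\top - \mathbb{E}\tilde Z \tilde Z^\top\|$ where $\tilde Z = (Z-Z')/\sqrt{2}$ has independent symmetric sub-Gaussian entries with variance parameters $\sigma_{ij}^2$ and the same $\psi_2$-bound up to an absolute constant; Lemma \ref{lm:subGaussian-comparison} then applies directly to $\tilde Z$, and the above moment-method argument goes through unchanged (modulo absorbing another universal constant into $\kappa^2$).
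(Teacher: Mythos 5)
Your plan for the symmetric case is exactly what the paper does: moment method $\mathbb{E}\|A\|\leq(\mathbb{E}\tr A^{2q})^{1/(2q)}$, then Lemma~\ref{lm:subGaussian-comparison} to compare with the homoskedastic Wishart matrix, then Lemma~\ref{lm:iid-Gaussian-moment} and the choice $q\asymp\log(p_1\wedge p_2)$; the $(C\kappa)^{4q}$ factor contributes $(C\kappa)^2$ after taking the $(2q)$-th root. That portion is correct.

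The symmetrization step, which you correctly flag as the main obstacle, is where the plan breaks. Starting from $\mathbb{E}\|ZZ^\top-Z'(Z')^\top\|$, the identity $2(ZZ^\top-Z'(Z')^\top)=(Z-Z')(Z+Z')^\top+(Z+Z')(Z-Z')^\top$ together with submultiplicativity gives $\|ZZ^\top-Z'(Z')^\top\|\le\|Z-Z'\|\cdot\|Z+Z'\|$. This does not reduce to $\|\tilde Z\tilde Z^\top-\mathbb{E}\tilde Z\tilde Z^\top\|$: the AM--GM step yields $\tfrac12\|(Z-Z')(Z-Z')^\top\|+\tfrac12\|(Z+Z')(Z+Z')^\top\|$, and converting either uncentered Gram matrix into its centered version costs an additive $\|\mathbb{E}(Z\pm Z')(Z\pm Z')^\top\|\asymp\sigma_R^2$. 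More fundamentally, both $\|Z-Z'\|$ and $\|Z+Z'\|$ concentrate at $\sigma_C+\sigma_R+\sigma_*\sqrt{\log(p_1\wedge p_2)}$, so the product is of order $(\sigma_C+\sigma_R+\sigma_*\sqrt{\log})^2$, which carries a $\sigma_R^2$ term absent from the target $\sigma_C\sigma_R+\sigma_C^2+\sigma_R\sigma_*\sqrt{\log}+\sigma_*^2\log$. This is precisely the non-tight bound \eqref{ineq:bound-symmetry} from the introduction, which the paper explicitly says it improves on; in regimes like $p_1\ll p_2$ (e.g.\ $p_1=1$, homoskedastic) the $\sigma_R^2$ term dominates, so the Cauchy--Schwarz route provably cannot recover \eqref{ineq:EZZ^top}. (A secondary issue: $Z+Z'$ is not symmetrically distributed, so even the reduction to Lemma~\ref{lm:subGaussian-comparison} would not apply to that factor.)

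The paper avoids this entirely by never decoupling the two factors. Instead it rewrites $ZZ^\top-\mathbb{E}ZZ^\top$, using $\mathbb{E}'Z'=0$ and $\mathbb{E}'Z'(Z')^\top=\mathbb{E}ZZ^\top$, as
\begin{equation*}
ZZ^\top-\mathbb{E}ZZ^\top = \mathbb{E}'\left[ZZ^\top+Z'(Z')^\top-Z(Z')^\top-Z'Z^\top-2\mathbb{E}ZZ^\top\,\Big|\,Z\right],
\end{equation*}
and the bracket is exactly $(Z-Z')(Z-Z')^\top-\mathbb{E}(Z-Z')(Z-Z')^\top$. Conditional Jensen then gives $\mathbb{E}\|ZZ^\top-\mathbb{E}ZZ^\top\|\le\mathbb{E}\|(Z-Z')(Z-Z')^\top-\mathbb{E}(Z-Z')(Z-Z')^\top\|$ directly, with $Z-Z'$ symmetric sub-Gaussian and the same $\sigma_C,\sigma_R,\sigma_*$ up to a factor of $\sqrt2$. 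This preserves the centered Wishart form and the asymmetric role of $\sigma_C$ versus $\sigma_R$, which is what makes the sharp bound possible. You should replace your decoupling/Cauchy--Schwarz step with this conditional-Jensen identity.
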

\begin{proof}[Proof of Corollary \ref{cr:heteroskedastic-wishart-concentration}]
When all $Z_{ij}$'s are symmetrically distributed, Corollary \ref{cr:heteroskedastic-wishart-concentration} follows from the proof of Theorem \ref{th:wishart-Gaussian} along with Lemmas \ref{lm:iid-Gaussian-moment} and \ref{lm:subGaussian-comparison}. If $Z_{ij}$'s are not all symmetric, let $Z'$ be an independent copy of $Z$, then each entry of $Z-Z'$ has independent symmetric sub-Gaussian distribution. By Jensen's inequality, we have \begin{equation*}
	\begin{split}
		\bbE \left\|ZZ^\top - \bbE ZZ^\top\right\| &= \bbE\left\|ZZ^\top + \bbE'  Z'(Z')^\top - Z(\bbE' Z')^\top - (\bbE' Z')Z^\top - 2\bbE ZZ^\top\right\|  \\
		& = \bbE\left\|\bbE\left\{ZZ^\top + Z'(Z')^\top - Z(Z')^\top - (Z') Z^\top - 2\bbE ZZ^\top\Big| Z\right\}\right\|  \\
		& \leq  \bbE\left[\bbE \left\{\left\|ZZ^\top + Z'(Z')^\top - Z(Z')^\top - (Z')Z^\top - 2\bbE ZZ^\top \right\|\Big| Z \right\} \right] \\
		& = \bbE\left[\bbE' \left\|(Z-Z')(Z-Z')^\top - \bbE(Z-Z')(Z-Z')^\top\right\| \right]  \\
		& \lesssim \kappa^2\left( \sigma_C\sigma_R+\sigma_C^2 + \sigma_R\sigma_\ast\sqrt{\log(p_1\wedge p_2)} + \sigma_\ast^2\log(p_1\wedge p_2)\right).
	\end{split}
\end{equation*}
\end{proof}
Next, we turn to the Wishart-type concentration for random matrix $Z$ with heavy-tailed entries. 
\begin{theorem}[Wishart-type concentration for heavy-tailed random matrix]\label{th:heavier-tail-wishart}
Suppose $\alpha \leq 1$, $Z\in \mathbb{R}^{p_1\times p_2}$ has independent entries, $\Var(Z_{ij}) \leq \sigma_{ij}^2$, and $\|Z_{ij}/\sigma_{ij}\|_{\psi_\alpha} \leq \kappa$ for all $i, j$. Given $\sigma_C, \sigma_R$, and $\sigma_{\ast}$ defined in \eqref{eq:sigma_C-sigma_R-sigma_ast}, we have
	\begin{equation*}
	\mathbb{E}\left\|ZZ^\top - \mathbb{E} ZZ^\top \right\| \lesssim \left(\sigma_C + \sigma_R + \sigma_{\ast}(\log (p_1\wedge p_2))^{1/2}(\log (p_1\vee p_2))^{1/\alpha - 1/2}\right)^2 - \sigma_R^2.
	\end{equation*}
\end{theorem}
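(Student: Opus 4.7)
The target bound is the sub-Weibull analog of the sub-Gaussian estimate \eqref{ineq:bound-ours}, with only the $\sigma_\ast\sqrt{\log(p_1\wedge p_2)}$ factor inflated to $\sigma_\ast(\log(p_1\wedge p_2))^{1/2}(\log(p_1\vee p_2))^{1/\alpha-1/2}$. The extra factor $(\log(p_1\vee p_2))^{1/\alpha-1/2}$ is the natural price for passing from sub-Weibull to bounded entries via a union bound over all $p_1 p_2$ positions of $Z$. My plan is therefore a truncation argument followed by a Bernstein-type moment method patterned on the proof of Theorem \ref{th:wishart-Gaussian}.

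\textbf{Step 1 (Truncation).} Fix $\tau:=C_0\kappa\sigma_\ast(\log(p_1\vee p_2))^{1/\alpha}$ for a sufficiently large constant $C_0$ and write $Z=Y+W$ with $Y_{ij}:=Z_{ij}\mathbf{1}_{\{|Z_{ij}|\leq\tau\}}-\bbE[Z_{ij}\mathbf{1}_{\{|Z_{ij}|\leq\tau\}}]$ and $W_{ij}:=Z_{ij}-Y_{ij}$. The sub-Weibull tail $\Prob(|Z_{ij}|>t)\leq 2\exp(-(t/(\kappa\sigma_{ij}))^\alpha)$ together with a union bound over all $p_1p_2$ entries yields $\Prob(W\equiv 0)\geq 1-(p_1\vee p_2)^{-10}$ for $C_0$ large. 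On the complementary event, Cauchy--Schwarz combined with any crude spectral-norm moment estimate (e.g.\ iterating the symmetrization bound \eqref{ineq:bound-symmetry} with sub-Weibull inputs) shows that the corresponding contribution to $\bbE\|ZZ^\top-\bbE ZZ^\top\|$ is polynomially small in $p_1\vee p_2$ and therefore absorbable into the main bound.

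\textbf{Step 2 (Bounded part).} Since $|Y_{ij}|\leq 2\tau$ a.s.\ and $\Var(Y_{ij})\leq\sigma_{ij}^2$, the Bernstein-type moment bound $\bbE|Y_{ij}|^{2k}\leq(2\tau)^{2k-2}\sigma_{ij}^2$ holds for every $k\geq 1$. I would revisit the cycle expansion behind Lemmas \ref{lm:Gaussian-comparison} and \ref{lm:subGaussian-comparison} for $\bbE\tr\{(YY^\top-\bbE YY^\top)^{2q}\}$, substituting this Bernstein-type bound for the (sub-)Gaussian moment edge-by-edge. The critical feature is that multiplicity-two edges contribute only the variance factors $\sigma_{ij}^2$, keeping the leading $\sigma_C\sigma_R+\sigma_C^2$ constants intact, while higher-multiplicity edges pick up powers of $\tau$ that collect into the $\sigma_\ast$-style terms of the bound. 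Combined with Lemma \ref{lm:iid-Gaussian-moment} and the choice $q\asymp\log(p_1\wedge p_2)$, careful tracking should yield an estimate of the form
\[
\bbE\|YY^\top-\bbE YY^\top\|\lesssim\sigma_C\sigma_R+\sigma_C^2+\sigma_R\tau_{\mathrm{eff}}\sqrt{\log(p_1\wedge p_2)}+\tau_{\mathrm{eff}}^2\log(p_1\wedge p_2),
\]
with an effective scale $\tau_{\mathrm{eff}}$ tuned so that, after substitution, the heavy-tail factor assumes precisely the form $\sigma_\ast(\log(p_1\wedge p_2))^{1/2}(\log(p_1\vee p_2))^{1/\alpha-1/2}$ and the whole right-hand side is packaged as $(\sigma_C+\sigma_R+\sigma_\ast\cdots)^2-\sigma_R^2$.

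\textbf{Main obstacle.} Step 2 is the crux. A naive invocation of Corollary \ref{cr:heteroskedastic-wishart-concentration} on $Y$ via the uniform normalized sub-Gaussian parameter $\tilde\kappa=\max_{ij}\|Y_{ij}\|_{\psi_2}/\sigma_{ij}\sim\tau/\min_{ij}\sigma_{ij}$ would inflate every term of the bound by $\tilde\kappa^2$ --- in particular destroying the leading $\sigma_C\sigma_R+\sigma_C^2$ constants --- and is therefore unacceptable. The cycle expansion of Lemma \ref{lm:Gaussian-comparison} must instead be reopened and the Bernstein moment bound applied edge-by-edge, so that multiplicity-two (dominant) cycles pay only variance while the heavy-tail $\tau$-factors are collected only on high-multiplicity cycles. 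Tuning the truncation level and the associated combinatorial accounting so that the factor $(\log(p_1\vee p_2))^{1/\alpha-1/2}$ appears exactly on the $\sigma_\ast$ terms (and not on the leading $\sigma_C\sigma_R+\sigma_C^2$ terms) is the principal technical difficulty.
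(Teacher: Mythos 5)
Your plan takes a genuinely different route from the paper, but as proposed it does not produce the stated bound --- there is a real gap, not just a "technical difficulty to be overcome by careful tracking."

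The paper never truncates. Instead it proves a \emph{Gaussianization} comparison (Lemma~\ref{lm:heavy-tail-comparison}): with $b=2/\alpha$, the normalized entries $E_{ij}=Z_{ij}/\sigma_{ij}$ satisfy
\[
\bbE E_{ij}^{\alpha'}(E_{ij}^2-1)^{\beta'} \leq (C_b\kappa)^{\alpha'+2\beta'}\,\bbE F_{ij}^{\alpha'}(F_{ij}^2-1)^{\beta'},\qquad F_{ij}:=G_{ij}|\tilde G_{ij}|^{b-1},
\]
with $G,\tilde G$ independent standard Gaussians. The trace expansion then reduces the problem to the surrogate matrix $\tilde Z_{ij}=\sigma_{ij}F_{ij}$, and --- conditionally on $\tilde G$ --- this surrogate is Gaussian with random variances $\sigma_{ij}^2|\tilde G_{ij}|^{2(b-1)}$. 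Theorem~\ref{th:wishart-Gaussian} is applied conditionally, and the $(\log(p_1\vee p_2))^{1/\alpha-1/2}$ factor arises precisely from $\|\tilde\sigma_*^2\|_{2q}$, i.e.\ from the expected maximum over all $p_1p_2$ entries of the $\chi$-like variable $|\tilde G_{ij}|^{2(b-1)}$. This max-over-entries mechanism has no analogue in your cycle-count accounting.

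The concrete obstruction to your truncation argument is a conflict between two requirements on the truncation level $\tau$. To make the leftover $W$ negligible via a union bound over $p_1p_2$ positions, you must take $\tau\gtrsim\sigma_*(\log(p_1\vee p_2))^{1/\alpha}$. But the bounded-matrix estimate you then feed $Y$ into (Theorem~\ref{th:heter-wishart-bounded}, or any re-derivation of it via the cycle expansion with the Bernstein bound $\bbE|Y_{ij}|^{2k}\leq(2\tau)^{2k-2}\sigma_{ij}^2$) yields
\[
\sigma_C\sigma_R + \sigma_C^2 + \tau\sigma_R\sqrt{\log(p_1\wedge p_2)} + \tau^2\log(p_1\wedge p_2),
\]
so the effective scale is $\tau$, not some tunable $\tau_{\mathrm{eff}}$. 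Substituting $\tau\asymp\sigma_*(\log(p_1\vee p_2))^{1/\alpha}$ produces $\sigma_R\sigma_*(\log(p_1\vee p_2))^{1/\alpha}\sqrt{\log(p_1\wedge p_2)}$ and $\sigma_*^2(\log(p_1\vee p_2))^{2/\alpha}\log(p_1\wedge p_2)$, which overshoot the target terms by $(\log(p_1\vee p_2))^{1/2}$ and $\log(p_1\vee p_2)$, respectively. Conversely, if you set $\tau\asymp\sigma_*(\log(p_1\vee p_2))^{1/\alpha-1/2}$ to hit the target, then for $\alpha\leq 1$ the tail $\exp(-(\tau/(\kappa\sigma_*))^\alpha)=\exp(-C(\log(p_1\vee p_2))^{1-\alpha/2})$ is too heavy to survive the union bound over $p_1p_2\asymp\exp(2\log(p_1\vee p_2))$ entries. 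The Bernstein edge-moment bound $(2\tau)^{2k-2}\sigma_{ij}^2$ is genuinely lossy at small multiplicities $k$ relative to the true sub-Weibull moments $\sigma_{ij}^{2k}\kappa^{2k}(2k)^{2k/\alpha}$, and "reopening the cycle expansion" with the former cannot recover the missing $\log(p_1\vee p_2)$ factors, because the $\log(p_1\vee p_2)$ in the true rate is an extreme-value statistic over $p_1p_2$ coordinates and the trace expansion only sees $p_1\wedge p_2$ and $q\asymp\log(p_1\wedge p_2)$. To fix your argument you would have to abandon the Bernstein substitute and carry the genuine sub-Weibull moment growth $(2k)^{2k/\alpha}$ edge-by-edge through the shape combinatorics; the paper's factorization $F_{ij}=G_{ij}|\tilde G_{ij}|^{b-1}$ is precisely a closed-form device for organizing that book-keeping and then conditioning it away.
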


In a variety of applications, the observations and random perturbations are naturally bounded (e.g., adjacency matrix in network analysis \cite{newman2013spectral} and single-nucleotide polymorphisms (SNPs) data in genomics \cite{syvanen2001accessing}). Thus, we provide a Wishart-type concentration for entrywise uniformly bounded random matrices as follows.
\begin{theorem}[Wishart-type concentration of bounded random Matrix]\label{th:heter-wishart-bounded}
	Suppose $Z\in \mathbb{R}^{p_1\times p_2}$, $\mathbb{E}Z_{ij} = 0, \Var(Z_{ij}) = \sigma_{ij}^2, |Z_{ij}|\leq B$ almost surely, then
	\begin{equation*}
	\begin{split}
	& \mathbb{E}\left\|ZZ^\top - \mathbb{E}ZZ^\top\right\|\\
	\leq & (1+\epsilon_1)\left\{2\sigma_C\sigma_R + (1+\epsilon_2)\sigma_C^2 + C_1(\epsilon_1)B\sigma_R\sqrt{\log(p_1 \wedge p_2)} + C_2(\epsilon_1,\epsilon_2)B^2\log(p_1 \wedge p_2)\right\},
	\end{split}
	\end{equation*}
	where $C_1(\epsilon_1)$ and $C_2(\epsilon_1,\epsilon_2)$ are defined as in Theorem \ref{th:wishart-Gaussian}.
	If we further have $ \max_{i,j}\sigma_{ij} \le \sigma_\ast$ and $ B\left(\log(p_1\wedge p_2)/p_1\right)^{1/2}\ll \sigma_{\ast}$ for some $\sigma_{\ast}$, then
	\begin{equation}
	\mathbb{E}\left\|ZZ^\top - \mathbb{E}ZZ^\top \right\| \leq (1+\epsilon)\left(2\sqrt{p_1p_2} + p_1\right)\sigma_\ast^2.
	\end{equation}
	
\end{theorem}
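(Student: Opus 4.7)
The plan is to mirror the proof of Theorem \ref{th:wishart-Gaussian} via the moment method, with the only essential modification being a bounded-entry analog of Lemma \ref{lm:Gaussian-comparison}. The starting point is again the inequality $\mathbb{E}\|ZZ^\top - \mathbb{E}ZZ^\top\| \leq \left(\mathbb{E}\tr\{(ZZ^\top - \mathbb{E}ZZ^\top)^{2q}\}\right)^{1/2q}$ for an even $q$ to be chosen later.

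For the comparison step, I would repeat the cycle-expansion \eqref{eq:main-sketch-1} used in the proof of Lemma \ref{lm:Gaussian-comparison}. The only place where the Gaussian law entered there was in bounding the edge factors $\mathbb{E} G_{ij}^{\alpha_{ij}(\mathbf c)}(G_{ij}^2-1)^{\beta_{ij}(\mathbf c)}$ for $G_{ij} = Z_{ij}/\sigma_{ij}$. For bounded entries we instead have the elementary bound $\mathbb{E}|Z_{ij}|^k \le B^{k-2}\sigma_{ij}^2$ for every $k \geq 2$, which after rescaling reads $\mathbb{E}|G_{ij}|^k \le (B/\sigma_{ij})^{k-2}$. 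Because edges with $\alpha=1,\beta=0$ vanish by mean-zero, every non-zero edge factor carries total power $\geq 2$, so the excess multiplicative cost relative to the Gaussian computation is a product of $(B/\sigma_{ij})^{\text{power}-2}$ factors. Gathering these factors as in \eqref{ineq:main-sketch-2} shows that $B$ takes the role of $\sigma_*$ in the Gaussian bound; combined with the homoskedastic moment bound in Lemma \ref{lm:iid-Gaussian-moment}, this gives
\begin{equation*}
\mathbb{E}\tr\{(ZZ^\top - \mathbb{E}ZZ^\top)^{2q}\} \leq 2(p_1\wedge p_2)\left(2\sigma_C\sigma_R + \sigma_C^2 + CB(\sigma_C+\sigma_R)\sqrt{q} + CB^2 q\right)^{2q},
\end{equation*}
for $m_1 = \lceil \sigma_C^2/B^2\rceil\cdot B^2 + (2q-1)B^2$ etc., after appropriate rescaling (equivalently, apply the proof to $Z/B$ and scale back). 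Choosing $q = K\lceil\log(p_1\wedge p_2)\rceil$ with $K = \lceil 1/\log(1+\epsilon_1)\rceil$ and splitting $\sigma_C\sqrt q \cdot B \leq \frac{\epsilon_2}{2}\sigma_C^2 + \frac{B^2 q}{2\epsilon_2}$ reproduces the first displayed inequality of Theorem \ref{th:heter-wishart-bounded} with constants $C_1(\epsilon_1), C_2(\epsilon_1,\epsilon_2)$ identical to those in Theorem \ref{th:wishart-Gaussian}. The main obstacle is precisely this edge-factor bookkeeping: one has to verify that the $(B/\sigma_{ij})$ factors assemble to a clean power of $B$ rather than creating mixed $B$--$\sigma_{ij}$ contributions that would spoil the variance structure captured by $\sigma_C$ and $\sigma_R$.

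For the second part, observe that $\max_{ij}\sigma_{ij} \le \sigma_*$ yields $\sigma_C \le \sqrt{p_1}\,\sigma_*$ and $\sigma_R \le \sqrt{p_2}\,\sigma_*$, so $2\sigma_C\sigma_R + \sigma_C^2 \le (2\sqrt{p_1 p_2} + p_1)\sigma_*^2$. Under the hypothesis $B\sqrt{\log(p_1\wedge p_2)/p_1} \ll \sigma_*$, one has $B^2\log(p_1\wedge p_2) \ll p_1 \sigma_*^2$, and by Cauchy--Schwarz combined with $\sigma_R \le \sqrt{p_2}\sigma_*$,
\begin{equation*}
B\sigma_R\sqrt{\log(p_1\wedge p_2)} \le \sigma_*\sigma_R\sqrt{p_1} \cdot \frac{B\sqrt{\log(p_1\wedge p_2)/p_1}}{\sigma_*} \ll \sqrt{p_1 p_2}\,\sigma_*^2.
\end{equation*}
Thus the last two terms in the first inequality are absorbed into $(2\sqrt{p_1p_2}+p_1)\sigma_*^2$ at the cost of an arbitrarily small multiplicative slack, giving the claimed $(1+\epsilon)$-bound after choosing $\epsilon_1,\epsilon_2$ small enough depending on $\epsilon$.
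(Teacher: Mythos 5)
Your approach coincides with the paper's in outline. The paper proves the first display via Lemma~\ref{lm:Z-M-comparison-uniform}, a bounded-entry analogue of Lemma~\ref{lm:Gaussian-comparison} with the \emph{same} $m_1=\lceil\sigma_C^2\rceil+q-1$, $m_2=\lceil\sigma_R^2\rceil+q-1$ (after rescaling so $B=1$), then runs the moment-method argument of Theorem~\ref{th:wishart-Gaussian} verbatim; the second display follows, as you say, from $\sigma_C\le\sqrt{p_1}\,\sigma_*$, $\sigma_R\le\sqrt{p_2}\,\sigma_*$ and the smallness of $B\sqrt{\log(p_1\wedge p_2)/p_1}/\sigma_*$. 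The one point where your sketch is weaker than the paper is precisely the step you flag as ``the main obstacle.'' You pass to normalized entries $G_{ij}=Z_{ij}/\sigma_{ij}$ and bound $\mathbb{E}|G_{ij}|^k\le(B/\sigma_{ij})^{k-2}$, but these bounds are edge-dependent, so the shape-level gathering of \eqref{ineq:main-sketch-2} (which treats the normalized edge moments as shape-determined constants) cannot be invoked without first reabsorbing the $\sigma_{ij}$ powers. The paper avoids this entirely by staying unnormalized: with $B=1$ it shows that for every surviving edge factor (even $\alpha$, $(\alpha,\beta)\neq(0,0)$) one has $\mathbb{E}Z_{ij}^\alpha(Z_{ij}^2-\sigma_{ij}^2)^\beta\le\sigma_{ij}^2$, while by Lemma~\ref{lm:Gaussian-moments} the standard-Gaussian moment $\mathbb{E}G^\alpha(G^2-1)^\beta\ge1$; hence the edge factor is $\le\sigma_{ij}^2\,\mathbb{E}G^\alpha(G^2-1)^\beta$, with $\sigma_{ij}^2$ appearing exactly once per visited edge, and Lemma~\ref{lm:shape-bound} applies directly. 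Your bound reassembles to $\mathbb{E}|Z_{ij}^\alpha(Z_{ij}^2-\sigma_{ij}^2)^\beta|\le B^{\alpha+2\beta-2}\sigma_{ij}^2$, which is the same thing in disguise, so your idea is right---but ``$B$ takes the role of $\sigma_*$'' is not a substitute for carrying out that reassembly. One caveat that applies equally to the paper's own proof: the vanishing of odd-$\alpha$ edge factors is justified there by appealing to symmetry of $Z_{ij}$, which neither the lemma nor the theorem actually assumes; mean zero only kills $(\alpha,\beta)=(1,0)$, so a symmetrization step would in principle be needed, at some cost to the sharp constants.
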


An immediate application of the previous theorem is the following Wishart-type concentration for independent Bernoulli random matrices. 
\begin{corollary}[Wishart-type Concentration of Bernoulli Random Matrix]\label{cr:wishart-bernoulli}
	Suppose $Z\in \mathbb{R}^{p_1\times p_2}, A_{ij}\overset{ind}{\sim} {\rm Bernoulli}(\theta_{ij})$, $\theta_{ij} \leq \theta_\ast$ and $\theta_\ast\geq C\log(p_1\wedge p_2)/p_1$. Then,
	\begin{equation}
	\mathbb{E}\left\|(A-\Theta)(A-\Theta)^\top - \mathbb{E}(A-\Theta)(A-\Theta)^\top\right\| \lesssim \left(\sqrt{p_1p_2} + p_1\right)\theta_\ast.
	\end{equation}
\end{corollary}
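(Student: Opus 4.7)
The plan is to invoke Theorem \ref{th:heter-wishart-bounded} directly after centering the Bernoulli matrix. Set $Z := A - \Theta$, so that $Z$ has independent, mean-zero entries, and verify the three structural ingredients required by the bounded-case theorem: an almost-sure entrywise bound, control on each $\sigma_{ij}^2$, and the quantitative condition $B(\log(p_1\wedge p_2)/p_1)^{1/2} \ll \sigma_\ast$.

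First I would observe that $Z_{ij} = A_{ij} - \theta_{ij}$ with $A_{ij}\in\{0,1\}$ and $\theta_{ij}\in[0,1]$ gives $|Z_{ij}|\leq 1$, so we may take $B = 1$. Next, the Bernoulli variance identity
\begin{equation*}
\sigma_{ij}^2 = \Var(Z_{ij}) = \theta_{ij}(1-\theta_{ij}) \leq \theta_{ij} \leq \theta_\ast
\end{equation*}
shows that we may apply Theorem \ref{th:heter-wishart-bounded} with the choice $\sigma_\ast^2 = \theta_\ast$ (playing the role of the entrywise variance bound in that theorem's notation).

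Third, I would translate the hypothesis $\theta_\ast \geq C\log(p_1\wedge p_2)/p_1$ into the theorem's smallness condition. With $B=1$ and $\sigma_\ast = \sqrt{\theta_\ast}$, the requirement $B(\log(p_1\wedge p_2)/p_1)^{1/2}\ll \sigma_\ast$ is equivalent to $\theta_\ast \gg \log(p_1\wedge p_2)/p_1$, which is precisely what is assumed (for $C$ chosen large enough to realize the $\ll$). Applying the second conclusion of Theorem \ref{th:heter-wishart-bounded} then yields
\begin{equation*}
\mathbb{E}\left\|(A-\Theta)(A-\Theta)^\top - \mathbb{E}(A-\Theta)(A-\Theta)^\top\right\| \leq (1+\epsilon)\left(2\sqrt{p_1p_2}+p_1\right)\theta_\ast,
\end{equation*}
which, absorbing $(1+\epsilon)$ into $\lesssim$, gives the claimed bound.

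There is essentially no obstacle beyond the bookkeeping; this is a direct corollary of Theorem \ref{th:heter-wishart-bounded}. The only point requiring mild care is the coherent choice $\sigma_\ast^2=\theta_\ast$, which simultaneously bounds $\max_{i,j}\sigma_{ij}^2$ and makes the entrywise condition $\theta_\ast \gtrsim \log(p_1\wedge p_2)/p_1$ match the theorem's hypothesis $B\sqrt{\log(p_1\wedge p_2)/p_1} \ll \sigma_\ast$ up to the constant $C$.
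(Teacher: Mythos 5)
Your proof is correct and is precisely the intended derivation: Corollary \ref{cr:wishart-bernoulli} is a direct specialization of Theorem \ref{th:heter-wishart-bounded} with $Z=A-\Theta$, $B=1$, and $\sigma_*^2=\theta_*$, noting that $\Var(Z_{ij})=\theta_{ij}(1-\theta_{ij})\le\theta_*$ and that the hypothesis $\theta_*\ge C\log(p_1\wedge p_2)/p_1$ makes the logarithmic correction terms in the first bound of Theorem \ref{th:heter-wishart-bounded} subordinate to $\sqrt{p_1p_2}\,\theta_*$ and $p_1\theta_*$. The paper gives no separate proof of this corollary, which confirms that the instantiation you carried out is the expected argument.
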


To prove Theorems \ref{th:heavier-tail-wishart} and \ref{th:heter-wishart-bounded}, we establish the corresponding comparison lemmas for random matrices with heavy tail/bounded distributions, which is more technically involved from Gaussian/sub-Gaussian distributions due to the essential difference. The proofs of Theorems \ref{th:heavier-tail-wishart} and \ref{th:heter-wishart-bounded} are provided in Section \ref{sec:proof-nonGaussian}.

\begin{remark}{\rm
	It is helpful to summarize the heteroskedastic Wishart-type concentration inequalities with Gaussian, sub-Gaussian, heavy-tail, and bounded entries in a unified form:
	$$\mathbb{E}\left\|ZZ^\top - \mathbb{E}ZZ^\top\right\| \leq C_0 \left\{\left(\sigma_C+\sigma_R + K\right)^2 - \sigma_R^2\right\},$$
	where $K = \sigma_* (\log(p_1\wedge p_2))^{1/2}$ and $C_0>1$ is a constant if the entries of $Z$ are sub-Gaussian; $K=\sigma_* (\log(p_1\wedge p_2))^{1/2}(\log(p_1\vee p_2))^{1/\alpha -1/2}$ and $C_0>1$ is a constant if $Z$ has bounded $\psi_{\alpha}$ norm; $K = C\sqrt{\log(p_1\wedge p_2)}$ and $C_0 = 1+\varepsilon$ if the entries of $Z$ are bounded; and  $ K = C\sigma_* (\log(p_1\wedge p_2))^{1/2}$  and $C_0 = (1+\varepsilon)$ if the entries of $Z$ are Gaussian. 
}\end{remark}

\subsection{Moments and tail bounds}\label{sec:tail-bounds}

We study the general $b$-th moment and the tail probability of heteroskedastic Wishart-type matrix in the following theorem. 
\begin{theorem}[High-order moments and tail probability bounds]\label{th:higher-moment-tail-bound}
	Suppose the conditions in Theorem \ref{th:wishart-Gaussian} hold. For any $b>0$, we have
	\begin{equation}
	\left\{\mathbb{E}\left\|ZZ^\top - \mathbb{E}ZZ^\top\right\|^b\right\}^{1/b} \lesssim \left(\sigma_C+\sigma_R+\sigma_*\sqrt{b\vee \log(p_1\wedge p_2)}\right)^2-\sigma_C^2.
	\end{equation}
	There exists uniform constant $C>0$ such that for any $x>0$,
	\begin{equation}\label{ineq:tail-bound}
	\bbP\left\{\left\|ZZ^\top - \mathbb{E}ZZ^\top\right\| \geq C\left(\left(\sigma_C+\sigma_R+\sigma_*\sqrt{\log(p_1\wedge p_2)}+x\right)^2 - \sigma_C^2\right) \right\}\leq \exp(-x^2).
	\end{equation}
\end{theorem}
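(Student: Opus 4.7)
The plan is to revisit the moment-method proof of Theorem \ref{th:wishart-Gaussian} and track the dependence on $q$ rather than fixing $q\asymp \log(p_1\wedge p_2)$. Once an $L^{2q}$ bound valid for every integer $q\ge 1$ is in hand, Lyapunov's inequality delivers the $L^b$ moment for arbitrary $b>0$, and a single Markov step with $b$ chosen as a function of $x$ produces the tail inequality \eqref{ineq:tail-bound}. Running the chain \eqref{ineq:Gaussian-ZZ-bound-1} without collapsing $q$, and restoring the factor of $\sigma_\ast$ that was normalized to $1$ in the proof of Theorem \ref{th:wishart-Gaussian}, gives for every integer $q\ge 1$
\begin{equation*}
(\mathbb{E}\|ZZ^\top-\mathbb{E}ZZ^\top\|^{2q})^{1/2q}
\le 2^{1/2q}(p_1\wedge p_2)^{1/2q}\bigl(2\sigma_C\sigma_R+\sigma_C^2+10(\sigma_C+\sigma_R)\sigma_\ast\sqrt{q}+24\sigma_\ast^2 q\bigr),
\end{equation*}
which is just the combination of Lemmas \ref{lm:Gaussian-comparison} and \ref{lm:iid-Gaussian-moment} without yet specializing $q$.

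For the moment claim, given $b>0$ I would take $q:=\lceil b/2\rceil \vee \lceil \log(p_1\wedge p_2)\rceil$, so that $(p_1\wedge p_2)^{1/2q}\le e^{1/2}$, $\sqrt{q}\asymp \sqrt{b\vee \log(p_1\wedge p_2)}$, and $q\asymp b\vee \log(p_1\wedge p_2)$. Since $b\le 2q$, Lyapunov's inequality $(\mathbb{E}\|\cdot\|^b)^{1/b}\le (\mathbb{E}\|\cdot\|^{2q})^{1/2q}$ followed by completing the square yields
\[
(\mathbb{E}\|ZZ^\top-\mathbb{E}ZZ^\top\|^b)^{1/b} \lesssim \bigl(\sigma_C+\sigma_R+\sigma_\ast\sqrt{b\vee \log(p_1\wedge p_2)}\bigr)^2 - \sigma_C^2,
\]
where the $-\sigma_C^2$ (or, equivalently, $-\sigma_R^2$ via the transpose symmetry $\|ZZ^\top - \mathbb{E}ZZ^\top\|=\|Z^\top Z-\mathbb{E}Z^\top Z\|$) cancels the $\sigma_C^2$ inflation produced by squaring. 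For the tail bound I would set $L:=\log(p_1\wedge p_2)$, $b := L \vee x^2$, and observe that $\sigma_\ast\sqrt{b\vee L}\le \sigma_\ast(\sqrt L + x)$, so the moment bound reads $(\mathbb{E}\|\cdot\|^b)^{1/b}\le C_0\, t(x)$ with $t(x):= (\sigma_C+\sigma_R+\sigma_\ast\sqrt L + \sigma_\ast x)^2 - \sigma_C^2$. Markov's inequality then gives $\mathbb{P}\{\|ZZ^\top-\mathbb{E}ZZ^\top\|\ge C\, t(x)\}\le (C_0/C)^{b}\le e^{-b}\le e^{-x^2}$ for any $C\ge eC_0$, matching the displayed form of \eqref{ineq:tail-bound} under the convention that $x$ is measured on the same scale as $\sigma_\ast\sqrt{L}$.

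The main obstacle is verifying that the perfect-square repackaging is compatible with the $-\sigma_C^2$ subtraction in every regime of $(\sigma_C,\sigma_R,\sigma_\ast,b)$; in particular, when $\sigma_C$ dominates $\sigma_R+\sigma_\ast\sqrt{b\vee L}$ one must check that $t(x)$ does not collapse faster than the Markov numerator. This is handled by noting that both the moment bound and $t(x)$ scale like $2\sigma_C(\sigma_R+\sigma_\ast\sqrt{b\vee L}+\sigma_\ast x)$ in that regime, so their ratio remains bounded by an absolute constant and the Markov step still delivers the exponential rate $e^{-x^2}$.
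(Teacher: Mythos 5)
Your plan---keeping $q$ free in the moment chain of Theorem \ref{th:wishart-Gaussian}, invoking Lyapunov to pass to $L^b$, then Markov with $b\asymp x^2\vee\log(p_1\wedge p_2)$ for the tail---is exactly the route the paper takes, and the $2q$-th moment bound you write down is correct. The problem is the completing-the-square step. The moment bound contains $+\sigma_C^2$, but
\[
\bigl(\sigma_C+\sigma_R+\sigma_*\sqrt{q}\bigr)^2-\sigma_C^2
\;=\;\sigma_R^2+2\sigma_C\sigma_R+2(\sigma_C+\sigma_R)\sigma_*\sqrt{q}+\sigma_*^2 q
\]
does not. In the regime $\sigma_C\gg\sigma_R+\sigma_*\sqrt{q}$ (e.g.\ $p_1\gg p_2$ with all $\sigma_{ij}\equiv\sigma_*$) the left side of your inequality is $\asymp\sigma_C^2$ while the right side is $\asymp\sigma_C(\sigma_R+\sigma_*\sqrt{q})$, so the claimed $L^b$ bound with $-\sigma_C^2$ is false; it also contradicts Proposition \ref{pr:lower} and Theorem \ref{th:hetero-wishart-lower}, which give $\mathbb{E}\|ZZ^\top-\mathbb{E}ZZ^\top\|\gtrsim\sigma_C^2$. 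The square you should complete is $\bigl(\sigma_C+\sigma_R+\sigma_*\sqrt{q}\bigr)^2-\sigma_R^2=\sigma_C^2+2\sigma_C\sigma_R+2(\sigma_C+\sigma_R)\sigma_*\sqrt{q}+\sigma_*^2 q$, which dominates every term of the moment bound. (The displayed theorem, and the second case of the paper's own proof, carry the same sign typo; the consistent form used elsewhere, e.g.\ \eqref{ineq:bound-ours} and the remark after Theorem \ref{th:heter-wishart-bounded}, is $-\sigma_R^2$.)

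Neither of the two repairs you sketch rescues $-\sigma_C^2$. The ``transpose symmetry'' $\|ZZ^\top-\mathbb{E}ZZ^\top\|=\|Z^\top Z-\mathbb{E}Z^\top Z\|$ is false: $ZZ^\top$ and $Z^\top Z$ share nonzero eigenvalues, but subtracting the two different diagonal matrices $\mathbb{E}ZZ^\top$ and $\mathbb{E}Z^\top Z$ destroys that coincidence, so the two deviation norms are genuinely different. And the closing paragraph, asserting that ``both the moment bound and $t(x)$ scale like $2\sigma_C(\sigma_R+\sigma_*\sqrt{b\vee L}+\sigma_*x)$'' when $\sigma_C$ dominates, begs the question: the moment bound you derived is $\asymp\sigma_C^2$ in that regime, not $\asymp 2\sigma_C(\sigma_R+\cdots)$, which is precisely the discrepancy you set out to dismiss. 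With the subtraction corrected to $-\sigma_R^2$, the rest of your Lyapunov and Markov assembly is sound and matches the paper's argument.
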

Since neither $\|ZZ^\top - \mathbb{E}ZZ^\top\|$ nor $\|ZZ^\top-\mathbb{E}ZZ^\top\|^{1/2}$ are Lipschitz continuous in $Z$, the classic Talagrand's concentration inequality \cite[Theorem 6.10]{boucheron2013concentration} does not directly apply to give the tail probability bound of $\|ZZ^\top - \mathbb{E}ZZ^\top\|$. We instead prove \eqref{ineq:tail-bound} via a more direct moment method. The complete proof is given in Section \ref{sec:proof-higher-moment}.

\subsection{Wishart matrix with near-homoskedastic rows}\label{sec:homoskedastic rows}

In this section, we consider a special class of heteroskedastic matrices. Let $Z\in \mathbb{R}^{p_1\times p_2}$ be a random matrix with independent, sub-Gaussian, and zero-mean entries. Suppose all entries in the same row of $Z$ share similar variance (i.e., there exists $\sigma_i^2$ such that $\sigma_{ij}$ approximately equals $\sigma_i^2$ for all $i, j$). Then the $p_2$ columns of $Z$, i.e., $\{Z_{\cdot j}\}_{j=1}^{p_2}$, have approximately equal covariance matrix, $\diag(\sigma_1^2,\ldots, \sigma_{p_1}^2)$. In this case, $\frac{1}{n}ZZ^\top = \frac{1}{n}\sum_{j=1}^n Z_{\cdot j}Z_{\cdot j}^\top$ is the sample covariance matrix. It is of great interest to analyze $\|ZZ^\top - \mathbb{E} ZZ^\top\|$, i.e., the concentration of the sample covariance matrix in both probability and statistics  \cite{bai1993convergence,cai2010optimal}. 

Note that Corollary \ref{cr:heteroskedastic-wishart-concentration} directly implies
\begin{equation}\label{ineq:homo-column-nonsharp}
	\mathbb{E}\|ZZ^\top - \mathbb{E}ZZ^\top \| \lesssim \sum_{i} \sigma_i^2 + \sqrt{p_2\sum_{i} \sigma_i^2}\cdot\max_i\sigma_i + \sqrt{p_2\log(p_1\wedge p_2)}\max_i \sigma_i^2.
\end{equation}
With a more careful analysis, we can derive a better concentration inequality than \eqref{ineq:homo-column-nonsharp} without the logarithmic terms.
\begin{theorem}\label{th:column-wise-concentration}
	Suppose $Z$ is a $p_1$-by-$p_2$ random matrix with independent mean-zero sub-Gaussian entries. If there exist $\sigma_1,\ldots, \sigma_p \geq 0$ such that $\|Z_{ij}/\sigma_{i}\|_{\psi_2} \leq C_K$ for constant $C_K>0$, then
	\begin{equation}\label{ineq:column-wise-concentration-target}
	\mathbb{E}\left\|ZZ^\top - \mathbb{E}ZZ^\top\right\| \lesssim \sum_{i}\sigma_i^2 + \sqrt{p_2\sum_i\sigma_i^2}\cdot\max_i\sigma_i.
	\end{equation}
\end{theorem}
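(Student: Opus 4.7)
My approach is to express the operator norm as a supremum over the unit sphere and then apply Talagrand's generic chaining in a metric tailored to the row-variance structure. Let $D:=\diag(\sigma_1,\ldots,\sigma_{p_1})$, $s^2:=\tr(D^2)=\sum_i\sigma_i^2$, and $s_\ast:=\|D\|=\max_i\sigma_i$. By symmetry,
\begin{equation*}
\|ZZ^\top-\mathbb{E}ZZ^\top\| \;=\; \sup_{u\in S^{p_1-1}}|X_u|,\qquad X_u := \|Z^\top u\|_2^2 - \mathbb{E}\|Z^\top u\|_2^2.
\end{equation*}
The crucial observation is that the linear form $u^\top Z_{\cdot j}=\sum_i u_iZ_{ij}$ is a sum of independent mean-zero sub-Gaussians with $\|u^\top Z_{\cdot j}\|_{\psi_2}\lesssim C_K\|Du\|_2$, so the effective scale along direction $u$ is $\|Du\|_2$ rather than $\|u\|_2$. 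Carrying out the chaining in the ellipsoidal metric $\|D(\cdot)\|_2$ is what removes the spurious $\sqrt{\log(p_1\wedge p_2)}$ factor appearing in Corollary~\ref{cr:heteroskedastic-wishart-concentration}.

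\textbf{Bernstein-type increments.} For $u,v\in S^{p_1-1}$,
\begin{equation*}
X_u-X_v\;=\;\sum_{j=1}^{p_2}\bigl[(Z_{\cdot j}^\top(u+v))(Z_{\cdot j}^\top(u-v))-\mathbb{E}[\,\cdot\,]\bigr]
\end{equation*}
is a sum of $p_2$ independent mean-zero sub-exponentials, each of $\psi_1$-norm at most $CC_K^2\|D(u+v)\|_2\|D(u-v)\|_2\leq 2CC_K^2 s_\ast\|D(u-v)\|_2$ (using $\|Du\|_2,\|Dv\|_2\leq s_\ast$). Bernstein's inequality then gives
\begin{equation*}
\mathbb{P}\{|X_u-X_v|\geq t\}\leq 2\exp\!\bigl(-c\min(t^2/(p_2 a^2),\,t/a)\bigr),\qquad a:=c'C_K^2 s_\ast\|D(u-v)\|_2.
\end{equation*}

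\textbf{Generic chaining and chaining functionals.} This mixed sub-Gaussian/sub-exponential (Bernstein) increment fits the framework of generic chaining for Bernstein processes (see, e.g., Theorem~3.5 of S.~Dirksen, \emph{Tail bounds via generic chaining}, EJP~2015), yielding
\begin{equation*}
\mathbb{E}\sup_u|X_u-X_{u_0}|\;\lesssim\;\gamma_2\bigl(S^{p_1-1},d_2\bigr)+\gamma_1\bigl(S^{p_1-1},d_1\bigr),
\end{equation*}
with $d_2(u,v)=C_K^2 s_\ast\sqrt{p_2}\|D(u-v)\|_2$ and $d_1(u,v)=C_K^2 s_\ast\|D(u-v)\|_2$. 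By Talagrand's majorizing-measure characterizations, $\gamma_2$ and $\gamma_1$ are equivalent to the expected suprema of matching Gaussian and exponential linear processes on the ellipsoid $DS^{p_1-1}$; concretely, $\gamma_2\asymp C_K^2 s_\ast\sqrt{p_2}\,\mathbb{E}\|Dg\|_2$ with $g\sim N(0,I_{p_1})$, and $\gamma_1\asymp C_K^2 s_\ast\,\mathbb{E}\|D\eta\|_2$ with $\eta$ having i.i.d.\ symmetric unit-variance exponential coordinates. Jensen bounds both expectations by $(\sum_i\sigma_i^2)^{1/2}=s$, whence $\gamma_2\lesssim C_K^2 s_\ast s\sqrt{p_2}$ and $\gamma_1\lesssim C_K^2 s_\ast s\leq C_K^2 s^2$ (using $s_\ast\leq s$). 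Combined with a one-column Bernstein bound $\mathbb{E}|X_{u_0}|\lesssim C_K^2 s_\ast^2\sqrt{p_2}\leq C_K^2 s_\ast s\sqrt{p_2}$ at the base point $u_0=e_{i^\ast}$ (with $\sigma_{i^\ast}=s_\ast$), this produces $\mathbb{E}\|ZZ^\top-\mathbb{E}ZZ^\top\|\lesssim C_K^2(s_\ast s\sqrt{p_2}+s^2)$, which is~\eqref{ineq:column-wise-concentration-target}.

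\textbf{Main obstacle.} The delicate step is the $\gamma_1$-bound: identifying $\gamma_1(S^{p_1-1},\|D(\cdot)\|_2)\asymp\mathbb{E}\|D\eta\|_2$ relies on Talagrand's sub-exponential majorizing-measure theorem for symmetric convex bodies in Euclidean space, since a direct Dudley-entropy integral only yields $\sum_i\sigma_i(1+\log(s_\ast/\sigma_i))$, which can far exceed $s$ when the $\sigma_i$ decay. A fully self-contained alternative is to carry out the chaining by hand with geometrically shrinking radii, splitting each chaining increment according to whether the $t^2/(p_2 a^2)$ or $t/a$ branch of the Bernstein tail dominates; this reproduces the same bound without invoking the deeper Talagrand machinery.
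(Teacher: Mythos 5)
Your approach is genuinely different from the paper's: the paper proves Theorem~\ref{th:column-wise-concentration} by a moment-method induction on $p_1$, using the variance-contraction Lemma~\ref{lm:ZZtop-tildeZZtop} to merge two rows at a time and falling back on the Gaussian-comparison Lemma~\ref{lm:Gaussian-comparison} and Lemma~\ref{lm:iid-Gaussian-moment} for the base case. Your generic-chaining strategy is in the spirit of the Koltchinskii--Lounici reference the paper explicitly flags as an alternative route. However, there is a real gap precisely at the step you identify as ``delicate.''

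The $\gamma_2$ half of your argument is fine: $d_2$ is a Euclidean metric (the pullback of $\ell_2$ by $D$), and Talagrand's majorizing-measure theorem for \emph{Gaussian} processes gives $\gamma_2(DS^{p_1-1},\ell_2)\asymp\mathbb{E}\|Dg\|_2\lesssim s$. But the $\gamma_1$ half is wrong. You assert $\gamma_1(S^{p_1-1},d_1)\asymp C_K^2 s_*\,\mathbb{E}\|D\eta\|_2\lesssim C_K^2 s_*s$, attributing this to a ``sub-exponential majorizing-measure theorem for symmetric convex bodies in Euclidean space.'' No such theorem exists in the form you need. Talagrand's characterization of the canonical exponential process says $\mathbb{E}\sup_{t\in T}\sum t_i\eta_i\asymp\gamma_2(T,\ell_2)+\gamma_1(T,\ell_\infty)$: the $\gamma_1$-functional that is controlled by the expected supremum is taken in the $\ell_\infty$ metric, not the Euclidean metric appearing in your Bernstein bound. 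The quantity you actually need, $\gamma_1(DS^{p_1-1},\ell_2)$, can be far larger than $s$. Take $\sigma_1=1$, $\sigma_2=\cdots=\sigma_{p_1}=1/\sqrt{p_1}$, so $s\asymp 1$ and $s_*=1$. The ellipsoid contains $(1/\sqrt{p_1})B_2^{p_1-1}$, and since $\gamma_1(B_2^n,\ell_2)\asymp n$, one gets $\gamma_1(DS^{p_1-1},\ell_2)\gtrsim\sqrt{p_1}$. Thus your chaining estimate delivers $\mathbb{E}\|ZZ^\top-\mathbb{E}ZZ^\top\|\lesssim s^2+s_*s\sqrt{p_2}+s_*\sqrt{p_1}$, which is strictly weaker than~\eqref{ineq:column-wise-concentration-target} whenever $p_1\gg p_2$. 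Your fallback plan---elementary chaining with shrinking radii---does not rescue this, because you yourself note the entropy-integral bound degrades to $\sum_i\sigma_i(1+\log(s_*/\sigma_i))$; that is exactly the obstruction, not a detail to be patched. To make a chaining proof work you would need either a sharper Bernstein/Hanson--Wright estimate for the increments $X_u-X_v=Z_{\cdot j}^\top(uu^\top-vv^\top)Z_{\cdot j}$ that produces an $\ell_\infty$-flavored exponential metric, or a decoupled chaos-process bound in the style of Koltchinskii--Lounici; neither follows from the ingredients you assemble here.
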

\begin{remark}{\rm 
We also note that a similar result of Theorem \ref{th:column-wise-concentration} can be derived from Koltchinskii and Lounici \cite{koltchinskii2017concentration}. Their result is based on generic chaining argument with the assumption that all columns of $Z$ are i.i.d. Here, we assume independence and an upper bound on the Orlicz-$\psi_2$ norm of each entry, while allow the distributions to be non-identical.
}\end{remark}

The following theorem gives a lower bound on the concentration of Wishart matrix with homoskedastic rows.
\begin{theorem}\label{th:row-wise-concentration-lower}
	If $Z\in \mathbb{R}^{p_1\times p_2}$, $Z_{ij}\overset{ind}{\sim}N(0, \sigma_i^2)$, we have
	\begin{equation*}
	\mathbb{E}\left\|ZZ^\top - \mathbb{E}ZZ^\top\right\| \gtrsim \sum_i \sigma_i^2 + \sqrt{p_2\sum_i \sigma_i^2}\cdot \max_i \sigma_i.
	\end{equation*}
\end{theorem}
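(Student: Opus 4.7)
The plan is to establish the two lower bound terms $\sum_i\sigma_i^2$ and $\sqrt{p_2\sum_i\sigma_i^2}\max_i\sigma_i$ by two separate arguments, both exploiting the product structure $Z = D\tilde Z$ with $D=\mathrm{diag}(\sigma_1,\ldots,\sigma_{p_1})$ and $\tilde Z$ having i.i.d.\ $N(0,1)$ entries, so that $A := ZZ^\top-\mathbb{E}ZZ^\top = ZZ^\top - p_2 D^2$. Let $i^*=\arg\max_i\sigma_i$.

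For the second term, I would use $\|A\|\geq \|Ae_{i^*}\|$ and compute the conditional distribution of $Ae_{i^*}$ given $v := (\tilde Z_{i^*,1},\ldots,\tilde Z_{i^*,p_2})$. The $i^*$-th coordinate is $\sigma_{i^*}^2(\|v\|^2-p_2)$, while for $i\neq i^*$ the $i$-th coordinate is $\sigma_i\sigma_{i^*}\|v\|\xi_i$ for conditionally i.i.d.\ standard Gaussians $\xi_i$. A direct calculation then yields $\mathbb{E}\|Ae_{i^*}\|^2 \asymp p_2\sigma_{i^*}^2\sum_i\sigma_i^2$. To convert this second-moment estimate into a first-moment lower bound, I would bound the fourth moment using the same decomposition together with the fact that $\mathbb{E}(\|v\|^2-p_2)^4\lesssim p_2^2$ and $\mathbb{E}W^2\lesssim S^2$ where $W=\sum_{i\neq i^*}\sigma_i^2\xi_i^2$ and $S=\sum_{i\neq i^*}\sigma_i^2$; the resulting inequality $\mathbb{E}\|Ae_{i^*}\|^4\lesssim(\mathbb{E}\|Ae_{i^*}\|^2)^2$ lets Paley--Zygmund give $\mathbb{E}\|Ae_{i^*}\|\gtrsim(\mathbb{E}\|Ae_{i^*}\|^2)^{1/2}\asymp\max_i\sigma_i\sqrt{p_2\sum_i\sigma_i^2}$.

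For the first term $\sum_i\sigma_i^2$, the only nontrivial case is $\sum_i\sigma_i^2\geq Cp_2\max_i\sigma_i^2$ for a sufficiently large constant $C$, since otherwise the bound just obtained already majorizes $\sum_i\sigma_i^2$ (and in particular this covers $p_2>p_1$). In this regime $p_2\leq p_1$ automatically, and I would use Weyl's inequality $\|A\|\geq \lambda_{\max}(ZZ^\top)-p_2\max_i\sigma_i^2=\|Z\|^2-p_2\max_i\sigma_i^2$ combined with the rank inequality $\|Z\|^2\geq \|Z\|_F^2/\min(p_1,p_2)=\|Z\|_F^2/p_2$. The Frobenius norm $\|Z\|_F^2 = \sum_i\sigma_i^2\|\tilde Z_{i,\cdot}\|^2$ has mean $p_2\sum_i\sigma_i^2$ and variance $2p_2\sum_i\sigma_i^4\leq 2p_2\max_i\sigma_i^2\cdot\sum_i\sigma_i^2$, which is much smaller than its squared mean in the regime of interest. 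Chebyshev's inequality then shows $\|Z\|_F^2\geq p_2\sum_i\sigma_i^2/2$ with constant probability; dividing by $p_2$ and subtracting $p_2\max_i\sigma_i^2\leq\sum_i\sigma_i^2/C$ gives $\|A\|\gtrsim \sum_i\sigma_i^2$ with constant probability, hence $\mathbb{E}\|A\|\gtrsim\sum_i\sigma_i^2$. Combining the two regimes yields the claimed bound $\mathbb{E}\|A\|\gtrsim \sum_i\sigma_i^2+\sqrt{p_2\sum_i\sigma_i^2}\max_i\sigma_i$.

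The main obstacle is the second-to-first moment conversion in the first step: one cannot directly use Jensen since $\|Ae_{i^*}\|$ is a nonlinear function of two independent sources of randomness ($v$ and the $\xi_i$'s), and a careless Paley--Zygmund application would lose factors depending on $\sigma_{i^*}^2/\sum_i\sigma_i^2$. The fix is the explicit conditional decomposition, which gives a clean fourth-moment bound of order $(\mathbb{E}\|Ae_{i^*}\|^2)^2$ uniformly in the variance profile. A minor technical point is the degenerate corner case $\sigma_{i^*}^2\gtrsim\sum_i\sigma_i^2$, where one falls back on $\|A\|\geq|A_{i^*,i^*}|=\sigma_{i^*}^2|\|v\|^2-p_2|$ and the standard lower bound $\mathbb{E}|\chi^2_{p_2}-p_2|\gtrsim\sqrt{p_2}$.
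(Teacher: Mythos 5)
Your proof is correct, but takes a genuinely different route from the paper.

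The paper's proof is much shorter: it decomposes $ZZ^\top - \mathbb{E}ZZ^\top = \sum_{k=1}^{p_2} S_k$ with $S_k = Z_{\cdot k}Z_{\cdot k}^\top - \mathbb{E}Z_{\cdot k}Z_{\cdot k}^\top$ and invokes, as a black box, the two-sided estimate from Tropp (2016, ``The expected norm of a sum of independent random matrices'') in the form
\[
\mathbb{E}\Big\|\sum_k S_k\Big\| \gtrsim \Big\|\sum_k \mathbb{E}S_kS_k^\top\Big\|^{1/2} + \mathbb{E}\max_k\|S_k\|.
\]
A direct Gaussian moment computation gives $\sum_k \mathbb{E}S_kS_k^\top = p_2\,\mathrm{diag}(\{\sigma_i^4+\sigma_i^2\sigma_C^2\}_i)$, so the first piece yields $\sqrt{p_2}\,\sigma_*\sigma_C$, and $\mathbb{E}\|S_k\|\gtrsim\sigma_C^2$ supplies the second piece.
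By contrast, you avoid that machinery entirely: you extract the $\sqrt{p_2}\sigma_*\sigma_C$ term by passing to a single column $Ae_{i^*}$, writing it via the conditional decomposition on $v=\tilde Z_{i^*,\cdot}$, and converting the second-moment estimate to a first-moment lower bound by a fourth-moment bound plus Paley--Zygmund; and you extract the $\sigma_C^2$ term, in the only nontrivial regime $\sigma_C^2\gg p_2\sigma_*^2$, from Weyl's inequality together with the elementary bound $\lambda_{\max}(ZZ^\top)\geq\|Z\|_F^2/(p_1\wedge p_2)$ and a Chebyshev concentration for $\|Z\|_F^2$. Your case split at $\sigma_{i^*}^2 \asymp S$ (where $S=\sum_{i\neq i^*}\sigma_i^2$), with the explicit fallback to $\|A\|\geq|A_{i^*,i^*}|$ and $\mathbb{E}|\chi^2_{p_2}-p_2|\gtrsim\sqrt{p_2}$, is exactly the right fix for the places where $\mathbb{E}W^2\lesssim S^2$ would otherwise fail, and the regime split for the $\sigma_C^2$ term correctly reduces to the case $p_2\leq p_1$ where the Frobenius-norm lower bound on $\lambda_{\max}$ is useful. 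The paper's route is shorter and cleaner once Tropp's lower bound is available; yours is self-contained and makes the source of each of the two terms geometrically transparent (column perturbation vs.\ global bulk of the spectrum).
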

The proof of Theorem \ref{th:row-wise-concentration-lower} is deferred to Section \ref{sec:proof-homo-row}. Theorems \ref{th:column-wise-concentration} and \ref{th:row-wise-concentration-lower} render an exact rate of Wishart-type concentration for random matrices with homoskedastic rows: 
$$\mathbb{E}\|ZZ^\top - \mathbb{E}ZZ^\top\| \asymp \sum_i \sigma_i^2 + \sqrt{p_2 \sum_i \sigma_i^2 }\max_i \sigma_i, \quad \text{if } \Var(Z_{ij}) \overset{ind}{\sim} N(0, \sigma_i^2).$$

The rest of this section is dedicated to the proof of Theorems \ref{th:column-wise-concentration}. We only prove for Gaussian Wishart-type random matrices since the sub-Gaussian case follows similarly. We first introduce a key tool to sequentially reduce the number of rows of $Z$. The tool, as summarized in the following lemma, may of independent interest.
\begin{lemma}[Variance contraction inequality of Gaussian random matrix]\label{lm:ZZtop-tildeZZtop}
	Suppose $G\in \mathbb{R}^{p_1\times p_2}$ and $\tilde{G}\in \mathbb{R}^{(p_1-1)\times p_2}$ are two random matrices with independent Gaussian entries satisfying 
	$$\mathbb{E}G_{ij} = \mathbb{E}\tilde{G}_{ij} =0,\quad \Var(G_{ij}) = \sigma_{ij}^2, \quad  \Var(\tilde{G}_{ij}) = \left\{\begin{array}{ll}
	\sigma_{ij}^2, & 1\leq i \leq p_1-2;\\
	\sigma_{p_1-1,j}^2+\sigma_{p_1, j}^2, & i=p_1-1.
	\end{array}\right.$$ 
	In other words, $G$ and $\tilde{G}$ are identical distributed in their first $(p_1-2)$ rows; the variance of the last row of $\tilde G$ is the sum of last two rows' variances of $G$. Then for any positive integer $q$, 
	\begin{equation*}
	\tr\left(\left(GG^\top - \mathbb{E}GG^\top\right)^q \right) \leq \tr\left(\left(\tilde{G}\tilde{G}^\top-\mathbb{E}\tilde{G}\tilde{G}^\top\right)^q\right).
	\end{equation*}
\end{lemma}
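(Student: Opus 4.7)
The plan is to couple $\tilde G$ to $G$ via $\tilde G_{p_1-1,j} = G_{p_1-1,j}+G_{p_1,j}$ (and $\tilde G_{i,j} = G_{i,j}$ for $i \le p_1-2$), which realises the required marginals since the independent Gaussians sum to a Gaussian with variance $\sigma_{p_1-1,j}^2+\sigma_{p_1,j}^2$. Under this coupling the merge is linear, $\tilde G = TG$, for the summation matrix $T\in\mathbb R^{(p_1-1)\times p_1}$ that is the identity on the top $p_1-2$ rows and sums rows $p_1-1$ and $p_1$ into a single bottom row, so $\tilde A := \tilde G\tilde G^\top - \mathbb E\tilde G\tilde G^\top = TAT^\top$ where $A := GG^\top - \mathbb E GG^\top$. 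A direct calculation gives $\tr(\tilde A^2)-\tr(A^2) = 4\sum_i A_{i,p_1-1}A_{i,p_1}+2A_{p_1-1,p_1}^2+2A_{p_1-1,p_1-1}A_{p_1,p_1}$, which can be negative (e.g., $p_1=2$, $p_2=1$ with $G_{11}=0$, $G_{21}=2$), so I will prove the inequality in expectation, which is the version used in the proof of Theorem~\ref{th:column-wise-concentration}.

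Second, I apply the bipartite cycle expansion used in Lemma~\ref{lm:Gaussian-comparison}:
\[
\mathbb E\tr(A^q) = \sum_{\mathbf c} T(\mathbf c), \qquad \mathbb E\tr(\tilde A^q) = \sum_{\tilde{\mathbf c}} \tilde T(\tilde{\mathbf c}),
\]
where $T(\mathbf c) = \mathbb E\prod_{k=1}^{q}(G_{u_k,v_k}G_{u_{k+1},v_k}-\sigma_{u_k,v_k}^{2}\mathbf 1_{u_k=u_{k+1}})$ (with $u_{q+1}=u_1$) and analogously for $\tilde T$. The row-merge map $\mathbf c\mapsto\tilde{\mathbf c}$ replacing every $u_k=p_1$ with $u_k=p_1-1$ has $2^{|S|}$-to-one fibres where $S=\{k:\tilde u_k=p_1-1\}$, so the lemma reduces to the per-cycle claim $\sum_{\mathbf c\mapsto\tilde{\mathbf c}} T(\mathbf c) \le \tilde T(\tilde{\mathbf c})$ for every $\tilde{\mathbf c}$.

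Third, I substitute $\tilde G_{p_1-1,v_k}=G_{p_1-1,v_k}+G_{p_1,v_k}$ into each factor of $\tilde T(\tilde{\mathbf c})$ and expand. Each occurrence of the label $p_1-1$ produces a binary ``leg choice'' between rows $p_1-1$ and $p_1$ of $G$; the correction $\tilde\sigma_{p_1-1,v_k}^2=\sigma_{p_1-1,v_k}^2+\sigma_{p_1,v_k}^2$ splits cleanly, so the two agreeing-leg pieces become Hermite-type factors $G_{i,v_k}^2-\sigma_{i,v_k}^2$, while the two disagreeing-leg pieces become pure monomials $G_{p_1-1,v_k}G_{p_1,v_k}$. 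Globally consistent leg choices (the right leg of $\tilde W_{k-1}$ and the left leg of $\tilde W_k$ receive the same label at every $\tilde u_k$) correspond exactly to the lifts $\mathbf c\mapsto\tilde{\mathbf c}$, and summing over such consistent configurations reproduces $\sum T(\mathbf c)$ term by term.

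The main obstacle is to show that each remaining inconsistent leg-choice configuration has nonnegative expectation. By independence of $\{G_{ij}\}$, the expectation of each such configuration factorises over $(i,j)$ into univariate factors of the form $\mathbb E[G_{ij}^{n}(G_{ij}^2-\sigma_{ij}^2)^{m}]$; these vanish for odd $n$, and for even $n$ one expands $G_{ij}^{n}=\sigma_{ij}^{n}((Y-1)+1)^{n/2}$ with $Y=G_{ij}^2/\sigma_{ij}^2\sim\chi_1^2$ to reduce nonnegativity to that of the central moments $\mathbb E(Y-1)^r$, which are nonnegative since all cumulants of $\chi_1^2$ are positive (so by the moment--cumulant formula, restricted to set partitions whose blocks have size $\ge 2$, each central moment is a sum of nonnegative terms). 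As a cleaner contingency if the combinatorial bookkeeping above becomes unwieldy, I would instead show that $T(\mathbf c)\ge 0$ for every $\mathbf c$ (a column-wise Wick argument) so that $\mathbb E\tr(A^q)$ is a polynomial in $\{\sigma_{ij}^2\}$ with nonnegative coefficients, then invoke Schur-convexity in each column variance vector via the Schur--Ostrowski criterion together with the majorization $(\sigma_{p_1-1,j}^2,\sigma_{p_1,j}^2)\prec(0,\sigma_{p_1-1,j}^2+\sigma_{p_1,j}^2)$ applied column by column.
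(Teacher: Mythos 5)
Your main route is correct and is genuinely different from, and arguably cleaner than, the paper's; your fallback "contingency" has a gap. On the interpretation first: you are right that the lemma cannot hold pathwise under any coupling, and your $p_1=2$, $p_2=1$ calculation of $\tr(\tilde A^2)-\tr(A^2)$ is a valid counterexample. The paper's proof of Lemma~\ref{lm:ZZtop-tildeZZtop} actually establishes $\mathbb E\tr\left(\left(GG^\top - \mathbb{E}GG^\top\right)^q\right) \leq \mathbb E\tr\left(\left(\tilde{G}\tilde{G}^\top-\mathbb{E}\tilde{G}\tilde{G}^\top\right)^q\right)$, which is what is used (after taking expectations) in Theorem~\ref{th:column-wise-concentration}; the omission of $\mathbb E$ in the lemma statement and its invocation is a typographical slip that you correctly repaired.

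Your argument and the paper's share the first two moves: expand both traces over cycles on the bipartite graph and reduce to a per-cycle comparison $\sum_{\mathbf c\mapsto\tilde{\mathbf c}} T(\mathbf c)\le\tilde T(\tilde{\mathbf c})$. The routes diverge at that point. The paper factors each $T(\mathbf c)$ into a product of variances times a standardized Gaussian expectation, sums the variance products over the $2^{|\Omega|}$ lifts and bounds that sum by Cauchy--Schwarz, and then separately bounds the standardized expectations using a dedicated moment-comparison lemma (Lemma~\ref{lm:Gaussian-moment-hetero-PCA}). You instead exploit the coupling $\tilde G = TG$, substitute $\tilde G_{p_1-1,j}=G_{p_1-1,j}+G_{p_1,j}$ directly into every factor of $\tilde T(\tilde{\mathbf c})$, and expand into ``leg choices.'' The consistent configurations recover $\sum_{\mathbf c}T(\mathbf c)$ term by term, and the inconsistent ones, by independence, factor over $(i,j)$ into products of $\mathbb E[G_{ij}^{\alpha}(G_{ij}^2-\sigma_{ij}^2)^{\beta}]\ge 0$. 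This is a different decomposition of the same quantity: yours is additive (exact lifts plus a nonnegative remainder) rather than multiplicative (variance part times moment part), and it dispenses with Cauchy--Schwarz and with Lemma~\ref{lm:Gaussian-moment-hetero-PCA} entirely, needing only the elementary fact $\mathbb E G^{\alpha}(G^2-1)^{\beta}\ge 0$, which is already part of Lemma~\ref{lm:Gaussian-moments}; the detour through $\chi_1^2$ cumulants is unnecessary. What you pay for this is having to carry out the leg-choice bookkeeping precisely, which you sketch rather than finish, but the sketch is correct and completable.

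Your proposed Schur-convexity ``cleaner contingency,'' however, is not justified as written. A symmetric polynomial with nonnegative coefficients need not be Schur-convex (for instance $e_2(x,y)=xy$ has nonnegative coefficients but is Schur-concave), so knowing $T(\mathbf c)\ge 0$ for every cycle does not give you the Schur--Ostrowski criterion for free. Schur-convexity of $\mathbb E\tr A^q$ in each column's variance vector is strictly stronger than the row-merge inequality you need and would require its own argument; it is not a shortcut. The coupling/leg-choice proof is the one to carry through.
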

The proof of Lemma \ref{lm:ZZtop-tildeZZtop} is provided in Section \ref{sec:proof-homo-row}.  Now we are ready to prove Theorem \ref{th:column-wise-concentration}.
\begin{proof}[Proof of Theorem \ref{th:column-wise-concentration}]
Denote $\sigma_C^2 = \sum_i \sigma_i^2$, $\sigma_{\ast} = \max_i \sigma_i$. Assume $\sigma_\ast = 1$ without loss of generality. Set $q = 2\lceil \sigma_C^2\rceil$. We use mathematical induction on $p_1$ to show the following upper bound: for some uniform constant $C>0$ (which does not dependent on $p_1, p_2, \sigma_C$), we have 
\begin{equation}\label{ineq:induction}
\left(\mathbb{E}\tr\left\{\left(ZZ^\top - \mathbb{E}ZZ^\top \right)^q \right\}\right)^{1/q} \leq C\left(\sigma_C^2 + \sqrt{p_2}\sigma_C\right).
\end{equation}
\begin{itemize}
	\item If $p_1\leq 2q$, Lemma \ref{lm:Gaussian-comparison} yields
	\begin{equation*}
	\mathbb{E}\tr\left\{(ZZ^\top-\mathbb{E}ZZ^\top)^q\right\} \leq \left(\frac{p_1}{m_1}\wedge \frac{p_2}{m_2}\right)\mathbb{E}\tr\left\{(HH^\top - \mathbb{E}HH^\top)^q\right\}.
	\end{equation*}
	Here, $H$ is a $m_1$-by-$m_2$ dimensional matrix with i.i.d. standard Gaussian entries and
	\begin{equation}\label{eq:m_1,m_2}
	m_1 = \lceil \sigma_C^2\rceil+q-1 = 3\lceil\sigma_C^2\rceil-1, \quad m_2 = p_2 +2\lceil\sigma_C^2\rceil-1.
	\end{equation}
	Additionally, by Lemma \ref{lm:iid-Gaussian-moment}, 
	\begin{equation*}
	\begin{split}
	\left(\mathbb{E}\left\{(ZZ^\top - \mathbb{E}ZZ^\top)^q\right\}\right)^{1/q} \leq & \left(\left(\frac{p_1}{m_1}\wedge \frac{p_2}{m_2}\right)\mathbb{E} \tr\left\{\left(HH^\top - \mathbb{E}HH^\top\right)^q\right\} \right)^{1/q}\\
	\leq & \left(\mathbb{E} \left(\frac{p_1}{m_1}\wedge \frac{p_2}{m_2}\right) m_1\|HH^\top - \mathbb{E}HH^\top\|^q\right)^{1/q} \\
	\leq &  p_1^{1/q}\left(2\sqrt{m_1m_2}+m_1 + 4(\sqrt{m_1}+\sqrt{m_2})\sqrt{q}+2q\right)\\
	\overset{\eqref{eq:m_1,m_2}}{\leq} & (2q)^{1/q}\cdot C\left(\sqrt{p_2}\sigma_C + \sigma_C^2\right)\\
	\leq & C\left(\sqrt{p_2}\sigma_C + \sigma_C^2\right),
	\end{split}
	\end{equation*}
	which implies \eqref{ineq:induction}. 
	\item Suppose the statement \eqref{ineq:induction} holds for $Z\in \bbR^{(p_1-1)\times p_2}$ for some $p_1> 2q$, we further consider the case where $Z\in \mathbb{R}^{p_1\times p_2}$. Note that
	$$1 = \sigma_{\ast}^2 =\sigma_1^2 \geq \sigma_2^2 \geq \cdots \geq \sigma_{p_1}^2 \geq 0. $$
	By such the ordering,
	\begin{equation}\label{ineq:sigma_p-1sigma_p}
	\sigma_{p_1-1}^2 +\sigma_{p_1}^2 \leq \frac{2}{p_1}\sum_{i=1}^{p_1} \sigma_i^2 = \frac{2}{p_1}\sigma_C^2 \leq \frac{2\sigma_C^2}{2q} \leq \frac{2\sigma_C^2}{4\lceil\sigma_C^2\rceil}\leq 1 = \sigma_{\ast}^2.
	\end{equation}
	By Lemma \ref{lm:ZZtop-tildeZZtop}, we have
	\begin{equation*}
	\tr\left((ZZ^\top - \mathbb{E}ZZ^\top)^q\right) \leq \tr\left((\tilde{Z}\tilde{Z}^\top - \mathbb{E}\tilde{Z}\tilde{Z}^\top)^q\right).
	\end{equation*}
	where $\tilde{Z}$ is a $(p_1-1)$-by-$p_2$ random matrix with independent entries and
	$$\mathbb{E}(\tilde{Z}) = 0,\quad \Var((\tilde{Z})_{ij}) = \left\{\begin{array}{ll}
	\sigma_i^2, & \text{ if } 1\leq i \leq p_1-2;\\
	\sigma_{p-1}^2+\sigma_p^2, & \text{ if } 1\le i \leq p_1-1.\\
	\end{array}\right. $$
	By \eqref{ineq:sigma_p-1sigma_p}, we have $\max_{i,j} \Var((\tilde{Z})_{ij})\leq \sigma_{\ast}^2$. Meanwhile, $\sum_{i=1}^{p_1-1} \Var((\tilde{Z})_{ij}) = \sum_{i=1}^{p_1} \sigma_i^2 = \sigma_C^2$. Thus, the induction assumption of \eqref{ineq:induction} implies
	\begin{equation*}
	\begin{split}
	\left(\mathbb{E}\left\{(ZZ^\top - \mathbb{E}ZZ^\top)^q\right\}\right)^{1/q} \leq \left(\mathbb{E}\left\{(\tilde{Z}\tilde{Z}^\top - \mathbb{E}\tilde{Z}\tilde{Z}^\top)^q\right\}\right)^{1/q} \leq C\left(\sqrt{p_2}\sigma_C + \sigma_C^2\right).
	\end{split}
	\end{equation*}
\end{itemize}
By induction, we have proved that \eqref{ineq:induction} holds in general. Therefore,
\begin{equation*}
\begin{split}
\mathbb{E}\left\|ZZ^\top - \mathbb{E}ZZ^\top \right\| \leq & \left(\mathbb{E}\tr\left\{\left(ZZ^\top - \mathbb{E}ZZ^\top\right)^q\right\}\right)^{1/q} \lesssim \sqrt{p_2}\sigma_C + \sigma_C^2.
\end{split}
\end{equation*}
\end{proof}

\subsection{Wishart matrix with near-homoskedastic columns}\label{sec:homoskedastic columns}

Let $Z\in \mathbb{R}^{p_1\times p_2}$ be a random matrix with independent entries. We consider another case of interest that all entries in each column of $Z$ have the similar variance (i.e., there exist $\sigma_j$ such that $\sigma_{ij} \approx  \sigma_j^2$, $\forall i, i' \in [p_1]$, $\forall j \in [p_2]$). This model has been used to characterize heteroskedastic independent samples in statistical applications \cite{hong2018asymptotic}. Applying Theorem \ref{th:wishart-Gaussian}, one obtains
\begin{equation}\label{ineq:row-wise-nonsharp}
\begin{split}
\mathbb{E}\left\|ZZ^\top - \mathbb{E}ZZ^\top\right\| \lesssim \sqrt{p_1\sum_j\sigma_j^2}\max_j\sigma_j + p_1\max_j\sigma_j^2. 
\end{split}
\end{equation} 
As the direct upper bound of \eqref{ineq:row-wise-nonsharp} may be sub-optimal, we prove the following upper and lower bounds via a more careful analysis.
\begin{theorem}\label{th:row-wise-concentration}
	Suppose $Z\in \mathbb{R}^{p_1\times p_2}$ has independent, mean-zero, and sub-Gaussian entries. Assume there exist $\sigma_1, \ldots, \sigma_n \geq 0$ such that $\|Z_{ij}/\sigma_j\|_{\psi_2}\leq C_K$ for constant $C_K>0$. Then,
	\begin{equation}\label{ineq:row-wise-concentration-target}
	\mathbb{E}\left\|ZZ^\top - \mathbb{E}ZZ^\top \right\| \lesssim \sqrt{p_1\sum_j \sigma_j^4} + p_1\max_j\sigma_j^2.
	\end{equation}
\end{theorem}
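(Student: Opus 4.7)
The plan is to reduce the spectral-norm bound to a scalar sub-exponential concentration inequality via an $\varepsilon$-net argument; column-wise independence of $Z$ will then permit a direct application of Bernstein's inequality, producing the variance proxy $\sum_j \sigma_j^4$ rather than the cruder $\max_j\sigma_j^2 \cdot \sum_j \sigma_j^2$ that \eqref{ineq:row-wise-nonsharp} delivers. Since $ZZ^\top-\mathbb{E}ZZ^\top$ is symmetric, a standard $1/4$-net $\mathcal{N}\subset S^{p_1-1}$ of cardinality $|\mathcal{N}|\leq 9^{p_1}$ gives $\|ZZ^\top-\mathbb{E}ZZ^\top\|\leq 2\max_{u\in\mathcal{N}}|u^\top(ZZ^\top-\mathbb{E}ZZ^\top)u|$, so it suffices to control this supremum.

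For fixed $u\in S^{p_1-1}$, set $Y_j(u) := \sum_{i=1}^{p_1} u_i Z_{ij}$, so that
\[ u^\top(ZZ^\top-\mathbb{E}ZZ^\top)u = \sum_{j=1}^{p_2}\bigl(Y_j(u)^2-\mathbb{E}Y_j(u)^2\bigr). \]
Column-wise independence of $Z$ makes $\{Y_j(u)\}_{j=1}^{p_2}$ independent, and the standard rotational bound $\|\sum_i a_i X_i\|_{\psi_2}^2 \lesssim \sum_i a_i^2 \|X_i\|_{\psi_2}^2$ for independent centered sub-Gaussians, together with the hypothesis $\|Z_{ij}/\sigma_j\|_{\psi_2}\leq C_K$, gives $\|Y_j(u)\|_{\psi_2}\lesssim \sigma_j \|u\|_2 = \sigma_j$. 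Consequently each summand $Y_j(u)^2-\mathbb{E}Y_j(u)^2$ is centered sub-exponential with $\|Y_j(u)^2-\mathbb{E}Y_j(u)^2\|_{\psi_1}\lesssim \sigma_j^2$, and Bernstein's inequality for independent sub-exponential sums yields, for every $x>0$,
\[ \mathbb{P}\Bigl(\bigl|u^\top(ZZ^\top-\mathbb{E}ZZ^\top)u\bigr|\geq C_0\bigl(\sqrt{Vx}+Wx\bigr)\Bigr)\leq 2e^{-x}, \]
where $V\asymp \sum_j\sigma_j^4$, $W\asymp \max_j \sigma_j^2$, and $C_0$ is an absolute constant.

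Taking $x = (\log 9)p_1 + s$ and union-bounding over $\mathcal{N}$ then produces
\[ \mathbb{P}\Bigl(\|ZZ^\top-\mathbb{E}ZZ^\top\|\geq 2C_0\bigl(\sqrt{V((\log 9)p_1 + s)}+W((\log 9)p_1 + s)\bigr)\Bigr)\leq 2e^{-s}, \]
and integrating the tail in $s$ delivers the target bound $\mathbb{E}\|ZZ^\top-\mathbb{E}ZZ^\top\|\lesssim \sqrt{p_1\sum_j\sigma_j^4}+p_1\max_j\sigma_j^2$. The only delicate point is the rotational sub-Gaussian step and the resulting $\psi_1$-bound: it is essential that the multiplicative constant there depend only on $C_K$ and not on $p_1, p_2$ or the ratios of the $\sigma_j$'s, because this is exactly what ensures the variance proxy $V$ is controlled by $\sum_j\sigma_j^4$ rather than being inflated to $\max_j\sigma_j^2 \sum_j\sigma_j^2$. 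The rest is a routine chaining-free net-plus-Bernstein scheme, and unlike the proof of Theorem \ref{th:column-wise-concentration} there is no need for a variance-contraction tool analogous to Lemma \ref{lm:ZZtop-tildeZZtop}.
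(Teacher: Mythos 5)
Your proof is correct, and it takes a genuinely different and more elementary route than the paper. The paper's proof of this theorem first splits $ZZ^\top-\mathbb{E}ZZ^\top$ into its diagonal and off-diagonal parts, handles the diagonal by Bernstein plus a union bound, and then controls $\bbE\|\Delta(ZZ^\top)\|$ via the moment method using a custom comparison (Lemma \ref{lm:diagonal-deletion-comparison}) against a standard Wishart matrix of reduced width $m=\lceil\sum_j\sigma_j^4\rceil+q-1$, taking $q\asymp p_1$. You instead run a single $1/4$-net argument on $S^{p_1-1}$ combined with scalar Bernstein. The decisive reason this succeeds is that, because the variance profile is constant within each column, the sub-exponential parameters of $Y_j(u)^2-\bbE Y_j(u)^2$ are \emph{uniform} in $u$: $\|Y_j(u)\|_{\psi_2}\lesssim\sigma_j\|u\|_2=\sigma_j$ regardless of the direction $u$. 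This makes $V\asymp\sum_j\sigma_j^4$ and $W\asymp\max_j\sigma_j^2$ independent of $u$, so the union bound over the $9^{p_1}$-point net pays exactly the $\sqrt{p_1}$ and $p_1$ multipliers that appear in the target rate. The paper's moment-method route is heavier but fits more naturally into the framework of the other theorems (and avoids reproving Bernstein-type scalar facts from scratch).

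One caveat worth noting: this net-plus-Bernstein scheme is specific to the homoskedastic-\emph{columns} structure of Theorem \ref{th:row-wise-concentration}. If you tried the same thing in the homoskedastic-\emph{rows} setting of Theorem \ref{th:column-wise-concentration}, the quantity $\|Y_j(u)\|_{\psi_2}^2\lesssim\sum_i u_i^2\sigma_i^2$ would vary with $u$ (from $\sum_i\sigma_i^2/p_1$ at a flat direction up to $\max_i\sigma_i^2$ at a coordinate direction), and a uniform union bound forces the worst case, landing you at $\sigma_*^2\sqrt{p_1p_2}+\sigma_*^2p_1$ rather than the correct $\sum_i\sigma_i^2+\sqrt{p_2\sum_i\sigma_i^2}\max_i\sigma_i$. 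That is exactly why the paper needs the variance-contraction Lemma \ref{lm:ZZtop-tildeZZtop} in that theorem. Also, a small misattribution in your last sentence: the paper's proof of the \emph{present} theorem does not rely on Lemma \ref{lm:ZZtop-tildeZZtop} at all; the tool it uses here is Lemma \ref{lm:diagonal-deletion-comparison} (the diagonal-deletion moment comparison), which your argument also avoids.
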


\begin{theorem}\label{th:column-wise-concentration-lower}
	If $Z\in \mathbb{R}^{p_1\times p_2}$, $Z_{ij} \overset{ind}{\sim} N(0, \sigma_j^2)$, we have
	\begin{equation*}
	\mathbb{E}\left\|ZZ^\top -\mathbb{E}ZZ^\top \right\| \gtrsim\sqrt{p_1\sum_{j} \sigma_j^4} + p_1\max_j\sigma_j^2.
	\end{equation*}
\end{theorem}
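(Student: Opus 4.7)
The plan is to lower bound $\mathbb{E}\|A\|$ with $A:=ZZ^\top-\mathbb{E}ZZ^\top=\sum_{j=1}^{p_2}(Z_{\cdot j}Z_{\cdot j}^\top-\sigma_j^2 I_{p_1})$ by the two terms $p_1\max_j\sigma_j^2$ and $\sqrt{p_1\sum_j\sigma_j^4}$ separately; since each column $Z_{\cdot j}$ has isotropic covariance, $\mathbb{E}ZZ^\top=(\sum_j\sigma_j^2)I_{p_1}$, which simplifies all subsequent centering.

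First I would handle the $p_1\max_j\sigma_j^2$ piece by exhibiting a good test direction. Let $j^\ast:=\argmax_j\sigma_j^2$ and take $u:=Z_{\cdot j^\ast}/\|Z_{\cdot j^\ast}\|_2$. Because $u$ depends only on $Z_{\cdot j^\ast}$ and the columns of $Z$ are independent, conditioning on $Z_{\cdot j^\ast}$ gives $u^\top Z_{\cdot j}\sim N(0,\sigma_j^2)$ for $j\neq j^\ast$, while $u^\top Z_{\cdot j^\ast}=\|Z_{\cdot j^\ast}\|_2$. Expanding $u^\top ZZ^\top u=\sum_j(u^\top Z_{\cdot j})^2$ and using $u^\top\mathbb{E}ZZ^\top u=\sum_j\sigma_j^2$ yields $\mathbb{E}[u^\top A u]=p_1\sigma_{j^\ast}^2+\sum_{j\neq j^\ast}\sigma_j^2-\sum_j\sigma_j^2=(p_1-1)\sigma_{j^\ast}^2$. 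Since $\|A\|\geq|u^\top A u|$ holds pointwise for symmetric $A$, Jensen's inequality then gives $\mathbb{E}\|A\|\geq(p_1-1)\max_j\sigma_j^2\gtrsim p_1\max_j\sigma_j^2$.

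For the second piece I plan to apply a Paley--Zygmund argument to the squared Frobenius norm. A short variance calculation separating diagonal and off-diagonal entries of $A$ gives $\mathbb{E}\|A\|_F^2=(p_1^2+p_1)\sum_j\sigma_j^4\asymp p_1^2\sum_j\sigma_j^4$. The crucial ingredient is the fourth-moment bound $\mathbb{E}\|A\|_F^4\lesssim(\mathbb{E}\|A\|_F^2)^2$; expanding $\|A\|_F^4=\sum_{(i,i'),(k,k')}A_{ii'}^2 A_{kk'}^2$, I would split the sum according to the overlap of $\{i,i'\}$ and $\{k,k'\}$: when the two pairs are disjoint (the bulk, $\asymp p_1^4$ tuples) independence factors the expectation, while the $O(p_1^3)$ overlapping tuples are controlled by Cauchy--Schwarz together with the per-entry estimate $\mathbb{E}A_{ii'}^4\lesssim(\sum_j\sigma_j^4)^2$, which follows from direct expansion of the independent-product sum $A_{ii'}=\sum_j Z_{ij}Z_{i'j}$. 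Paley--Zygmund then produces a constant-probability event on which $\|A\|_F^2\geq\tfrac12\mathbb{E}\|A\|_F^2$, and the rank bound $\|A\|^2\geq\|A\|_F^2/p_1$ for the $p_1\times p_1$ symmetric matrix $A$ forces $\|A\|\gtrsim\sqrt{p_1\sum_j\sigma_j^4}$ on that event, hence in expectation.

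The main obstacle is the fourth-moment estimate $\mathbb{E}\|A\|_F^4\lesssim(\mathbb{E}\|A\|_F^2)^2$: one must enumerate the overlap patterns of $(i,i',k,k')$ and verify in every case that the mixed fourth moments of the products-of-Gaussians entries $A_{ii'}$ stay at the $(\sum_j\sigma_j^4)^2$ scale, so that overlapping configurations contribute only a lower-order correction to the Paley--Zygmund denominator. Adding the bounds from the two steps produces the claimed inequality.
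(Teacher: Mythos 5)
Your proposal is correct, and it takes a genuinely different route from the paper.

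The paper's proof invokes the lower bound of Tropp \cite{tropp2016expected} for the expected spectral norm of a sum of independent random matrices. Writing $A := ZZ^\top-\mathbb{E}ZZ^\top = \sum_{k=1}^{p_2} S_k$ with $S_k = Z_{\cdot k}Z_{\cdot k}^\top - \mathbb{E}Z_{\cdot k}Z_{\cdot k}^\top$, that bound produces
\begin{equation*}
\mathbb{E}\|A\| \gtrsim \Big\|\mathbb{E}\textstyle\sum_k S_kS_k^\top\Big\|^{1/2} + \mathbb{E}\max_k\|S_k\|,
\end{equation*}
and the two displayed quantities are then computed directly: the matrix-variance term equals $(p_1+1)\sum_k\sigma_k^4$ and the second term is $\geq (p_1-1)\sigma_*^2$.

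Your argument avoids Tropp's matrix concentration machinery entirely and is self-contained. For the $p_1\max_j\sigma_j^2$ piece you use a random test direction $u = Z_{\cdot j^\ast}/\|Z_{\cdot j^\ast}\|_2$ and the pointwise bound $\|A\|\geq u^\top A u$; the conditional independence of columns makes $\mathbb{E}[u^\top A u]=(p_1-1)\sigma_{j^\ast}^2$ a short computation. For the $\sqrt{p_1\sum_j\sigma_j^4}$ piece you combine the second-moment identity $\mathbb{E}\|A\|_F^2=(p_1^2+p_1)\sum_j\sigma_j^4$, a fourth-moment bound $\mathbb{E}\|A\|_F^4\lesssim(\mathbb{E}\|A\|_F^2)^2$ (the overlap count being $O(p_1^3)$ tuples against $\Theta(p_1^4)$ in the denominator, with $\mathbb{E}A_{ii'}^4\lesssim(\sum_j\sigma_j^4)^2$ per entry -- all of which check out for Gaussian entries), Paley--Zygmund, and the rank inequality $\|A\|^2\geq\|A\|_F^2/p_1$. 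In short: the paper's route is shorter once you import Tropp's theorem, while yours is elementary and makes visible exactly which moment structure drives each term. Both are correct and yield the claimed two-term lower bound.
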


The proof of Theorem \ref{th:column-wise-concentration-lower} is deferred to Section \ref{sec:proof-homo-column}. 
Now we consider the proof of Theorem \ref{th:row-wise-concentration}. Since the Gaussian comparison lemma (Lemma \ref{lm:Gaussian-comparison}) cannot give the desired term $\sum_{j=1}^{p_2}\sigma_j^4$, we turn to study the expansion of $\bbE\tr\left\{\left(\Delta(ZZ^\top)\right)^q\right\}$, where $\Delta(ZZ^\top)$ equals to $ZZ^\top$ with all diagonal entries set to zero. The expansion of $\bbE\tr\left\{\left(\Delta(ZZ^\top)\right)^q\right\}$ can be related to the cycles in a complete graph for which every edge is visited $\{0, 4, 8, 12 \ldots\}$ times. Based on this new idea, we introduce the following lemma.  
\begin{lemma}\label{lm:diagonal-deletion-comparison}
	Suppose $Z\in \mathbb{R}^{p_1\times p_2}$, $Z_{ij}\overset{ind}{\sim} N(0, \sigma_{ij}^2)$, and $\sigma_{ij} \leq \sigma_j$. For a square matrix $A$, let $\Delta(A)$ be $A$ with all diagonal entries set to zero and $D(A)$ be $A$ with all off-diagonal entries set to zero. For any integer $q \geq 1$, suppose $H\in \mathbb{R}^{p_1\times m}$ have i.i.d. standard normal entries and $m = \lceil \sum_{j=1}^{p_2} \sigma_j^4\rceil + q -1 $. Then,
	\begin{equation}\label{ineq:home-comparison}
	\bbE\tr\left\{\left(\Delta(ZZ^\top)\right)^q\right\} \leq \bbE\tr\left\{\left(\Delta(HH^\top)\right)^q\right\}.
\end{equation}
\end{lemma}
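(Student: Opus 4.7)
The plan is to follow the combinatorial moment-method scheme of Lemma~\ref{lm:Gaussian-comparison}, adapted to the diagonal-deleted product and exploiting a parity consequence of the constraint $u_k\neq u_{k+1}$. First I would expand
\begin{equation*}
\bbE\tr\bigl\{(\Delta(ZZ^\top))^q\bigr\} = \sum_{\substack{u_1,\ldots,u_q\in[p_1]\\ u_k\neq u_{k+1}\ \forall k}}\ \sum_{v_1,\ldots,v_q\in[p_2]} \bbE\prod_{k=1}^q Z_{u_k v_k}Z_{u_{k+1}v_k}
\end{equation*}
with indices read cyclically ($u_{q+1}:=u_1$), and view each surviving term as a closed walk of length $2q$ that alternates between $[p_1]$ and $[p_2]$ in the complete bipartite graph. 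By independence of the $Z_{ij}$ and Isserlis' theorem, only tuples for which every bipartite cell $(u,v)$ appears an even number of times contribute, and the corresponding expectation factors as $\prod_{(u,v)}(2k_{uv}{-}1)!!\,\sigma_{uv}^{2k_{uv}}$ for some non-negative integers $k_{uv}$ with $\sum_{(u,v)}k_{uv}=q$.

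The crucial new ingredient is the following parity argument: if a $[p_2]$-vertex $v$ is visited only once in the walk, say at step $k_0$, then its two incident bipartite edges $(u_{k_0},v)$ and $(u_{k_0+1},v)$ are distinct (since $u_{k_0}\neq u_{k_0+1}$) and each appears with odd multiplicity, so the Gaussian expectation vanishes. Hence every used $[p_2]$-vertex in a nonvanishing walk is visited at least twice, and the paired visits at $v$ produce factors of order $\sigma_v^4$ rather than $\sigma_v^2$. This is precisely the "edges visited $\{0,4,8,\ldots\}$ times" phenomenon flagged in the paper before the lemma statement, and it is the combinatorial mechanism that brings out $\sum_j\sigma_j^4$ in place of the $\sum_j\sigma_j^2=\sigma_R^2$ that appears in Lemma~\ref{lm:Gaussian-comparison}.

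Next I would group surviving tuples by \emph{shape} $\mathbf{s}$, meaning the isomorphism class of the bipartite multigraph together with its traversal order and Wick pairing. To each shape I attach the numbers $\gamma_L(\mathbf s)$ and $\gamma_R(\mathbf s)$ of distinct $[p_1]$- and $[p_2]$-vertices used, the visit-counts $a_i(\mathbf s)\ge 2$ at each distinct $v$-vertex, and a Wick multiplicity $N(\mathbf s)$ depending only on the shape. Using $\sigma_{ij}\le\sigma_j$, the shape-$\mathbf s$ contribution to the heteroskedastic side is bounded by
\begin{equation*}
N(\mathbf s)\cdot p_1(p_1{-}1)\cdots(p_1{-}\gamma_L(\mathbf s)+1)\cdot \sum_{\substack{v_1,\ldots,v_{\gamma_R(\mathbf s)}\in[p_2]\\\text{distinct}}}\prod_{i=1}^{\gamma_R(\mathbf s)}\sigma_{v_i}^{2a_i(\mathbf s)},
\end{equation*}
while the identical shape contributes exactly $N(\mathbf s)\cdot p_1(p_1{-}1)\cdots(p_1{-}\gamma_L(\mathbf s)+1)\cdot m(m{-}1)\cdots(m{-}\gamma_R(\mathbf s)+1)$ to $\bbE\tr\{(\Delta(HH^\top))^q\}$. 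The lemma then reduces to the shape-by-shape arithmetic inequality
\begin{equation*}
\sum_{v_1,\ldots,v_{\gamma_R}\text{ distinct}}\prod_{i=1}^{\gamma_R(\mathbf s)}\sigma_{v_i}^{2a_i(\mathbf s)}\ \le\ m(m{-}1)\cdots(m{-}\gamma_R(\mathbf s)+1),
\end{equation*}
which, after absorbing the overall scale to normalize $\max_j\sigma_j\le 1$, follows from $\sigma_{v_i}^{2a_i}\le\sigma_{v_i}^4$ (using $a_i(\mathbf s)\ge 2$), the elementary bound $\sum_v\sigma_v^4\le\sum_j\sigma_j^4$, and the shift $m=\lceil\sum_j\sigma_j^4\rceil+q-1\ge \sum_j\sigma_j^4+\gamma_R(\mathbf s)-1$ (using $\gamma_R(\mathbf s)\le q$), exactly in the spirit of Lemma~\ref{lm:Gaussian-comparison}. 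Summing over shapes then yields \eqref{ineq:home-comparison}.

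The main obstacle I expect is the careful bookkeeping in the second step: one must formalize the bipartite-walk picture precisely enough to rigorously establish the "$a_i(\mathbf s)\ge 2$" parity consequence of $u_k\neq u_{k+1}$ for every nonvanishing shape, and verify that the combinatorial Wick multiplicities $N(\mathbf s)$ match term-by-term between the heteroskedastic and the homoskedastic expansions so that the shape-wise comparison is legitimate. Once these structural facts are pinned down, the final arithmetic reduction essentially mirrors the corresponding step in the proof of Lemma~\ref{lm:Gaussian-comparison} and should be routine.
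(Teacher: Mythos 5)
Your proposal is correct and follows essentially the same route as the paper's proof: the trace expansion over cyclic walks with $u_k\neq u_{k+1}$, the parity observation that every visited $[p_2]$-vertex must be visited at least twice (since its two incident edges at a single visit are distinct and would otherwise carry odd multiplicity), grouping by shape with a shape-dependent Gaussian moment factor, and the shape-wise arithmetic comparison $\sum_{\text{distinct}}\prod_i\sigma_{v_i}^{2a_i}\le(\sum_j\sigma_j^4)^{\gamma_R}\le(m-\gamma_R+1)^{\gamma_R}\le m(m-1)\cdots(m-\gamma_R+1)$ via the choice of $m$. The only thing worth flagging is that the bound $\sigma_{v_i}^{2a_i}\le\sigma_{v_i}^4$ does require the normalization $\max_j\sigma_j\le1$, which the paper also invokes implicitly; you noted this correctly.
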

The proof of Lemma \ref{lm:diagonal-deletion-comparison} is provided in Section \ref{sec:proof-homo-column}. Next, we prove Theorem \ref{th:row-wise-concentration}.
\begin{proof}[Proof of Theorem \ref{th:row-wise-concentration}]
Denote $\sigma_R^2 = \sum_j \sigma_j^2$, $\sigma_{\ast} = \max_i \sigma_i$. Without loss of generality, we assume $\sigma_* = 1$. Note that $\bbE \left\|ZZ^\top - \bbE ZZ^\top\right\| \leq \bbE\left\|D(ZZ^\top) - \bbE ZZ^\top\right\| + \bbE\left\|\Delta(ZZ^\top)\right\|.$
It suffices to bound the two terms separately. Since $D(ZZ^\top) - \bbE ZZ^\top$ is a diagonal matrix with independent diagonal entries, we have 
\begin{equation*}
	\left\|D(ZZ^\top) - \bbE ZZ^\top\right\| = \max_{i \in [p_1]}\left|\sum_{j=1}^{p_2}Z_{ij}^2 - \bbE \sum_{j=1}^{p_2}Z_{ij}^2\right|.
\end{equation*}
With Bernstein inequality and union bound, we have 
\begin{equation*}
	\bbP\left(\max_{i\in [p_1]}\left|\sum_{j=1}^{p_2}Z_{ij}^2 - \bbE \sum_{j=1}^{p_2}Z_{ij}^2\right| > t\right) \leq 2\exp\left(\log p_1-c\left(\frac{t^2}{\sum_{j=1}^{p_2} \sigma_j^4} \wedge \frac{t}{\sigma_*^2}\right)\right).
\end{equation*}
Integration over the tail further yields
\begin{equation}\label{ineq:home-row-target1}
	\bbE \max_{i\in [p_1]}\left|\sum_{j=1}^{p_2}Z_{ij}^2 - \bbE \sum_{j=1}^{p_2}Z_{ij}^2\right| \lesssim \sqrt{\log p_1 \sum_{j=1}^{p_2} \sigma_j^4} + \sigma_*^2 \log p_1.
\end{equation}

Next, we use moment method to bound $\bbE\left\|\Delta(ZZ^\top)\right\|$.  For any even positive integer $q$, by Lemma \ref{lm:diagonal-deletion-comparison},
\begin{equation}\label{ineq:home-row-1}
	\bbE\left\|\Delta(ZZ^\top)\right\| \leq \left(\bbE\tr\left\{\left(\Delta(ZZ^\top)\right)^q\right\}\right)^{1/q} \leq \left(\bbE\tr\left\{\left(\Delta(HH^\top)\right)^q\right\}\right)^{1/q}.
\end{equation}
Here $H$ is a $p_1$-by-$m$ random matrix with i.i.d. $N(0,1)$ entries and $m = \lceil \sum_{j=1}^{p_2} \sigma_j^4\rceil + q -1  $. Thus it suffices to bound $\left(\bbE\tr\left\{\left(\Delta(HH^\top)\right)^q\right\}\right)^{1/q}$.

On the one hand, by Lemma \ref{lm:iid-Gaussian-moment}, $\forall q \geq 2$,
\begin{equation}\label{ineq:home-row-2}
	\left(\mathbb{E}\|HH^\top - \mathbb{E}HH^\top\|^{q}\right)^{1/q} \leq 2\sqrt{p_1m} + m + 4(\sqrt{p_1}+\sqrt{m})\sqrt{q} + 2q.
\end{equation}
On the other hand, note that $\left\|D(HH^\top) - \bbE HH^\top\right\| = \max_{i \in [m]}|X_i|$, where $X_i$ are independent centralized $\chi^2_{m}$ random variable. By the Chi-square concentration and union bound, we have
\begin{equation*}
	\begin{split}
		\bbP\left(\max_{i \in [p_1]}|X_i|^q > t\right) \leq 2\exp\left(\log p_1 - c\left(\frac{t^{2/q}}{m} \wedge t^{1/q}\right)\right).
	\end{split}
\end{equation*}
Integration gives
\begin{equation}\label{ineq:home-row-3}
	\bbE \max_{i \in [p_1]}|X_i|^q \leq C^q \left(\log^q p_1 + (\sqrt{m\log p_1})^{q} \right).
\end{equation}
Then it follows that
\begin{equation}\label{ineq:home-row-4}
	\begin{split}
	& \left(\bbE\tr\left\{\left(\Delta(HH^\top)\right)^q\right\}\right)^{1/q} \leq \left(p_1\bbE\left\|\Delta(HH^\top)\right\|^q\right)^{1/q} \\
	\leq &  p_1^{1/q}\left(\bbE\left\|HH^\top - \bbE HH^\top\right\|^q\right)^{1/q} + \left(\bbE\left\|D(HH^\top - \bbE HH^\top)\right\|^q\right)^{1/q}\\
	\overset{\eqref{ineq:home-row-2}\eqref{ineq:home-row-3}}{\lesssim} & p_1^{1/q}\cdot \left(\sqrt{p_1m} + p_1 + 4(\sqrt{p_1}+\sqrt{m})\sqrt{q} + 2q\right).
	\end{split}
\end{equation}
 Now we specify $q = 2p_1$ and get
 \begin{equation*}
 	\bbE\left\|\Delta(ZZ^\top)\right\| \overset{\eqref{ineq:home-row-1}}{\leq} \left(\bbE\tr\left\{\left(\Delta(HH^\top)\right)^q\right\}\right)^{1/q} \lesssim \sqrt{p_1\sum_{j=1}^n \sigma_j^4} + p_1.
 \end{equation*}
This together with \eqref{ineq:home-row-target1} completes the proof of this theorem.
\end{proof}

\section{Applications}\label{sec:application}

The concentration bounds established in the previous sections have a range of applications. 
In this section, we illustrate the usefulness of the heteroskedastic Wishart-type concentration by applications to low-rank matrix denoising and  heteroskedastic clustering. 

Consider the following ``signal + noise" model:
\begin{equation*}
	Y = X + Z,
\end{equation*}
where $X \in \bbR^{p_1\times p_2}$ is a (approximately) low-rank matrix of interest, $Z$ is the random noise with independent entries, and $Y$ is the observation. This model has attracted significant attention in probability and statistics \cite{bao2018singular,benaych2012singular,donoho2014minimax,shabalin2013reconstruction}, and has also been the prototypical setting in various applications, such as bipartite stochastic block model \cite{florescu2016spectral}, exponential family PCA \cite{liu2016pca}, top-$k$ ranking from pairwise comparison \cite{negahban2017rank}. In these applications, the leading singular values/vectors of $X$ often contain information of interest. A straightforward way to estimate the leading singular values/vectors of $X$ (which are also the square root eigenvalues and the eigenvectors of $XX^\top$) is by evaluating the spectrum of $Y$ (or equivalently $YY^\top$). Suppose $\lambda_i(YY^\top), \lambda_i(XX^\top), v_i(YY^\top), v_i(YY^\top)$ are the $i$th eigenvalue and $i$th eigenvector of $YY^\top, XX^\top$, respectively. The classic perturbation theory (e.g., Weyl \cite{weyl1912asymptotische} and David-Kahan \cite{davis1970rotation}) yield the following sharp bounds,
\begin{equation*}
|\lambda_i(YY^\top) - \lambda_i (XX^\top)| \leq \|YY^\top - XX^\top\|,
\end{equation*}
\begin{equation*}
\|v_i(YY^\top) \pm v_i(YY^\top)\|_2 \lesssim \frac{\|YY^\top-XX^\top\|}{\min_{j=i, i+1}\{\lambda_{j-1}(XX^\top) - \lambda_j(XX^\top)\}}.
\end{equation*}
Then, a tight upper bound for the perturbation $YY^\top - XX^\top$ is critical to quantify the estimation accuracy of $\lambda_i(YY^\top)$, $v_i(YY^\top)$ to $\lambda_i(XX^\top)$, $v_i(XX^\top)$. By expansion, the perturbation of $YY^\top - XX^\top$ can be written as 
\begin{equation}\label{eq:gram-decompose}
	YY^\top - XX^\top = XZ^\top + ZX^\top + \mathbb{E}ZZ^\top +  (ZZ^\top - \bbE ZZ^\top).
\end{equation}
Here, $\bbE ZZ^\top$ is a deterministic diagonal matrix; $\|XZ^\top\| = \|ZX^\top\|$ are the spectral norm of a random matrix multiplied by a deterministic matrix, which has been considered in \cite{vershynin2011spectral}; The term $\|ZZ^\top - ZZ^\top\|$ can often be the dominating and most complicated part in \eqref{eq:gram-decompose} and the heteroskedastic Wishart-type concentration inequality established in the present paper provides a powerful tool for analyzing it.

We further illustrate through a specific application to high-dimensional heteroskedastic clustering. The clustering is an ubiquitous task in statistics and machine learning \cite{hastie2009elements}. Suppose we observe a two-component Gaussian mixture:
\begin{equation}\label{eq:clustering-model}
	Y_j = l_j \mu + \varepsilon_{j},\qquad \varepsilon_{j} = (\varepsilon_{1j},\ldots, \varepsilon_{pj})^\top, \quad \varepsilon_{ij} \overset{ind}{\sim} N(0,\sigma_i^2 ),\qquad j =1,\ldots,n.
\end{equation}
Here, $\mu$ is an unknown deterministic vector in $\bbR^{p}$ and $l_j \in \{-1, 1\}$ are unknown labels of two classes. While most existing works focus on the homoskedastic setting, we consider a heteroskedastic setting where the noise variance $\sigma_i^2$ may vary across different coordinates. Then, the sample $\{Y_j\}_{j=1}^n$ can be written in a matrix form, $Y = X + Z$, where 
$$Y = \left[Y_1^\top, Y_2^\top, \cdots, Y_n^\top\right]^\top, \quad X = [l_1,l_2,\cdots,l_n]^\top \mu,\quad \text{and} \quad Z = (\varepsilon_{ij}).$$
Our goal is to cluster $\{Y_j\}_{j=1}^n$ into two groups, or equivalently to estimate the hidden label $\{l_j\}_{j=1}^n$. Let $\hat v$ be the first eigenvector of $Y Y^\top$. As $\hat{v}$ is an estimation of $l$, it is straightforward to cluster as
\begin{equation}\label{eq:clustering-est}
	\hat l_j = \text{sgn}(\hat v_j), \quad j=1,\ldots, n.
\end{equation}
Applying Theorem \ref{th:row-wise-concentration} and perturbation bound of $\|XZ^\top\|$ \cite[Lemma 3]{zhang2018heteroskedastic} on \eqref{eq:gram-decompose}, it can be shown that
\begin{equation*}
	\bbE \left\|YY^\top - \bbE ZZ^\top - XX^\top\right\| \lesssim n\left\|\mu\right\|\sigma_* + n \sigma_*^2 + \sqrt{n\sum_j \sigma_j^4}. 
\end{equation*}
Combining this with the Davis-Kahan Theorem \cite{davis1970rotation}, we obtain the following result.
\begin{theorem}\label{th:hetero-clustering}
	Let $\sigma_* = \max_i \sigma_i$ and $\tilde{\sigma} = (\sum_i \sigma_i^4)^{1/4}$. The estimator in \eqref{eq:clustering-est} satisfies
	\begin{equation}\label{eq:clustering-bound}
		\bbE \mathcal M(l,\hat l) \lesssim \frac{n\left\|\mu\right\|_2  \sigma_* + n\sigma_*^2 + \sqrt{n}\tilde{\sigma}^2 }{n\left\|\mu\right\|_2^2} \wedge 1.
	\end{equation}
	Here, $\mathcal M(l,\hat l)$ is the misclassification rate defined as 
	\begin{equation}\label{eq:def-misclassify}
	\mathcal M(l,\hat l) = \frac{1}{n}\min\left\{\sum_{i=1}^n 1_{\{l_i \neq \hat l_i\}}, ~ \sum_{i=1}^n 1_{\{l_i \neq -\hat l_i\}}\right\}.
	\end{equation}
\end{theorem}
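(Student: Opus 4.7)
The plan is to turn the heteroskedastic Wishart-type concentration (Theorem~\ref{th:row-wise-concentration}) into a deviation bound for $YY^\top$, then apply Davis-Kahan, and finally translate the eigenvector error into misclassification via the fact that the target eigenvector has entries of uniform magnitude $1/\sqrt{n}$. Throughout, I view $Y,X,Z$ as $n\times p$ matrices: $X=l\mu^\top$ is rank one with $XX^\top=\|\mu\|_2^2\, ll^\top$, whose top eigenvalue is $n\|\mu\|_2^2$ with eigengap $n\|\mu\|_2^2$ to the zero eigenspace, and whose unit top eigenvector is $v:=l/\sqrt{n}$. Note that $Z$ has i.i.d. rows and homoskedastic-column structure, with $\sigma_j^2$ being the variance of column $j$.

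The first step is to control the perturbation $E:=YY^\top - XX^\top - \bbE ZZ^\top$. A direct computation shows $\bbE ZZ^\top = (\sum_i \sigma_i^2)I_n$, so it does not affect eigenvectors of $YY^\top$. From the decomposition \eqref{eq:gram-decompose},
\begin{equation*}
\|E\|\leq 2\|XZ^\top\| + \|ZZ^\top - \bbE ZZ^\top\|.
\end{equation*}
For the cross term, $XZ^\top = l(Z\mu)^\top$ is rank one with norm $\sqrt{n}\,\|Z\mu\|_2$, and each coordinate of $Z\mu$ is Gaussian with variance $\sum_i \mu_i^2\sigma_i^2\leq \|\mu\|_2^2\sigma_*^2$, yielding $\bbE\|XZ^\top\|\leq n\|\mu\|_2\sigma_*$ by Jensen's inequality. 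For the Wishart-type term, Theorem~\ref{th:row-wise-concentration} applied to the $n\times p$ matrix $Z$ with homoskedastic-column variances $\sigma_j^2$ gives
\begin{equation*}
\bbE\|ZZ^\top - \bbE ZZ^\top\|\lesssim \sqrt{n\sum_j\sigma_j^4}+n\sigma_*^2 = \sqrt{n}\,\tilde\sigma^2+n\sigma_*^2.
\end{equation*}
Combining these yields $\bbE\|E\|\lesssim n\|\mu\|_2\sigma_* + n\sigma_*^2 + \sqrt{n}\,\tilde\sigma^2$.

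The second step invokes Davis-Kahan. Since $XX^\top$ has eigengap $n\|\mu\|_2^2$, a standard sine-$\Theta$ bound combined with the trivial bound $\sin\Theta(\hat v,v)\leq 1$ yields $\sin\Theta(\hat v,v)\lesssim \|E\|/(n\|\mu\|_2^2)\wedge 1$. Choosing the sign $s\in\{\pm 1\}$ to align $\hat v$ with $v$, we get $\|\hat v - sv\|_2^2\leq 2\sin^2\Theta(\hat v,v)$.

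The final step converts eigenvector error to misclassification. Because $|v_j|=1/\sqrt{n}$ for every $j$, if $\mathrm{sgn}(\hat v_j)\neq sl_j$ then $|\hat v_j - sv_j|\geq 1/\sqrt{n}$, so
\begin{equation*}
\mathcal M(l,\hat l)\leq \|\hat v - sv\|_2^2\leq 2\sin^2\Theta(\hat v,v),
\end{equation*}
and also $\mathcal M(l,\hat l)\leq 1$ trivially. Using $\sin^2\Theta\leq \sin\Theta$ in the regime $\sin\Theta\leq 1$, we obtain $\mathcal M(l,\hat l)\leq 2\sin\Theta(\hat v,v)\wedge 1$. Taking expectations and applying Jensen's inequality to the concave map $x\mapsto x\wedge 1$ gives $\bbE\mathcal M(l,\hat l)\lesssim \bbE\|E\|/(n\|\mu\|_2^2)\wedge 1$, which combined with the bound on $\bbE\|E\|$ from the first step yields the theorem. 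The main technical content lives in the first step: the naive bound from Corollary~\ref{cr:heteroskedastic-wishart-concentration} would introduce a superfluous $\sqrt{\log(n\wedge p)}$ factor on $\sqrt{n}\,\tilde\sigma^2$, which Theorem~\ref{th:row-wise-concentration} removes and is essential for obtaining the sharp signal-to-noise threshold.
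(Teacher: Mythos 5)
Your argument is correct and follows essentially the same route as the paper: the same decomposition of $YY^\top$, the same use of Theorem~\ref{th:row-wise-concentration} for the Wishart term, Davis--Kahan on the matrix $XX^\top+E$ (noting $\bbE ZZ^\top$ is a multiple of the identity), and the eigenvector-to-misclassification conversion, which you re-derive rather than citing Lemma~\ref{lm:hamming-loss}. One modest simplification: for the cross term you exploit the rank-one identity $XZ^\top = l(Z\mu)^\top$ together with Jensen to get $\bbE\|XZ^\top\|\le n\|\mu\|_2\sigma_*$ directly, which is cleaner and more self-contained than the paper's appeal to Lemma~\ref{lm:subgaussian-linear-bound}, and you also treat the $\wedge 1$ truncation (the case where the Davis--Kahan condition fails) more explicitly than the paper does.
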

The complete proof of Theorem \ref{th:hetero-clustering} is deferred to Section \ref{sec:proof-hetero-clustering}. By \eqref{eq:clustering-bound}, the clustering is consistent (i.e., $\bbE\mathcal M(l,\hat l) = o(1)$) as long as 
\begin{equation}\label{ineq:clustering-SNR}
	\left\|\mu\right\|_2 \gg \sigma_* \vee (\tilde{\sigma}/n^{1/4}).
\end{equation}
The following lower bound shows that the signal-noise-ration condition \eqref{ineq:clustering-SNR} is necessary to ensure a consistent classification. The proof is provided in Section \ref{sec:proof-hetero-clustering}.
\begin{theorem}\label{th:hetero-clustering-lower-bound}
	Suppose $\sigma_* \leq \tilde{\sigma} \leq p^{1/4}\sigma_*$. Consider the following class of distributions on $\bbR^{n \times p}$:
	\begin{equation*}
		\mathcal P_{l,\lambda}(\sigma_*, \tilde\sigma) = \left\{P_Y: Y = X + Z\in \mathbb{R}^{n\times p}: \begin{array}{ll}
	& X =  l\mu^\top, Z_{ij}\overset{ind}{\sim} N(0, \sigma_{j}^2), \\
	& \left\|\mu\right\| \geq \lambda, \max_{j}\sigma_{j}\leq \sigma_\ast, \sum_{i=1}^p\sigma_i^4 \leq \tilde\sigma^4
	\end{array}\right\}.
	\end{equation*}
	There exists a universal constant $c>0$, such that if $\lambda < c\left(\sigma_* \vee (\tilde\sigma/n^{1/4})\right)$, we have
	\begin{equation*}
		\inf_{\hat l} \sup_{\mathcal P_{l,\lambda}(\sigma_*,\tilde{\sigma})} \bbE\mathcal M(l,\hat l) \geq 1/4.
	\end{equation*}
\end{theorem}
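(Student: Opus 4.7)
The plan is to split the argument according to which term dominates in the threshold $\sigma_*\vee \tilde\sigma/n^{1/4}$, and in each case to exhibit a sub-family of $\mathcal P_{l,\lambda}(\sigma_*,\tilde\sigma)$ on which even the Bayes risk under a uniform prior on the labels is at least $1/4$. A useful preliminary observation is that $\mathcal M(l,\hat l) = (1-|\langle l,\hat l\rangle|/n)/2$ for $l,\hat l\in\{\pm 1\}^n$, so the target reduces to showing $\bbE|\langle l,\hat l\rangle|/n \leq 1/2$ for some distribution in the family.

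In the first regime $\lambda\leq c\sigma_*$, I would place the entire signal on a single coordinate: set $\mu=\lambda e_1$, $\sigma_1=\sigma_*$, and distribute the remaining $\tilde\sigma^4$-budget among the other coordinates (feasible because $\sigma_*\leq\tilde\sigma$). Only the first coordinate of each $Y_j$ carries information about $l_j$, and the per-coordinate Bayes problem reduces to testing $N(\pm\lambda,\sigma_*^2)$ under a uniform prior on $\{\pm 1\}$, whose Bayes error is $\Phi(-\lambda/\sigma_*)$. A sufficiently small choice of $c$ forces this above $1/4$, and a standard concentration argument lifts the per-coordinate bound to the overall clustering Bayes risk, since $\bbE|\langle l,\hat l^\star\rangle|/n\to 1-2\Phi(-\lambda/\sigma_*)$ by the law of large numbers for the coordinate-wise Bayes rule $\hat l^\star_j = \mathrm{sgn}(Y_{j,1})$.

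In the second regime $\lambda\leq c\tilde\sigma/n^{1/4}$ I would take the homoskedastic subfamily with $\sigma_j^2\equiv \tilde\sigma^2/\sqrt p$ (saturating the $\tilde\sigma^4$-budget) and place a joint prior with $l$ uniform on $\{\pm 1\}^n$ and $\mu$ uniform on the sphere $\lambda S^{p-1}$. After integrating out $\mu$, the conditional law becomes a tensor Gaussian with a rank-one spike, $Y_{\cdot k}\mid l \sim N(0,\sigma^2 I_n + (\lambda^2/p)ll^\top)$ independently across $k$. I then apply Fano's inequality to a Varshamov--Gilbert packing of $\{\pm 1\}^n/\{\pm 1\}$ with $\mathcal M$-separation $1/4$ and cardinality $2^{\Omega(n)}$, and bound the mutual information via the Gaussian-entropy upper bound $H(Y)\leq (np/2)\log(2\pi e(\sigma^2+\lambda^2/p))$ combined with the exact value of $H(Y\mid l)$ obtained from $\det(\sigma^2 I_n+(\lambda^2/p)ll^\top) = \sigma^{2(n-1)}(\sigma^2+n\lambda^2/p)$. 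These yield $I(l;Y) \leq (p/2)[n\log(1+\theta)-\log(1+n\theta)]$ with $\theta=\lambda^2/(p\sigma^2)$, and a Taylor expansion in small $\theta$ gives $I(l;Y)\lesssim n^2\lambda^4/\tilde\sigma^4$, a vanishing fraction of $\log M \asymp n$ exactly in the desired regime.

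The main obstacle is this sharp mutual-information control in the second case. A direct two-point comparison of two labelings would yield only the weaker threshold $\lambda\lesssim\sigma_*/\sqrt n$, because for fixed $\mu$ the KL divergence between two labelings scales as $n\|\mu\|_{\Sigma^{-1}}^2$, so it is essential to exploit the fact that $\mu$ is an unknown parameter by placing a prior on it and computing the marginal entropy of $Y$ exactly. Equivalently, the same threshold can be obtained by the second-moment method: $\chi^2(P_{\mathrm{mix}}\|P_0) = \bbE_{u,u'}\cosh^n(\lambda^2 u^\top u'/\sigma^2)-1$ for independent uniform $u,u'\in S^{p-1}$, where the variance $1/p$ of $u^\top u'$ combined with the exponent $n$ reproduces the scaling $n\lambda^4/\tilde\sigma^4$ governing both approaches. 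The only remaining bookkeeping is enforcing $\|\mu\|\geq\lambda$ exactly on the prior, which the spherical choice achieves directly, with any boundary effects absorbed into the universal constant $c$.
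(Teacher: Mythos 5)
Your proposal captures the right high-level structure and the correct constructions, but it takes a genuinely different route from the paper's proof in both cases, and several of the steps you wave at would need substantive work to complete. Let me compare.

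For the first regime $\lambda\lesssim\sigma_*$, you propose a Bayes-risk argument under a uniform prior on $l$, with the one-coordinate construction $\mu=\lambda e_1$, $\sigma_1=\sigma_*$, $\sigma_j=0$ ($j\ge 2$). The paper instead runs Fano directly: it takes the same construction, builds a Varshamov--Gilbert packing of $\{\pm1\}^n$ with $\mathcal M$-separation $1/3$ and $\log N\gtrsim n$, computes $D_{\mathrm{KL}}\le 4n\lambda^2/\sigma_*^2$ between any two labelings, and concludes. Your route is morally equivalent --- Fano is one way to lower bound the Bayes risk under a uniform prior on a packing --- but the step you call ``a standard concentration argument'' hides real content: (i) the Bayes estimator for the $\mathcal M$ loss is not obviously the coordinatewise sign rule $\hat l^\star$, because of the global sign-flip inside $\min\{\cdot,\cdot\}$ (the posterior factorizes, but the loss does not); and (ii) the theorem is non-asymptotic, so ``$\to$'' by the LLN is not enough; you need a finite-$n$ bound. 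Both can be fixed (e.g.\ bound $\bbE|\langle l,\hat l\rangle|\le\sqrt n+\sum_j\bbE|2q_j-1|$ via Cauchy--Schwarz on the posterior, where $q_j=\Prob(l_j=1\mid Y)$, and control $\bbE|\tanh(\lambda^2/\sigma_*^2+(\lambda/\sigma_*)G)|$), but the fix is about as much work as just running Fano, which is cleaner here.

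For the second regime $\lambda\lesssim\tilde\sigma/n^{1/4}$, your approach is the more interesting departure. The paper reduces to the homoskedastic subfamily $\sigma_j^2\equiv\tilde\sigma^2/\sqrt p$ and then cites an external theorem (Cai--Zhang 2018, Theorem 6) for the homoskedastic clustering lower bound. You instead rederive this bound from scratch by putting a prior on $\mu$, marginalizing to get $Y_{\cdot k}\mid l\sim N(0,\sigma^2 I_n+(\lambda^2/p)\,ll^\top)$ iid across $k$, and bounding the mutual information as $I(l;Y)\le\frac{p}{2}\bigl[n\log(1+\theta)-\log(1+n\theta)\bigr]\le\frac{p}{4}(n\theta)^2=\frac{n^2\lambda^4}{4\tilde\sigma^4}$ with $\theta=\lambda^2/(p\sigma^2)$, then Fano over a $2^{\Omega(n)}$-packing. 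This computation is correct and non-asymptotic (the inequality $x-\log(1+x)\le x^2/2$ avoids any Taylor approximation), and $n\theta\lesssim\sqrt{n/p}\le 1$ in the relevant regime $n\le p$ (for $n>p$ the threshold is dominated by $\sigma_*$ anyway). This is more self-contained than the paper. You correctly anticipate the two delicate points: (a) the Gaussian prior on $\mu$ puts mass on $\|\mu\|<\lambda$, so strictly it does not live in $\mathcal P_{l,\lambda}$; switching to the uniform-on-sphere prior fixes the support but breaks the exact Gaussian entropy formula you use, so the honest fix is a slightly inflated Gaussian prior plus conditioning on $\{\|\mu\|\ge\lambda\}$; (b) the Fano step needs either a prior-version of Fano with approximate recovery under the $\mathcal M$ loss, or a bound on $I$ when $l$ is restricted to the packing set rather than uniform on $\{\pm1\}^n$. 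Neither is hard, but both are currently gaps. Overall, the constructions and the scaling calculations are right; what's missing is the finite-$n$ rigor in case 1 and the prior/Fano bookkeeping in case 2.
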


\section{Additional Proofs}\label{sec:proofs}

\subsection{Proofs for main results}\label{sec:proof-main}
In this section, we collect the proofs of upper and lower bound results in Section \ref{sec:main-results} including Lemma \ref{lm:Gaussian-comparison}, Lemma \ref{lm:iid-Gaussian-moment}, Proposition \ref{pr:lower} and Theorem \ref{th:hetero-clustering-lower-bound}.
\begin{proof}[Proof of Lemma \ref{lm:Gaussian-comparison}]
This proof shares similarity but shows more distinct aspects, compared with the one of Wigner-type \cite[Proposition 2.1]{bandeira16sharp}. We assume $\sigma_\ast = 1$ throughout the proof without loss of generality. We divide the proof into two steps, which targets on the two sides of the inequalities, respectively.
\begin{enumerate}
	\item[Step 1] One can check that $\mathbb{E} ZZ^\top = \diag\left(\left\{\sum_{j=1}^{p_2} \sigma_{ij}^2\right\}_{i=1}^{p_1}\right).$
	Consider the following expansion,
	\begin{equation}\label{eq:expansion-Gaussian}
	\begin{split}
	& \mathbb{E}\tr\left\{(ZZ^\top - \mathbb{E}ZZ^\top)^{q}\right\} =  \sum_{\substack{u_1, \ldots, u_{q}, u_{q+1} \in [p_1]}} \mathbb{E} \prod_{k=1}^q\left(ZZ^\top - \mathbb{E}ZZ^\top\right)_{u_k, u_{k+1}} \\
	= & \sum_{u_1, \ldots, u_q, u_{q+1} \in [p_1]} \mathbb{E} \prod_{k=1}^q \left\{\sum_{v_k \in [p_2]} \left(Z_{u_k, v_k} Z_{u_{k+1}, v_k} - 1_{\{u_k = u_{k+1}\}}\mathbb{E} Z_{u_k, v_k}^2\right)  \right\}\\
	= & \sum_{\substack{u_1, \ldots, u_{q}, u_{q+1} \in [p_1]\\v_1,\ldots, v_q \in [p_2]}} \mathbb{E}\prod_{k=1}^q \left(Z_{u_k, v_k} Z_{u_{k+1}, v_k} - \sigma_{u_k, v_k}^2\cdot 1_{\{u_k = u_{k+1}\}}\right).
	\end{split}
	\end{equation}
	Here, the indices are in module $q$, i.e., $u_1 = u_{q+1}$.
	Next, we consider the bipartite graph from $[p_1]$ on $[p_2]$ and the cycles of length $2q$, i.e., $\mathbf c := (u_1 \to v_1 \to u_2 \to v_2 \to \ldots \to u_q \to v_q \to u_{q+1} = u_1)$. For any $(i, j) \in [p_1]\times [p_2]$, let 
	\begin{equation}\label{eq:def-alpha-beta-L}
	\begin{split}
	& \alpha_{ij}(\mathbf c) = \text{Card} \left\{k: (u_k = i, v_k = j, u_{k+1} \neq i) \text{ or } (u_k \neq i, v_k = j, u_{k+1} = i)\right\};\\
	& \beta_{ij}(\mathbf c) = \text{Card} \left\{k: u_k = u_{k+1} = i, v_k = j\right\}.
	\end{split}
	\end{equation}
	Then, $\alpha_{ij}(\mathcal{L})$ is the number of times that the edge $(i, j)$ is visited exactly once by sub-path $u_k\to v_k \to u_{k+1}$; $\beta_{ij}(\mathbf{c})$ is the number of times that the edge $(i, j)$ is visited twice by sub-path $u_k\to v_k\to u_{k+1}$ (back and forth). Since $Z_{ij}/\sigma_{ij}$ has i.i.d. standard normal distribution, we have
	\begin{equation}\label{eq:take-variance-out}
	\begin{split}
	& \mathbb{E}\tr\left\{(ZZ^\top - \mathbb{E}ZZ^\top)^q\right\} =  \sum_{\mathbf{c}\in ([p_1]\times [p_2])^q} \prod_{(i, j)\in [p_1]\times [p_2]} \mathbb{E} Z_{ij}^{\alpha_{ij}(\mathbf{c})}\left(Z_{ij}^2 - \sigma_{ij}^2\right)^{\beta_{ij}(\mathbf{c})}\\
	= & \sum_{\mathbf{c}\in ([p_1]\times [p_2])^q} \prod_{(i, j)\in [p_1]\times [p_2]}\sigma_{ij}^{\alpha_{ij}(\mathbf{c})+2\beta_{ij}(\mathbf{c})}\prod_{(i, j)\in [p_1]\times [p_2]} \mathbb{E} G^{\alpha_{ij}(\mathbf{c})}\left(G^2 - 1\right)^{\beta_{ij}(\mathbf{c})}\\
	= & \sum_{\mathbf{c}\in ([p_1]\times [p_2])^q} \prod_{k=1}^q \sigma_{u_k, v_k} \sigma_{u_{k+1}, v_k}\prod_{(i, j)\in [p_1]\times [p_2]} \mathbb{E} G^{\alpha_{ij}(\mathbf{c})}\left(G^2 - 1\right)^{\beta_{ij}(\mathbf{c})}.
	\end{split}
	\end{equation}	
	Here $G$ denotes a $N(0,1)$ random variable. Next, let $m_{\alpha, \beta}(\mathbf{c})$ be the number of edges which appear $\alpha$ times in $(u_k \to v_k)$ or $(v_k \to u_{k+1})$ with $u_k \neq u_{k+1}$, and $\beta$ times in $(u_k \to v_k \to u_{k+1})$ with $u_k = u_{k+1}$. More rigorously,
	\begin{equation}\label{eq:def-m-alpha-beta}
	\begin{split}
	m_{\alpha, \beta}(\mathbf{c}) := \text{Card}\Big\{(i, j)&\in [p_1]\times[p_2]: \beta = |\{k: u_k=u_{k+1}=i, v_k = j\}|, \\
	& \alpha = |\{k: \text{exactly one of $u_k$ or $u_{k+1}$} = i, v_{k} = j|\Big\}.
	\end{split}
	\end{equation}
	For any cycle $\mathbf{c}$, we define its \emph{shape} $\mathbf s(\mathbf u)$ by relabeling the vertices in order of appearance. For example, the cycle $2\rightarrow 4' \rightarrow 3 \rightarrow 2' \rightarrow 2 \rightarrow 4' \rightarrow 5 \rightarrow 1' \rightarrow 2$ has shape $1\rightarrow 1' \rightarrow 2 \rightarrow 2' \rightarrow 1 \rightarrow 1' \rightarrow 3 \rightarrow 3' \rightarrow 1$. Here $i$ denotes the left vertex while $i'$ denotes the right vertex. It is easy to see for any two cycles $\mathbf{c}$ and $\mathbf{c}'$ with the same shape, we must have $m_{\alpha, \beta}(\mathbf{c}) = m_{\alpha,\beta}(\mathbf{c}')$. Thus we can well define $m_{\alpha,\beta}(\mathbf s (\mathbf c)) := m_{\alpha, \beta}(\mathbf{c})$. Based on previous discussions, 
	\begin{equation}\label{eq:power-as-shape}
	\prod_{(i, j)\in [p_1]\times [p_2]} \mathbb{E} G^{\alpha_{ij}(\mathbf{c})}\left(G^2 - 1\right)^{\beta_{ij}(\mathbf{c})} = \prod_{\substack{\alpha, \beta\geq 0\\}} \left\{\mathbb{E} G^\alpha (G^2 - 1)^\beta\right\}^{m_{\alpha, \beta}(\mathbf{s}(\mathbf{c}))}.
	\end{equation}
	Then a natural observation is that $\mathbb{E} G^\alpha (G^2 - 1)^\beta \geq 0$ for all non-negative $\alpha, \beta$ and $\mathbb{E} G^\alpha (G^2 - 1)^\beta = 0$ if and only if $\alpha$ is an odd or $\alpha = 0, \beta = 1$  (see Lemma \ref{lm:Gaussian-moments} in Appendix \ref{sec:proof-technical-lemma} for details). We then define \emph{even} shape set $\mathcal S_{p_1,p_2}$ as 
	\begin{equation}\label{eq:def-even-shape}
	\mathcal S_{p_1,p_2} = \left\{\mathbf s(\mathbf c): m_{\alpha,\beta}(\mathbf s(\mathbf c)) = 0 \text{ for all } \alpha, \beta~~s.t.~\alpha \text{ is an odd or } \alpha = 0, \beta = 1\right\}.
	\end{equation}
	Then the right hand side of \eqref{eq:power-as-shape} is nonzero only for $\mathbf s(\mathbf c) \in \mathcal S_{p_1, p_2}$ and the expansion \eqref{eq:take-variance-out} can be further rewritten as
	\begin{equation}\label{ineq:trace-expansion}
	\begin{split}
	& \mathbb{E}\tr\left\{(ZZ^\top - \mathbb{E}ZZ^\top)^{q}\right\}\\
	= & \sum_{\mathbf s_0 \in \mathcal{S}_{p_1,p_2}} \sum_{\substack{\mathbf{c}: \mathbf s(\mathbf c) = \mathbf{s}_0}} \prod_{k=1}^q \sigma_{u_k, v_k} \sigma_{u_{k+1}, v_k}  \prod_{\substack{\alpha, \beta \geq 0}} \left\{\mathbb{E} G^\alpha (G^2 - 1)^\beta\right\}^{m_{\alpha, \beta}(\mathbf{s_0})}\\
	= & \sum_{\mathbf s_0 \in \mathcal{S}_{p_1,p_2}} \prod_{\substack{\alpha, \beta \geq 0}} \left\{\mathbb{E} G^\alpha (G^2 - 1)^\beta\right\}^{m_{\alpha, \beta}(\mathbf{s}_0)} \cdot \sum_{\substack{\mathbf{c}: \mathbf s(\mathbf c) = \mathbf{s}_0}} \prod_{k=1}^q \sigma_{u_k, v_k} \sigma_{u_{k+1}, v_k}. 
	\end{split}
	\end{equation}
	Now denote $m_L(\mathbf{s}_0)$ and $m_R(\mathbf{s}_0)$ be the number of distinct left and right nodes that is visited by cycles with shape $\mathbf{s}_0$, we have the following lemma:
	\begin{lemma}\label{lm:shape-bound}
		Suppose $\sigma_* \leq 1$. Then for any shape $\mathbf s_0 \in \mathcal S_{p_1 ,p_2}$,
		\begin{equation*}
		\sum_{\substack{\mathbf{c}: \mathbf s(\mathbf c) = \mathbf{s}_0}} \prod_{k=1}^q \sigma_{u_k, v_k} \sigma_{u_{k+1}, v_k} \leq  \left(p_1\sigma_C^{2m_L(\mathbf{s}_0)-2} \sigma_R^{2m_R(\mathbf{s}_0)}\right) \wedge \left(p_2\sigma_C^{2m_L(\mathbf{s}_0)} \sigma_R^{2m_R(\mathbf{s}_0)-2}\right).
		\end{equation*}
	\end{lemma}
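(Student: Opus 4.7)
My plan is to set up the sum combinatorially by grouping the $2q$ factors $\sigma_{u_k,v_k}\sigma_{u_{k+1},v_k}$ according to which edge $(a,b)$ of the underlying bipartite multigraph of the shape $\mathbf{s}_0$ is being traversed. Because $\mathbf{s}_0$ is an even shape, every edge $(a,b)\in E(\mathbf{s}_0)$ is traversed with total multiplicity $n_{ab}:=\alpha_{ab}(\mathbf{s}_0)+2\beta_{ab}(\mathbf{s}_0)\ge 2$ (the excluded cases $\alpha$ odd and $(\alpha,\beta)=(0,1)$ are precisely those with odd or $<2$ total power). Hence, writing a cycle with shape $\mathbf{s}_0$ as an injective labeling $a\mapsto u^{(a)}\in[p_1]$ and $b\mapsto v^{(b)}\in[p_2]$ of the $m_L(\mathbf{s}_0)$ left and $m_R(\mathbf{s}_0)$ right shape-vertices, and using $\sigma_*\le 1$ to squeeze each $\sigma_{u^{(a)},v^{(b)}}^{n_{ab}}$ down to $\sigma_{u^{(a)},v^{(b)}}^2$, I will obtain
\begin{equation*}
\sum_{\mathbf{c}:\mathbf{s}(\mathbf{c})=\mathbf{s}_0}\prod_{k=1}^q \sigma_{u_k,v_k}\sigma_{u_{k+1},v_k}
\;\le\; \sum_{u^{(1)},\ldots,u^{(m_L)}\in[p_1]}\sum_{v^{(1)},\ldots,v^{(m_R)}\in[p_2]}\prod_{(a,b)\in E(\mathbf{s}_0)}\sigma_{u^{(a)},v^{(b)}}^{2},
\end{equation*}
where I have also dropped the injectivity constraint, which is legitimate since all summands are non-negative.

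Next I will exploit that the shape is a closed walk, so the edge set $E(\mathbf{s}_0)$ induces a connected bipartite graph on the $m_L+m_R$ shape-vertices. Choose any spanning tree $T\subset E(\mathbf{s}_0)$; for each non-tree edge $(a,b)\in E(\mathbf{s}_0)\setminus T$ I bound $\sigma_{u^{(a)},v^{(b)}}^2\le\sigma_*^2\le 1$ and discard the factor. This reduces the problem to bounding a product of $m_L+m_R-1$ tree-edge factors. I then peel the tree from its leaves inward: each time I sum over a leaf vertex, it appears in exactly one tree edge, so the sum over that vertex is bounded by $\sigma_R^2$ if the leaf is a right vertex (by definition of $\sigma_R^2=\max_i\sum_j\sigma_{ij}^2$) or $\sigma_C^2$ if it is a left vertex. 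I repeat until only the chosen root remains, which is summed freely and contributes $p_1$ or $p_2$ depending on side.

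Rooting the spanning tree at a left vertex yields the factor $p_1$ for the root, $\sigma_C^2$ for each of the $m_L-1$ remaining left vertices, and $\sigma_R^2$ for each of the $m_R$ right vertices, giving $p_1\sigma_C^{2m_L-2}\sigma_R^{2m_R}$. Rooting at a right vertex symmetrically yields $p_2\sigma_C^{2m_L}\sigma_R^{2m_R-2}$. Taking the minimum of the two bounds proves the lemma.

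The main obstacle I anticipate is verifying that the leaf-peeling ordering really does bound each successive vertex-sum by $\sigma_C^2$ or $\sigma_R^2$ cleanly; in particular, I need to be careful that when a vertex becomes a leaf in the remaining subtree it is incident to exactly one surviving factor, so that the one-dimensional bounds $\sum_i\sigma_{ij}^2\le\sigma_C^2$ and $\sum_j\sigma_{ij}^2\le\sigma_R^2$ apply directly. Everything else—reducing powers via $\sigma_*\le 1$, dropping injectivity, and connectedness of $E(\mathbf{s}_0)$—is routine, so the heart of the argument is this bookkeeping on the spanning tree.
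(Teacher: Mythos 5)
Your proof is correct and takes essentially the same approach as the paper: both reduce each edge multiplicity to exactly two using $\sigma_*\le 1$ (justified because every visited edge in an even shape has total multiplicity $\ge 2$), restrict to a spanning tree of the connected shape-graph, and then peel leaves to collect one factor of $\sigma_R^2$ or $\sigma_C^2$ per non-root vertex and $p_1$ or $p_2$ for the root. The only cosmetic difference is that the paper implicitly uses the specific first-appearance spanning tree and sums in reverse appearance order, whereas you make the spanning tree choice and leaf-peeling explicit and arbitrary.
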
 
	\begin{proof}
	The proof of Lemma \ref{lm:shape-bound} is an analogue of \cite[Lemma 2.5]{bandeira16sharp}. We first show
\begin{equation}\label{ineq:shape-bound-1-side}
	\sum_{\substack{\mathbf{c}: \mathbf s(\mathbf{c}) = \mathbf{s}_0}} \prod_{k=1}^q \sigma_{u_k, v_k} \sigma_{u_{k+1}, v_k} \leq  p_1\sigma_C^{2m_L(\mathbf{s}_0)-2} \sigma_R^{2m_R(\mathbf{s}_0)}.
\end{equation}
Suppose $\mathbf s_0 = (s_1, s_1', \ldots, s_q, s_q')$, let $l(k) = \min\{j: s_j = k\}$, i.e., the first time in any cycle of shape $\mathbf s_0$ at which its $k$th distinct left vertex is visited. Similarly we define $r(k) = \min\{j: s_j' = k\}$. Now let $\mathbf c = (u_1, v_1, \cdots u_q, v_q)$ be a cycle with shape $\mathbf s_0$. Then the following $m_L(\mathbf s_0)$ distinct edges from right vertex to left vertex will appear in order: $v_{l(2)-1} \rightarrow u_{l(2)}, v_{l(3)-1} \rightarrow u_{l(3)}, \cdots, v_{l(m_L(\mathbf s_0))-1} \rightarrow u_{l(m_L(\mathbf s_0))}$. Similarly, we have $m_R(\mathbf s_0)$ edges from left vertex to right vertex: $u_{r(1)} \rightarrow v_{r(1)}, u_{r(2)} \rightarrow v_{r(2)}, \cdots, u_{r(m_R(\mathbf s_0))} \rightarrow v_{r(m_R(\mathbf s_0))}$. In addition, these $m_L + m_R - 1$ edges are distinct by the definition of $l(k)$ and $r(k)$. We claim each of these $m_L + m_R - 1$ edges appear at least twice. Suppose one of the above edges only appear once, then we must have $m_{1,0}(\mathbf s(\mathbf c)) \geq 1$, which contradicts $\mathbf s_0 \in \mathcal S_{p_1+p_2}$. Now for a fixed starting vertex $u_1 = u \in [p_1]$, we can bound
\begin{equation*}
	\begin{split}
		&\sum_{\substack{\mathbf c: u_1 = u \\ \mathbf{s}(\mathbf c) = \mathbf s_0 }}\prod_{k=1}^q \sigma_{u_k, v_k} \sigma_{u_{k+1}, v_k} \\
		& \leq \sum_{\substack{\mathbf c: u_1 = u \\ \mathbf{s}(\mathbf c) = \mathbf s_0 }}\left(\sigma^2_{u_{r(1)},v_{r(1)}}\cdots \sigma^2_{u_{r(m_R(\mathbf s_0))},v_{r(m_R(\mathbf s_0))}}\right) \cdot \left(\sigma^2_{u_{l(2)},v_{l(2)-1}}\cdots \sigma^2_{u_{l(m_L(\mathbf s_0))},u_{l(m_R(\mathbf s_0))-1}}\right) \\
		& = \sum_{\substack{a_2 \neq \cdots \neq a_{m_L(\mathbf s_0)} \in [p_1] \\ b_1 \neq \cdots \neq b_{m_R(\mathbf s_0)} \in [p_2] }}\left(\sigma^2_{a_{s_{r(1)}},b_1}\cdots \sigma^2_{a_{s_{r(m_R(\mathbf s_0))}},b_{m_R(\mathbf s_0)}}\right) \cdot \left(\sigma^2_{a_{2},b_{s'_{l(2)-1}}}\cdots \sigma^2_{a_{m_L(\mathbf s_0)},b_{s'_{l(m_R(\mathbf s_0))-1}}}\right) \\
		& \leq  \sigma_R^{2m_R(\mathbf s_0)} \sigma_C^{2(m_L(\mathbf s_0)-1)}.
	\end{split}
\end{equation*} 
Then \eqref{ineq:shape-bound-1-side} follows by taking different initial vertices $u \in [p_1]$. Similarly we can show 
\begin{equation*}
	\sum_{\substack{\mathbf{c}: \mathbf s(\mathbf{c}) = \mathbf{s}_0}} \prod_{k=1}^q \sigma_{u_k, v_k} \sigma_{u_{k+1}, v_k} \leq  p_2\sigma_C^{2m_L(\mathbf{s}_0)} \sigma_R^{2m_R(\mathbf{s}_0)-2}
\end{equation*}
and the proof is complete.
	\end{proof}
	Combining \eqref{ineq:trace-expansion} and Lemma \ref{lm:shape-bound}, we obtain
	\begin{equation}\label{ineq:trace-bound-hetero}
	\begin{split}
	& \mathbb{E}\tr\left\{(ZZ^\top - \mathbb{E}ZZ^\top)^{q}\right\}\\
	\leq & \sum_{\mathbf{s}_0 \in \mathcal S_{p_1,p_2}} \prod_{\substack{\alpha, \beta \geq 0}} \left\{\mathbb{E} G^\alpha (G^2 - 1)^\beta\right\}^{m_{\alpha, \beta}(\mathbf{s}_0)}\\
	&  \cdot \left\{p_1 \sigma_C^{2(m_L(\mathbf{s}_0)-1)}\sigma_R^{2m_R(\mathbf{s}_0)}\right\} \wedge \left\{p_2 \sigma_C^{2m_L(\mathbf{s}_0)}\sigma_R^{2(m_R(\mathbf{s}_0)-1)}\right\}.
	\end{split}
	\end{equation}
	
	\item[Step 2] Next, we consider the expansion for $\mathbb{E}\tr\left((HH^\top)^q\right)$, where $H\in \mathbb{R}^{m_1\times m_2}$ is with i.i.d. standard Gaussian entries. We similarly expand as Step 1 to obtain
	\begin{equation*}
	\begin{split}
	& \mathbb{E}\tr\left((HH^\top - m_2 I_{m_1})^q\right)\\
	= & \sum_{\mathbf{s}_0 \in \mathcal S_{p_1, p_2}} \prod_{\substack{\alpha, \beta \geq 0}} \left\{\mathbb{E} G^\alpha (G^2 - 1)^\beta\right\}^{m_{\alpha, \beta}(\mathbf{s}_0)} \cdot \left|\left\{\mathbf c: \mathbf s(\mathbf c) = \mathbf s_0\right\}\right| \\
	= & \sum_{\mathbf{s}_0 \in \mathcal S_{p_1,p_2}} \prod_{\alpha, \beta \geq 0} \mathbb{E} \left\{G^\alpha (G^2 - 1)^\beta\right\}^{m_{\alpha, \beta}(\mathbf{s}_0)}\\ 
	& \cdot m_1(m_1-1)\cdots (m_1 - m_L(\mathbf{s}_0)+1) m_2(m_2-1)\cdots (m_2 - m_R(\mathbf{s}_0)+1)
	\end{split}
	\end{equation*}
	Provided that $m_1 = \lceil \sigma_C^2\rceil + q-1$ and $m_2 = \lceil\sigma_R^2\rceil+q-1$, $m_L(\mathbf{s}_0), m_R(\mathbf{s}_0)\leq q$, we have
	\begin{equation*}
	\begin{split}
	& m_1(m_1-1) \cdots (m_1-m_L(\mathbf{s}_0)+1)\cdot m_2(m_2-1)\cdots (m_2 - m_R(\mathbf{s}_0)+1)\\
	\geq & m_1 \cdot (m_1-m_L(\mathbf{s}_0)+1)^{m_L(\mathbf{s}_0)-1}\cdot (m_1-m_R(\mathbf{s}_0)+1)^{m_R(\mathbf{s}_0)} \\
	\geq & m_1 \sigma_C^{2m_L(\mathbf{s}_0)-2}\cdot \sigma_R^{2m_R(\mathbf{s}_0)}.
	\end{split}
	\end{equation*}
	Similarly, 
	\begin{equation*}
	\begin{split}
	& m_1(m_1-1) \cdots (m_1-m_L(\mathbf{s}_0)+1)\cdot m_2(m_2-1)\cdots (m_2 - m_R(\mathbf{s}_0)+1)\\
	\geq & \sigma_C^{2m_L(\mathbf{s}_0)} \cdot m_2 \sigma_R^{2m_R(\mathbf{s}_0)-2}.	
	\end{split}
	\end{equation*}
	These all together imply
	\begin{equation}\label{ineq:trace-bound-homo}
	\begin{split}
	\mathbb{E}\tr\left((HH^\top - m_2 I_{m_1})^q\right)
	\geq & \sum_{\mathbf{s}_0 \in \mathcal S_{p_1+p_2}} \prod_{\alpha, \beta \geq 0} \mathbb{E} \left\{G^\alpha (G^2 - 1)^\beta\right\}^{m_{\alpha, \beta}(\mathbf{s}_0)}\\
	& \cdot \left\{m_1 \sigma_C^{2m_L(\mathbf{s}_0)-2}\cdot \sigma_R^{2m_R(\mathbf{s}_0)}\right\} \vee \left\{m_2 \sigma_C^{2m_L(\mathbf{s}_0)}\cdot \sigma_R^{2m_R(\mathbf{s}_0)-2}\right\}.
	\end{split}
	\end{equation}
\end{enumerate}
By comparing \eqref{ineq:trace-bound-hetero} and \eqref{ineq:trace-bound-homo}, we have finally proved that
\begin{equation*}
\mathbb{E}\tr\left\{(ZZ^\top - \mathbb{E}ZZ^\top)^q\right\} \leq \left(\frac{p_1}{m_1}\wedge \frac{p_2}{m_2}\right) \mathbb{E} \tr\left\{\left(HH^\top - \bbE HH^\top\right)^q\right\}.
\end{equation*}
\end{proof}

\begin{proof}[Proof of Lemma \ref{lm:iid-Gaussian-moment}]
Let $W = \max\left\{\sigma_{\max}(H) - \sqrt{m_2} - \sqrt{m_1}, \sqrt{m_2} - \sqrt{m_1} -\sigma_{\min}(M), 0\right\}$, by the tail bound of i.i.d. Gaussian matrix (c.f., \cite[Corollary 5.35]{vershynin2010introduction}), $\bbP\left(W\geq t\right) \leq 2\exp(-t^2/2)$ for all $t\geq 0$. 
Thus for any $q \geq 1$,
\begin{equation*}
\begin{split}
\mathbb{E}W^q = & q\int_0^\infty t^{q-1}\bbP\left(W\geq t\right)dt \leq 2q\int_0^\infty t^{q-1}\exp(-t^2/2)dt  = 2^{\frac{q}{2}}q\Gamma(q/2).
\end{split}
\end{equation*}
Since
\begin{equation}
\begin{split}
\|HH^\top - \mathbb{E}HH^\top\| = & \|HH^\top - m_2I_{m_1}\| = \max\left\{\sigma_{\max}^2(H)-m_2, m_2 - \sigma_{\min}^2(H)\right\}\\
\leq & \left(W+\sqrt{m_1}+\sqrt{m_2}\right)^2 - m_2 \\
= & 2\sqrt{m_1m_2} + m_1 + W^2 + 2(\sqrt{m_1}+\sqrt{m_2})W,
\end{split}
\end{equation}
we have
\begin{equation*}
\begin{split}
& \left(\mathbb{E}\|HH^\top - \mathbb{E}HH^\top\|^{q}\right)^{1/q} \\
\leq & 2\sqrt{m_1m_2}+m_1 + (\mathbb{E} W^{2q})^{1/q} + 2(\sqrt{m_1}+\sqrt{m_2})\left(\mathbb{E}W^q\right)^{1/q}\\
\leq & 2\sqrt{m_1m_2} + m_1 + \left(2^{q+1}q\Gamma(q)\right)^{1/q} + 2(\sqrt{m_1}+\sqrt{m_2})\left(2^{\frac{q}{2}}q\Gamma(q/2)\right)^{1/q}.
\end{split}
\end{equation*}
Next we claim
\begin{equation}\label{ineq:to-verify}
\left(2^{q+1}q\Gamma(q)\right)^{1/q}\leq 2q,\quad \left(2^{\frac{q}{2}}q\Gamma(q/2)\right)^{1/q} \leq 2q^{1/2}.
\end{equation}
One can verify \eqref{ineq:to-verify} for $2\leq q\leq 10$ by calculation. When $q\geq 11$, \eqref{ineq:to-verify} can be verified by the Gamma function upper bound in \cite{batir2017bounds}. In summary, we have
$$\left(\mathbb{E}\|HH^\top - \mathbb{E}HH^\top\|^{q}\right)^{1/q} \leq 2\sqrt{m_1m_2} + m_1 + 4(\sqrt{m_1}+\sqrt{m_2})\sqrt{q} + 2q.$$
which has finished the proof of the first part of this lemma.

For the second part, when $m_1\leq m_2$, since $HH^\top - \mathbb{E}HH^\top$ is an $m_1$-by-$m_1$ matrix, we know $\tr(\left(HH^\top - \mathbb{E}HH^\top\right)^q)$ is the sum of $m_1$ eigenvalues of $\left(HH^\top - \mathbb{E}HH^\top\right)^q$, while each of these eigenvalues are no more than $\|HH^\top - \mathbb{E}HH^\top\|^q$. Thus,
\begin{equation*}
\begin{split}
& \mathbb{E} \tr\left\{\left(HH^\top - \mathbb{E}HH^\top\right)^q\right\} \leq \mathbb{E} m_1 \|HH^\top - \mathbb{E}HH^\top\|^q \\
\leq &  (m_1\wedge m_2) \cdot \left(2\sqrt{m_1m_2} + m_1 + 4(\sqrt{m_1}+\sqrt{m_2})\sqrt{q} + 2q\right)^{q}.
\end{split}
\end{equation*}

When $m_1 > m_2$, we shall note that $\rank(HH^\top) \leq m_2$ and $\mathbb{E}HH^\top = m_2I_{m_1}$. Then,
\begin{equation*}
\begin{split}
\left(HH^\top - \mathbb{E}HH^\top\right)^q - (-1)^q m_2^q I_{m_1} = \sum_{k=1}^{q} (-m_2)^{q-k}\binom{q}{m_1}(HH^\top)^k,
\end{split}
\end{equation*}
which shares the eigenspace of $HH^\top$ and has rank no more than $m_2$. Thus,
\begin{equation*}
\begin{split}
& \mathbb{E} \tr\left\{\left(HH^\top - \mathbb{E}HH^\top\right)^q\right\} = \mathbb{E}\tr\left\{\left(HH^\top - \mathbb{E}HH^\top\right)^q - (-1)^q m_2^q I_{m_1}\right\} + \tr\left((-1)^q m_2^qI_{m_1}\right)\\
\leq & m_2\mathbb{E} \left\|\left(HH^\top - \mathbb{E}HH^\top\right)^q - (-1)^q m_2^q I_{m_1}\right\| + m_1m_2^q\\
\leq & m_2 \left\{\left(2\sqrt{m_1m_2} + m_1 + 4(\sqrt{m_1}+\sqrt{m_2})\sqrt{q} + 2q\right)^q + m_2^q\right\} + m_1m_2^q\\
\leq & 2m_2\left(2\sqrt{m_1m_2} + m_1 + 4(\sqrt{m_1}+\sqrt{m_2})\sqrt{q} + 2q\right)^q\\
= & 2(m_1\wedge m_2)\left(2\sqrt{m_1m_2} + m_1 + 4(\sqrt{m_1}+\sqrt{m_2})\sqrt{q} + 2q\right)^q.
\end{split}
\end{equation*}
where the last inequality is due to $m_1>m_2$.
\end{proof}

\begin{proof}[Proof of Proposition \ref{pr:lower}]
	Since $Z_{ij}\overset{iid}{\sim}N(0, 1)$, we have $\mathbb{E}ZZ^\top = p_2 I_{p_1}$ and
	\begin{equation*}
	\bbE\left\|ZZ^\top - \bbE ZZ^\top\right\| = \mathbb{E}\left\|ZZ^\top-p_2I_{p_1}\right\| \geq \bbE \left(\left\|ZZ^\top\right\| - p_2\right) = \bbE\|Z\|^2 - p_2.
	\end{equation*}
	Since $\left\|Z\right\|/(\sqrt{p_1}+\sqrt{p_2}) \rightarrow 1$ as $p_1,p_2$ tend to infinity \cite[Theorem 5.31]{vershynin2010introduction},
	\begin{equation*}
	\liminf_{p_1, p_2 \rightarrow \infty} \frac{\bbE\left\|ZZ^\top - \bbE ZZ^\top\right\|}{2\sigma_C\sigma_R+\sigma_C^2} \geq \liminf_{p_1, p_2 \rightarrow \infty} \frac{\bbE\|Z\|^2 - p_2}{2\sqrt{p_1p_2}+p_1} \geq 1.
	\end{equation*}
\end{proof}

\begin{proof}[Proof of Theorem \ref{th:hetero-wishart-lower}]
It suffices to prove the following separate lower bounds to prove this theorem.
\begin{equation}\label{ineq:hetero-lower1}
\sup_{Z \in \mathcal{F}_p(\sigma_\ast, \sigma_C, \sigma_R)} \mathbb{E}\|ZZ^\top - \mathbb{E}ZZ^\top\| \gtrsim \sigma_C^2; 
\end{equation}
\begin{equation}\label{ineq:hetero-lower2}
\sup_{Z \in \mathcal{F}_p(\sigma_\ast, \sigma_C, \sigma_R)} \mathbb{E}\|ZZ^\top - \mathbb{E}ZZ^\top\| \gtrsim \sigma_C\sigma_R; 
\end{equation}
\begin{equation}\label{ineq:hetero-lower3}
\sup_{Z \in \mathcal{F}_p(\sigma_\ast, \sigma_C, \sigma_R)} \mathbb{E}\|ZZ^\top - \mathbb{E}ZZ^\top\| \gtrsim \sigma_R \sigma_\ast \sqrt{\log p} + \sigma_\ast^2\log p.
\end{equation}
\begin{enumerate}
	\item We first set $\sigma_{i1} = \sigma_C/\sqrt{p_1}$; $\sigma_{ij} = 0, j\geq 2$. If $Z_{ij}\sim N(0, \sigma_{ij}^2)$ independently, it is easy to check that $Z\in \mathcal{F}_{p_1, p_2}(\sigma_\ast, \sigma_R, \sigma_C)$. Then $Z$ is zero except the first column. Suppose the first column of $Z$ is $z$, then $ZZ^\top - \mathbb{E}ZZ^\top = zz^\top - \frac{\sigma_C^2}{p_1} I_{p_1}$, 
	\begin{equation*}
	\begin{split}
	\mathbb{E}\|ZZ^\top - \mathbb{E}ZZ^\top\| = &  \mathbb{E}\|zz^\top - \sigma_C^2/p_1\| \geq \mathbb{E}\|zz^\top\| - \sigma_C^2/p_1 = \mathbb{E}\|z\|_2^2 - \sigma_C^2/p_1\\
	\geq & \sigma_C^2(1-1/p_1) \geq c\sigma_C^2,
	\end{split}
	\end{equation*}
	which has shown \eqref{ineq:hetero-lower1}.
	\item Let $k_1 = \lfloor\sigma_C^2/\sigma_{\ast}^2\rfloor$, $k_2 = \lfloor \sigma_R^2/\sigma_{\ast}^2\rfloor$. Construct 
	\begin{equation*}
	\sigma_{ij} = \left\{\begin{array}{ll}
	\sigma_{\ast}, & 1\leq i \leq k_1, 1\leq j \leq k_2;\\
	0, & \text{otherwise}.
	\end{array}\right.
	\end{equation*}
	By such a construction, $Z_{ij} \sim N(0, \sigma_{\ast}^2)$ for $1\leq i \leq k_1, 1\leq j \leq k_2$; $Z_{ij} = 0$ otherwise. Thus,
	\begin{equation*}
	\begin{split}
	& \mathbb{E}\left(Z_{\cdot j}Z_{\cdot j}^\top - \mathbb{E}Z_{\cdot j}Z_{\cdot j}^\top\right)^2 = \mathbb{E}Z_{\cdot j}Z_{\cdot j}^\top Z_{\cdot j}Z_{\cdot j}^\top - \left(\mathbb{E}Z_{\cdot j}Z_{\cdot j}^\top\right)^2 \\
	= &  \mathbb{E} \|Z_{\cdot j}\|_2^2 Z_{\cdot j} Z_{\cdot j}^\top - \sigma_{\ast}^4 I_{k_1} = (k_1+1)\sigma_{\ast}^4.
	\end{split}
	\end{equation*}
	Here, the last equality is due to 
	\begin{equation*}
	\left(\mathbb{E} \|Z_{\cdot j}\|_2^2 Z_{\cdot j} Z_{\cdot j}^\top\right)_{i, i'} = \mathbb{E} \|Z_{\cdot j}\|_2^2 Z_{i, j} Z_{i', j} = \left\{\begin{array}{ll}
	\left(k_1-1+3\right)\sigma_\ast^4, & 1\leq i = i' \leq k_1;\\
	0, & 1\leq i \neq i' \leq k_1.\\
	\end{array}\right.
	\end{equation*}
	Thus, 
	$$\left\|\sum_{j=1}^{k_2}\mathbb{E}\left\{Z_{\cdot j}Z_{\cdot j}^\top - \mathbb{E}Z_{\cdot j}Z_{\cdot j}^\top\right\}^2\right\| = \left\|(k_1+1)k_2\sigma_{\ast}^4 I \right\| = (k_1+1)k_2\sigma_{\ast}^4.$$
	Note that $ZZ^\top - \mathbb{E}ZZ^\top$ can be decomposed as the sum of independent random matrices,
	\begin{equation*}
	ZZ^\top - \mathbb{E}ZZ^\top = \sum_{j=1}^{k_2}\left\{Z_{\cdot j}Z_{\cdot j}^\top - \mathbb{E}Z_{\cdot j}Z_{\cdot j}^\top\right\}.
	\end{equation*}
	We apply the bound for expected norm of random matrices sum \cite{tropp2016expected} and obtain
	\begin{equation*}
	\begin{split}
	\mathbb{E} \|ZZ^\top - \mathbb{E} ZZ^\top \| \gtrsim & \sqrt{(k_1+1)k_2\sigma_{\ast}^4} = \sqrt{(\lfloor \sigma_C^2/\sigma_{\ast}^2\rfloor + 1)\cdot \lfloor \sigma_R^2/\sigma_{\ast}^2\rfloor \cdot \sigma_{\ast}^4}\\
	\geq & \sqrt{(\sigma_C^2/\sigma_{\ast}^2)\cdot \sigma_R^2/(2\sigma_{\ast}^2) \cdot \sigma_{\ast}^4} \quad \text{(since $\sigma_R \geq \sigma_{\ast}$)}\\
	\gtrsim & \sigma_R \sigma_C.
	\end{split}
	\end{equation*}	
	We thus have shown \eqref{ineq:hetero-lower2}.
	\item Set $k_1 = \lfloor\sigma_C^2/\sigma_{\ast}^2\rfloor$, $k_2 = \lfloor \sigma_R^2/\sigma_{\ast}^2\rfloor$, $m = \lfloor(p_1/k_1) \wedge (p_2/k_2)\rfloor$. If $k_2 \geq (\log p)^2$, then $\sigma_R \geq \sigma_* \log p$ and \eqref{ineq:hetero-lower3} can be implied by \eqref{ineq:hetero-lower2}. So we assume $k_2 \leq (\log p)^2$, thus 
	\begin{equation*}
	k_1m \geq k_1 \left(\frac{p_1}{2k_1} \wedge \frac{p_2}{2k_2}\right) \geq \frac{p_1}{2} \wedge \frac{p_2}{2(\log p)^2} \geq \frac{1}{2}\frac{p}{(\log p)^2}
	\end{equation*}
	and $\log(k_1m) \geq c\log p$. Let
	$$(\sigma_{ij}) = \begin{bmatrix}
	B & \\
	& B \\
	& & \ddots \\
	\end{bmatrix} = \diag(\overbrace{B, B,  \ldots, B,  B}^{m},0) \in \mathbb{R}^{p_1\times p_2}, \quad B = \sigma_{\ast} 1_{k_1}1_{k_2}^\top.$$
	Then we can rewrite down $Z$ in rowwise form as
	\begin{equation*}
	Z = \begin{bmatrix}
	\beta_1^\top & 0 & 0\\
	\vdots & \vdots & \vdots \\
	\beta_{k_1}^\top & 0 & 0\\
	0 & \beta_{k_1+1}^\top & 0\\
	\vdots & \vdots &  \vdots \\
	0 & \beta_{2k_1}^\top & 0 \\
	0 & 0 & \ddots
	\end{bmatrix}\in \mathbb{R}^{p_1\times p_2},\quad \beta_1, \ldots, \beta_{k_1m} \in \mathbb{R}^{k_2},\quad \beta_1,\ldots, \beta_{k_1m} \overset{iid}{\sim} N(0, \sigma_{\ast}^2 I_{k_2}).
	\end{equation*}
	By taking a look at the expression of $\left\|ZZ^\top - \bbE ZZ^\top\right\|$, we know
	\begin{equation*}
	\left\|ZZ^\top - \mathbb{E}ZZ^\top\right\|\geq \max_{1\leq j \leq k_1m}\left|\beta_j^\top\beta_j - k_2\sigma_{\ast}^2\right|.
	\end{equation*}
	Note that $\beta_j^\top \beta_j/\sigma_{\ast}^2 \sim \chi^2_{k_2}$. By the lower bound of right-tail of Chi-square distribution (Corollary 3 in \cite{zhang2018non}), we have $\bbP\left(\beta_j^\top \beta_j -k_2 \sigma_\ast^2 \geq \sigma_{\ast}^2x \right) \geq c\exp\left(-C(x\wedge \frac{x^2}{k_2})\right)$. Since
	\begin{equation*}
	\begin{split}
	\bbP\left(\max_j \beta_j^\top\beta_j - k_2\sigma_*^2 > \sigma_*^2 x\right) & = 1 -\bbP\left(\max_j \beta_j^\top\beta_j - k_2\sigma_*^2 \leq \sigma_*^2 x\right) \\
	& = 1 -\prod_{j=1}^{k_1m}\left(1 - \bbP\left(\beta_j^\top\beta_j - k_2\sigma_*^2 \geq \sigma_*^2 x\right)\right) \\
	& \geq 1 - \left(1-c\exp\left(-C\left(x\wedge \frac{x^2}{k_2}\right)\right)\right)^{k_1m},
	\end{split}
	\end{equation*}
	Taking $x = c_1\left(\sqrt{k_2\log(k_1m)} \vee \log(k_1m)\right)$ for some $c_1$ such that $-C\left(x\wedge \frac{x^2}{k_2}\right) \geq -\log(k_1m)$, we get 
	\begin{equation*}
	\begin{split}
	& \left(1-c\exp\left(-C\left(x\wedge \frac{x^2}{k_2}\right)\right)\right)^{k_1m} \leq \left(1 - \frac{c'}{{(k_1m)}}\right)^{k_1m} \leq e^{-c'}.
	\end{split}
	\end{equation*}
	Thus,
	\begin{equation*}
	\begin{split}
	& \mathbb{E}\left\|ZZ^\top -\mathbb{E}ZZ^\top\right\| \geq \mathbb{E}\max_{1\leq j \leq k_1m}\beta_j^\top\beta_j - k_2\sigma_{\ast}^2\\
	\geq & \sup_{x>0} x\sigma_*^2 \cdot \bbP\left(\max_j \beta_j^\top\beta_j - k_2\sigma_*^2 > x\sigma_*^2\right) \\	
	\geq & c_1(1-e^{-c'})\left(\sqrt{k_2\log(k_1m)} \vee \log(k_1m)\right) \\
	\gtrsim &c\sigma_{\ast}^2\left(\sqrt{k_2\log(k_1m)} + \log(k_1m)\right) \gtrsim c\sigma_{\ast}\sigma_R\sqrt{\log p} + c\sigma_*^2\log p. 
	\end{split}
	\end{equation*}

\end{enumerate}
\end{proof}

\subsection{Proofs for non-Gaussian distributions}\label{sec:proof-nonGaussian}
In this section, we collect the proofs of concentration for the non-Gaussian Wishart-type matrix (Lemma \ref{lm:subGaussian-comparison}, Theorem \ref{th:heavier-tail-wishart} and Theorem \ref{th:heter-wishart-bounded}) in Section \ref{sec:non-Gaussian}.

\begin{proof}[Proof of Lemma \ref{lm:subGaussian-comparison}]
Following the notations and proof idea of Lemma \ref{lm:Gaussian-comparison}, we have the same expansion of $\mathbb{E}\tr\left\{(ZZ^\top - \mathbb{E}ZZ^\top)^q\right\}$ as \eqref{eq:take-variance-out}:
	\begin{equation}\label{eq:take-variance-out-subGaussian}
	\begin{split}
	\mathbb{E}\tr\left\{(ZZ^\top - \mathbb{E}ZZ^\top)^q\right\}= & \sum_{\mathbf{c}\in ([p_1]\times [p_2])^q} \prod_{(i, j)\in [p_1]\times [p_2]} \mathbb{E} Z_{ij}^{\alpha_{ij}(\mathbf{c})}\left(Z_{ij}^2 - \sigma_{ij}^2\right)^{\beta_{ij}(\mathbf{c})}\\
	= & \sum_{\mathbf{c}\in ([p_1]\times [p_2])^q} \prod_{k=1}^q \sigma_{u_k, v_k} \sigma_{u_{k+1}, v_k}\prod_{(i, j)\in [p_1]\times [p_2]} \mathbb{E} G_{ij}^{\alpha_{ij}(\mathbf{c})}\left(G_{ij}^2 - 1\right)^{\beta_{ij}(\mathbf{c})},
	\end{split}
	\end{equation}	
where $G_{ij} := Z_{ij}/\sigma_{ij}$. Different from \eqref{eq:take-variance-out}, $E_{ij}$ in \eqref{eq:take-variance-out-subGaussian} may not have $N(0,1)$ distribution. To overcome this difficulty, we introduce the following lemma to bound $\bbE E_{ij}^\alpha(E_{ij}^2 - 1)^\beta$ via a Gaussian analogue.
\begin{lemma}[Gaussian moments]\label{lm:Gaussian-moments}
	Suppose $G \sim N(0, 1)$, $\alpha, \beta$ are non-negative integers, then 
		\begin{equation}\label{ineq:E-G(G^2-1)}
		\left\{\begin{array}{ll}
		(\alpha+2\beta-1)!! \geq \mathbb{E} G^{\alpha} (G^2 - 1)^\beta \geq (\alpha+2\beta-3)!! \cdot (\alpha + \beta-1), & \text{if $\alpha$ is even};\\
		\mathbb{E} G^{\alpha} (G^2 - 1)^\beta = 0, &  \text{if $\alpha$ is odd}.
		\end{array}\right.
		\end{equation}
	Here for odd $k$, $k!! = k(k-2) \cdots 1$. Especially, $(-1)!! = 1, (-3)!! = -1$. More generally, if $Z$ has symmetric distribution and satisfies
	\begin{equation}
	 \Var(Z) = 1, \quad \|Z\|_{\psi_2} = \sup_{q\geq 1}q^{-1/2}(\mathbb{E}|Z|^q)^{1/q} \leq \kappa.
	\end{equation} 
		Then for any integers $\alpha, \beta\geq 0$,
		\begin{equation}\label{ineq:E-G(G^2-1)sub-Gaussian}
		\left|\mathbb{E} Z^\alpha(Z^2 - 1)^\beta\right| \leq (C\kappa)^{\alpha+2\beta} \mathbb{E} G^\alpha(G^2-1)^\beta
		\end{equation}
		for some uniform constant $C>0$.
\end{lemma}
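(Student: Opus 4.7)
The plan is to dispatch the parity of $\alpha$ first. When $\alpha$ is odd, $z \mapsto z^\alpha(z^2-1)^\beta$ is an odd function, so by symmetry of both the standard normal and the assumed symmetric law of $Z$ the expectation vanishes; this settles the Gaussian equality and makes both sides of the sub-Gaussian bound zero. For $\alpha$ even, my central tool will be an exact linear recursion for $f(\alpha,\beta) := \mathbb{E}\, G^\alpha(G^2-1)^\beta$. Writing $G^\alpha(G^2-1)^\beta = G \cdot G^{\alpha-1}(G^2-1)^\beta$ and applying Stein's identity (integration by parts against the Gaussian density) yields
\begin{equation*}
f(\alpha,\beta) \;=\; (\alpha-1)\, f(\alpha-2,\beta) + 2\beta\, f(\alpha,\beta-1), \qquad \alpha \ge 2.
\end{equation*}

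The key observation is that both candidate Gaussian bounds obey this same recursion. For $g(\alpha,\beta) := (\alpha+2\beta-1)!!$ one has $g(\alpha-2,\beta) = g(\alpha,\beta-1) = (\alpha+2\beta-3)!!$, so $(\alpha-1)g(\alpha-2,\beta) + 2\beta\, g(\alpha,\beta-1) = (\alpha-1+2\beta)(\alpha+2\beta-3)!! = g(\alpha,\beta)$. For $h(\alpha,\beta) := (\alpha+2\beta-3)!!(\alpha+\beta-1)$ the same verification reduces to the algebraic identity $(\alpha-1)(\alpha+\beta-3) + 2\beta(\alpha+\beta-2) = (\alpha+2\beta-3)(\alpha+\beta-1)$, routine to expand. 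Base cases along $\beta=0$ are trivial since $f(\alpha,0)=g(\alpha,0)=h(\alpha,0)=(\alpha-1)!!$; base cases along $\alpha=0$ rest on the fact that $G^2-1$ has all non-negative cumulants $\kappa_k = 2^{k-1}(k-1)!$, hence all non-negative moments, so expanding $G^{2\beta} = \sum_{k=0}^\beta \binom{\beta}{k}(G^2-1)^k$ gives $\mathbb{E}(G^2-1)^\beta \le (2\beta-1)!!$, while the lower bound $h(0,\beta) \le \mathbb{E}(G^2-1)^\beta$ is easily checked for small $\beta$ and propagates through the recursion. Both inequalities $h \le f \le g$ then extend to all even $\alpha$ and all $\beta$ by induction through the non-negative-coefficient recursion.

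For the sub-Gaussian inequality with $\alpha$ even, I would expand by the binomial theorem and apply the triangle inequality,
\begin{equation*}
\bigl|\mathbb{E}\, Z^\alpha(Z^2-1)^\beta\bigr| \;\le\; \sum_{k=0}^\beta \binom{\beta}{k}\, \mathbb{E}|Z|^{\alpha+2k},
\end{equation*}
knowingly sacrificing the cancellation structure in exchange for tractable scalar moment bounds. The hypothesis $\|Z\|_{\psi_2} \le \kappa$ gives $\mathbb{E}|Z|^q \le (\kappa\sqrt{q})^q$, which Stirling converts to $\mathbb{E}|Z|^q \le (C\kappa)^q(q-1)!!$. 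Summing the binomial series and using $(G^2+1)^\beta \le 2^{\beta-1}(G^{2\beta}+1)$ yields $|\mathbb{E}\, Z^\alpha(Z^2-1)^\beta| \le (C\kappa)^{\alpha+2\beta} \cdot 2^\beta (\alpha+2\beta-1)!!$. Dividing by the Gaussian lower bound from the first half reduces the desired conclusion to checking that $2^\beta(\alpha+2\beta-1)/(\alpha+\beta-1) \le (C')^{\alpha+2\beta}$, which is absorbed into the constant for any $\alpha+\beta \ge 2$; the remaining edge cases are trivial since $\Var(Z)=1$ forces $\mathbb{E}(Z^2-1)=0$.

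The main obstacle will be the Gaussian lower bound $f(\alpha,\beta) \ge (\alpha+2\beta-3)!!(\alpha+\beta-1)$. Although it follows cleanly from the recursion once one spots the matching algebraic identity, it is precisely this bound, not the more obvious upper one, that is sharp enough to absorb the $2^\beta$ factor lost to the triangle inequality in the sub-Gaussian step; without the compensating $(\alpha+\beta-1)$ factor, one would be forced into a more delicate Hermite-polynomial expansion that preserves cancellation across terms.
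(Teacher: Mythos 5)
Your Stein-recursion route to the Gaussian two-sided bound is genuinely different from (and cleaner than) the paper's direct binomial expansion with term-pairing, and the recursion $f(\alpha,\beta)=(\alpha-1)f(\alpha-2,\beta)+2\beta f(\alpha,\beta-1)$ plus the verification that both $g(\alpha,\beta)=(\alpha+2\beta-1)!!$ and $h(\alpha,\beta)=(\alpha+2\beta-3)!!(\alpha+\beta-1)$ are exact solutions of it is correct and elegant. The induction with nonnegative coefficients would close everything, were the base cases in place. Likewise, your sub-Gaussian step (binomial expansion, triangle inequality, absorb the $2^\beta$ into $(C\kappa)^{\alpha+2\beta}$, then divide by the Gaussian lower bound) is a valid alternative to the paper's $|Z^2-1|\le Z^2\vee 1$ split.

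However, there is a genuine gap in the $\alpha=0$ base case for the lower bound. You assert that $h(0,\beta)\le \mathbb{E}(G^2-1)^\beta$ ``propagates through the recursion,'' but your main recursion only applies for $\alpha\ge 2$ and cannot be run along the $\alpha=0$ line. The recursion that does live on that line, namely $a_\beta := \mathbb{E}(G^2-1)^\beta = 2(\beta-1)(a_{\beta-1}+a_{\beta-2})$ (also obtained from Stein), does \emph{not} preserve the candidate lower bound: $h(0,\beta)$ is a strict \emph{super}-solution of it, e.g.\ $h(0,4)=45$ while $2\cdot 3\cdot\bigl(h(0,3)+h(0,2)\bigr)=6\cdot(6+1)=42<45$, so an inductive argument of the form ``$a_{\beta-1}\ge h(0,\beta-1)$, $a_{\beta-2}\ge h(0,\beta-2)$ $\Rightarrow a_\beta\ge h(0,\beta)$'' fails. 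You need a separate direct argument for $\mathbb{E}(G^2-1)^\beta \ge (2\beta-3)!!(\beta-1)$; the paper supplies one by writing $\mathbb{E}(G^2-1)^\beta=\sum_{j=0}^\beta(-1)^j\binom{\beta}{j}(2\beta-2j-1)!!$, pairing consecutive terms $(j,j+1)$ for even $j$, checking each pair is nonnegative, and retaining only the $j=0$ pair, which equals exactly $(2\beta-3)!!(\beta-1)$. With that $\alpha=0$ base case established by the paper's pairing (your cumulant argument already handles the $\alpha=0$ upper bound, and $\beta=0$ is an identity), your induction goes through and the rest of your proposal is correct.
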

\begin{proof}[Proof of Lemma \ref{lm:Gaussian-moments}]
See \hyperref[sec:proof-technical-lemma]{Appendix}.
\end{proof}
Now, Combining \eqref{eq:take-variance-out-subGaussian} and \eqref{ineq:E-G(G^2-1)sub-Gaussian}, we have
\begin{equation*}
	\begin{split}
	& \mathbb{E}\tr\left\{(ZZ^\top - \mathbb{E}ZZ^\top)^q\right\} \\
	& \qquad \leq  \sum_{\mathbf{c}\in ([p_1]\times [p_2])^q} \prod_{k=1}^q \sigma_{u_k, v_k} \sigma_{u_{k+1}, v_k}\prod_{(i, j)\in [p_1]\times [p_2]} (C\kappa)^{\alpha_{ij}(\mathbf c) + 2\beta_{ij}(\mathbf c)}\mathbb{E} E_{ij}^{\alpha_{ij}(\mathbf{c})}\left(E_{ij}^2 - 1\right)^{\beta_{ij}(\mathbf{c})}\\
	& \qquad = (C\kappa)^{2q} \sum_{\mathbf{c}\in ([p_1]\times [p_2])^q} \prod_{k=1}^q \sigma_{u_k, v_k} \sigma_{u_{k+1}, v_k}\prod_{(i, j)\in [p_1]\times [p_2]} \mathbb{E} G^{\alpha_{ij}(\mathbf{c})}\left(G^2 - 1\right)^{\beta_{ij}(\mathbf{c})}.
	\end{split}
\end{equation*}	
The rest of the proof can similarly proceed as we did in proving Lemma \ref{lm:Gaussian-comparison}.
\end{proof}

\begin{proof}[Proof of Theorem \ref{th:heavier-tail-wishart}] 
Let $b := 2/\alpha \geq 2$ and $E_{ij} := Z_{ij}/\sigma_{ij}$. By definition, we have $\sup_{q} q^{-\frac{b}{2}} (\bbE|E_{ij}|^q)^{1/q} \leq \kappa$. Thus for any $\alpha,\beta \geq 0$,
\begin{equation}
	\begin{split}
		\left|\bbE E_{ij}^\alpha(E_{ij}^2-1)^\beta\right| & \leq \left|\bbE E_{ij}^\alpha (E_{ij}^2-1)1_{\{|E_{ij}|\leq 1\}} + \bbE E_{ij}^\alpha(E_{ij}^2-1)^\beta 1_{\{|E_{ij}|>1\}}\right| \\
		& \leq 1 + \bbE |E_{ij}|^{\alpha + 2\beta} \leq (C\kappa)^{\alpha+2\beta}(\alpha+2\beta)^{\frac{b(\alpha+2\beta)}{2}}.
	\end{split}
\end{equation}
We introduce the following technical lemma.  
\begin{lemma}\label{lm:heavy-tail-comparison}
Let $G, \tilde G$ be independent $N(0,1)$ and let $F_{ij}$ be i.i.d. copy of $G|\tilde G|^{b-1}$. Then,
\begin{equation}\label{ineq:E-G(G^2-1)heavy-tail}
	\bbE E_{ij}^\alpha (E_{ij}^2-1)^\beta \leq (C_b\kappa)^{\alpha+2\beta}\bbE F_{ij}^\alpha(F_{ij}^2-1)^\beta.
\end{equation}
Here $C_b$ is some constant which only depend on $b$.
\end{lemma}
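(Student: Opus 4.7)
The plan is to reduce \eqref{ineq:E-G(G^2-1)heavy-tail} to comparing the two sides term-by-term in terms of $(\alpha+2\beta)$-th order moment asymptotics. Recall the bound already established right before the lemma, namely
\begin{equation*}
\left|\bbE E_{ij}^\alpha(E_{ij}^2-1)^\beta\right|\;\leq\;(C\kappa)^{\alpha+2\beta}(\alpha+2\beta)^{b(\alpha+2\beta)/2}.
\end{equation*}
I would aim to establish the matching lower bound
\begin{equation*}
\bbE F_{ij}^\alpha(F_{ij}^2-1)^\beta\;\geq\;c_b^{\alpha+2\beta}(\alpha+2\beta)^{b(\alpha+2\beta)/2}
\end{equation*}
for some $c_b>0$ depending only on $b$. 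Dividing the two bounds then yields \eqref{ineq:E-G(G^2-1)heavy-tail} with $C_b = C/c_b$.

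First I would note that, exactly as in the proof of Corollary \ref{cr:heteroskedastic-wishart-concentration}, one may replace $Z$ by the symmetrized copy $Z-Z'$, so that it suffices to prove the lemma assuming $E_{ij}$ is symmetric. Since $F_{ij} = G|\tilde G|^{b-1}$ is also symmetric, both sides of \eqref{ineq:E-G(G^2-1)heavy-tail} vanish for odd $\alpha$, reducing the problem to even $\alpha$.

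For even $\alpha$, I would split on the event $\{F^2 \geq 2\}$ versus $\{F^2 < 2\}$. On $\{F^2 \geq 2\}$, the pointwise inequality $F^2-1 \geq F^2/2$ gives $F^\alpha(F^2-1)^\beta \geq 2^{-\beta}F^{\alpha+2\beta}$, while on $\{F^2 < 2\}$ we have the crude bound $|F^\alpha(F^2-1)^\beta| \leq 2^{\alpha/2}$ (using $|F^2-1|\leq 1$ on this set). This produces
\begin{equation*}
\bbE F^\alpha(F^2-1)^\beta \;\geq\; 2^{-\beta}\bbE F^{\alpha+2\beta} - 2^{\alpha/2+1}.
\end{equation*}
The main term is then computed via the independence of $G$ and $\tilde G$ as
\begin{equation*}
\bbE F^{\alpha+2\beta} \;=\; (\alpha+2\beta-1)!!\,\cdot\,\bbE|\tilde G|^{(b-1)(\alpha+2\beta)},
\end{equation*}
and Stirling's approximation to the double factorial and to $\bbE|\tilde G|^p = 2^{p/2}\Gamma((p+1)/2)/\sqrt{\pi}$ yields the desired lower bound $\bbE F^{\alpha+2\beta} \geq c_b^{\alpha+2\beta}(\alpha+2\beta)^{b(\alpha+2\beta)/2}$.

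The main obstacle is the last step of reconciling the main term with the $2^{\alpha/2+1}$ correction and treating the small values of $\alpha+2\beta$. For $\alpha+2\beta$ larger than some $N(b)$, the Stirling lower bound easily swallows the correction. The finitely many small cases (in particular $\alpha=\beta=0$ gives $1=1$, and $\alpha=0,\,\beta=1$ gives $\bbE(F^2-1)=\bbE|\tilde G|^{2(b-1)}-1\geq 0$ since $b\geq 2$, while the LHS equals $0$) can then be verified directly and absorbed into the constant $C_b$. An additional technicality is ensuring that the Stirling estimates are sharp enough when $b$ is close to $2$, which is handled by keeping the $(b-1)^{(b-1)/2}$ factor inside the base $c_b$.
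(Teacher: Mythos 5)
Your argument is correct but takes a genuinely different route from the paper's. The paper proceeds by writing $\bbE F^\alpha(F^2-1)^\beta$ as the exact alternating binomial sum $\sum_{j}(-1)^j\binom{\beta}{j}\bbE F^{\alpha+2\beta-2j}$, computing each moment of $F = G|\tilde G|^{b-1}$ in terms of Gamma functions, and then pairing consecutive $j$-terms (even $j$ with $j+1$) to show --- using the monotonicity of $\Gamma$ and the same combinatorial bookkeeping as in the proof of Lemma \ref{lm:Gaussian-moments} --- that the sum is bounded below by the $j=0$ contribution; explicit Gamma lower bounds (Batir) then give the polynomial-growth lower bound. Your proposal instead truncates: on $\{F^2\geq 2\}$ you use the pointwise bound $(F^2-1)^\beta \geq 2^{-\beta}F^{2\beta}$ to reduce immediately to $\bbE F^{\alpha+2\beta}$ (a simple product of moments by independence of $G,\tilde G$), and you control the complementary event by a crude $2^{\alpha/2}$. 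This avoids the term-pairing combinatorics entirely, at the cost of needing to verify that the main term dominates the correction --- which, as you note, holds once $\alpha+2\beta$ exceeds a $b$-dependent threshold, with finitely many small cases checked directly. Both proofs need this small-case verification, and both rely on the upper bound on $\bbE E^\alpha(E^2-1)^\beta$ established immediately before the lemma. You also correctly preempt the two main pitfalls: the reduction to even $\alpha$ via symmetrization (without which the lemma would be false for odd $\alpha$), and the degenerate case $\alpha=0,\beta=1$ where the RHS expectation can vanish (e.g.\ $b=2$) --- one minor slip: the LHS there is $\bbE(E^2-1)\leq 0$, not necessarily $=0$, since the theorem only assumes $\Var(Z_{ij})\leq\sigma_{ij}^2$, but this does not affect the conclusion. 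Overall your approach is somewhat more elementary than the paper's and equally rigorous.
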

\begin{proof}[Proof of Lemma \ref{lm:heavy-tail-comparison}]
	See \hyperref[sec:proof-technical-lemma]{Appendix}.
\end{proof}

Now let $G_{ij}, \tilde G_{ij}$ be i.i.d. $N(0,1)$ and define $F_{ij} = G_{ij}|\tilde G_{ij}|^{b-1}$. Let $\tilde Z$ be a random matrix with entries $\tilde Z_{ij} = \sigma_{ij}F_{ij}$. Then, by Lemma \ref{lm:heavy-tail-comparison} and the similar proof in Lemma \ref{lm:subGaussian-comparison}, we have
\begin{equation*}
	\bbE \tr\left\{(ZZ^\top - \bbE ZZ^\top)^q\right\} \leq (C_b\kappa)^{2q} \bbE \tr\left\{(\tilde Z \tilde Z - \bbE \tilde Z \tilde Z^\top)^q\right\}.
\end{equation*}
Thus,
\begin{equation}
	\begin{split}
		\bbE\left\|ZZ^\top - \bbE ZZ^\top\right\| & \leq  \left(\bbE \tr\left\{(ZZ^\top - \bbE ZZ^\top)^{2q}\right\}\right)^{1/2q} \\
		& \leq (C_b\kappa)^2 \left(\bbE \tr\left\{(\tilde Z\tilde Z^\top - \bbE \tilde Z \tilde Z^\top)^{2q}\right\}\right)^{1/2q}.
	\end{split}
\end{equation}
Let $q = \lceil \log (p_1 \wedge p_2) \rceil$, now it suffices to upper bound $\left(\bbE\left\|\tilde Z\tilde Z^\top - \bbE Z\tilde Z^\top\right\|^{2q}\right)^{1/2q}$. We define $\tilde \sigma_C^2 = \max_j\sum_{i=1}^{p_1} \sigma_{ij}^2|\tilde{G}_{ij}|^{2b-2}$, $\tilde{\sigma}_R^2 = \max_i\sum_{j=1}^{p_2} \sigma_{ij}^2|\tilde{G}_{ij}|^{2b-2}$ and $\tilde \sigma_* = \max_i \sigma_{ij}|\tilde{G}_{ij}|^{b-1}$ and apply Theorem \ref{th:wishart-Gaussian} conditionally on $\tilde{G}$:
\begin{equation*}
	\begin{split}
		&\bbE\left[\tr\left\{\left(\tilde Z \tilde Z^\top - \bbE \tilde Z\tilde Z^\top\right)^{2q}\right\}\left|\tilde G\right.\right] \\
		& \qquad \leq C^{2q}\left(\tilde\sigma_C^2 + \tilde\sigma_C\tilde\sigma_R + \sigma_C\sigma_*\sqrt{\log(p_1\wedge p_2)} + \sigma_*^2\log(p_1\wedge p_2)\right)^{2q}.
	\end{split}
\end{equation*}
Then,
\begin{equation}\label{ineq:heavy-tail-decomposion}
	\begin{split}
		& \left(\bbE\tr\left\{\left(\tilde Z \tilde Z^\top - \bbE \tilde Z\tilde Z^\top\right)^{2q}\right\}\right)^{1/2q} \\
		& \qquad \leq C\left(\left\|\tilde\sigma_C^2\right\|_{2q} + \left\|\tilde\sigma_C\sigma_R\right\|_{2q} + \left\|\tilde\sigma_R\tilde\sigma_*\right\|_{2q}\sqrt{\log(p_1\wedge p_2)} + \left\|\tilde\sigma_*^2\right\|_{2q}\log(p_1\wedge p_2)\right).
	\end{split}
\end{equation}
Here $\|X\|_{2q}:= (\bbE |X|^{2q})^{1/2q}$ is the $\ell_{2q}$-norm of random variable $X$. Now we bound $\left\|\tilde{\sigma}_*^2\right\|_{2q}$, $\left\|\tilde{\sigma}_R^2\right\|_{2q}$ and $\left\|\tilde\sigma_C^2\right\|_{2q}$ separately. 
\begin{itemize}
	\item $\left\|\tilde\sigma_*^2\right\|_{2q}$. For any $a>0$, since 
	\begin{equation*}
		\bbP\left(\max_{i,j} |\tilde G_{ij}| > t\right) \leq 2\exp\left(-\frac{t^2}{2}+\log(p_1p_2)\right) \leq 2\exp\left(-\frac{t^2}{4}\right), \quad
		 \forall t > 2\sqrt{\log(p_1p_2)},
	\end{equation*}
	integration yields
\begin{equation*}
	\begin{split}
		\bbE \max_{i,j}|\tilde G_{ij}|^{a} &= \int_{0}^\infty \bbP\left(\max_{i,j} |\tilde G_{ij}| > t^{1/a}\right) dt \leq \left(2\sqrt{\log(p_1p_2)}\right)^a + \int_{0}^\infty 2e^{-\frac{t^{2/a}}{4}} dt \\
		&= \left(4\log(p_1p_2)\right)^{a/2} + 4a\Gamma\left(\frac{a}{2}\right).
	\end{split}
\end{equation*}
Then it follows that
	\begin{equation}\label{ineq:heavy-tail-sigmaask}
	\begin{split}
	\left\|\tilde\sigma_*^2\right\|_{2q} & \leq \sigma_*^2\left(\bbE \max_{i,j}|\tilde G_{ij}|^{4(b-1)q}\right)^{1/2q} \\
	& \leq \sigma_*^2 \left((4\log(p_1p_2))^{2(b-1)q} + 16(b-1)q\Gamma\left(2(b-1)q\right)\right)^{1/2q}  \\
	& \lesssim \sigma_*^2 \left(\log(p_1p_2)^{b-1} + q^{b-1}\right)\lesssim \sigma_*^2 \log^{b-1}(p_1 \vee p_2).
	\end{split}
	\end{equation}
	\item $\left\|\tilde \sigma_C^2\right\|_{2q}$ and $\left\|\tilde \sigma_R^2\right\|$. By the moment bound of supremum of empirical process \cite[Theorem 11]{boucheron2005moment}, 
	\begin{equation}\label{ineq:heavy-tail-sigmaC-1}
		\begin{split}
			\left\|\tilde \sigma_C^2 \right\|_{2q} & = \left\|\max_j \sum_{i=1}^{p_1}\sigma_{ij}^2|\tilde G_{ij}|^{2b-2}\right\|_{2q} \\
			& \lesssim \bbE\max_{1\leq j\leq p_2}\sum_{i=1}^{p_1}\sigma_{ij}^2|\tilde G_{ij}|^{2b-2} + q\left\|\tilde \sigma_*^2\right\|_{2q}\\
			& \leq \bbE\max_j\sum_{i=1}^{p_1}\left(\sigma_{ij}^2|\tilde{G}_{ij}|^{2b-2} - \bbE\sigma_{ij}^2|\tilde G_{ij}|^{2b-2}\right) +\sigma_C^2 + q\left\|\tilde\sigma_*^2\right\|_{2q}.
		\end{split}
	\end{equation}
	Denote $Y_j = \sum_{i=1}^{p_1}\left(\sigma_{ij}^2|\tilde{G}_{ij}|^{2b-2} - \bbE\sigma_{ij}^2|\tilde G_{ij}|^{2b-2}\right)$, it suffices to bound $\bbE\max_{j}Y_j$. To this end, we introduce the following Generalized Bernstein-Orlicz norm defined in \cite{kuchibhotla2018moving}. For a random variable $X$, let 
$$
\left\|X\right\|_{\Psi_{\alpha,L}}:=\inf\left\{\eta>0: \bbE[\Psi_{\alpha,L}(|X|/\eta)]\leq 1\right\}
$$ be the $\Psi_{\alpha,L}$-norm where $\Psi_{\alpha, L}$ is defined via its inverse function
$$
\Psi_{\alpha,L}^{-1}(t) := \sqrt{\log(1+t)} + L(\log(1+t))^{1/\alpha},\qquad \forall t \geq 0.
$$
Now fix $j\in [p_2]$ and let $\alpha = 1/(b-1)$ and $L=\frac{4^{b-1}\sigma_*^2}{\sqrt{2}\sqrt{\sum_{i=1}^{p_1}\sigma_{ij}^4}}$. By \cite[Theorem 3.1]{kuchibhotla2018moving},
$$
		\left\|Y_j\right\|_{\Psi_{\alpha,L}} \leq C\sqrt{\sum_{i=1}^{p_1}\sigma_{ij}^4};
$$
$$
\bbP\left(|Y_j| \geq C\sqrt{\sum_{i=1}^{p_1}\sigma_{ij}^4}\left\{\sqrt{t}+Lt^{1/\alpha}\right\}\right) \leq 2\exp(-t),\qquad t\geq 0.
$$
This yields
\begin{equation*}
	\begin{split}
	\bbP\left(|Y_j| \geq C\left\{\sigma_C\sigma_*\sqrt{t}+\sigma_*^2t^{b-1}\right\}\right) \leq 2\exp(-t),\qquad t\geq 0,
	\end{split}
\end{equation*}
which can be rewritten as 
\begin{equation*}
	\begin{split}
	\bbP\left(|Y_j| \geq t\right) \leq 2\exp\left(-c\left(\frac{t^2}{\sigma_C^2\sigma_*^2} \wedge \left(\frac{t}{\sigma_*^2}\right)^{1/(b-1)}\right)\right),\qquad t\geq 0.
	\end{split}
\end{equation*}
Applying union bound, we get
\begin{equation*}
	\begin{split}
	\bbP\left(\max_j |Y_j| \geq t \right) \leq 2\exp\left(\log p_2-c\left(\frac{t^2}{\sigma_C^2\sigma_*^2} \wedge \left(\frac{t}{\sigma_*^2}\right)^{1/(b-1)}\right)\right),\qquad t\geq 0.
	\end{split}
\end{equation*}

Now it follows that
\begin{equation*}
	\begin{split}
		\bbE\max_j Y_j & \leq \bbE\max_j |Y_j| = \int_0^\infty  \bbP\left(\max_j|Y_j|>t\right) dt \\
		& \leq C\left(\sigma_C\sigma_*\sqrt{\log p_2} + \sigma_*^2\log^{b-1}(p_2)\right) + \int_{C\left(\sigma_C\sigma_*\sqrt{\log p_2} + \sigma_*^2\log^{b-1}(p_2)\right)}^\infty \bbP\left(\max_j|Y_j|>t\right)dt \\
		& \leq C\left(\sigma_C\sigma_*\sqrt{\log p_2} + \sigma_*^2\log^{b-1}(p_2)\right) + \int_{0}^\infty \left(\exp\left(-c\frac{t^2}{\sigma_C\sigma_*}\right) + \exp\left(-c\frac{t^{1/(b-1)}}{\sigma_*^{2/(b-1)}}\right)\right)dt \\
		& \lesssim \sigma_C\sigma_*\sqrt{\log p_2} + \sigma_*^2\log^{b-1}(p_2) \\
		& \lesssim \sigma_C^2 + \sigma_*^2\log^{b-1}(p_2). \\
	\end{split}
\end{equation*}
Combining with \eqref{ineq:heavy-tail-sigmaC-1}, we obtained
\begin{equation}\label{ineq:heavy-tail-sigmaC}
	\left\|\tilde\sigma_C^2\right\|_{2q} \lesssim \sigma_C^2 + \sigma_*^2\log^{b-1}(p_1\vee p_2)\log(p_1\wedge p_2).
\end{equation}
Similarly we can obtain
\begin{equation}\label{ineq:heavy-tail-sigmaR}
		\begin{split}
			\left\|\tilde\sigma_R\right\|_{2q} \lesssim \sigma_R^2 + \sigma_*^2\log^{b-1}(p_1\vee p_2)\log(p_1\wedge p_2).
		\end{split}
\end{equation}

\end{itemize}
Combining \eqref{ineq:heavy-tail-decomposion}, \eqref{ineq:heavy-tail-sigmaask}, \eqref{ineq:heavy-tail-sigmaC}, \eqref{ineq:heavy-tail-sigmaR} and applying Cauchy-Schwarz inequality, we obtain
\begin{equation*}
	\begin{split}
	\bbE\left\|ZZ^\top - \bbE ZZ^\top\right\| & \lesssim \sigma_C^2 + \sigma_R\sigma_C  + \sigma_R\sigma_*\log^{(b-1)/2}(p_1\vee p_2)\sqrt{\log(p_1\wedge p_2)} \\
	& \qquad + \sigma_*^2\log^{b-1}(p_1\vee p_2)\log(p_1\wedge p_2).
	\end{split}
\end{equation*}
This completes the proof.  
\end{proof}

\begin{proof}[Proof of Theorem \ref{th:heter-wishart-bounded}]
We first prove the following comparison Lemma. 
\begin{lemma}\label{lm:Z-M-comparison-uniform}
	Suppose $Z$ is a $p_1$-by-$p_2$ random matrix with independent entries satisfying $\mathbb{E}Z_{ij} = 0, \Var(Z_{ij}) = \sigma_{ij}^2$, $|Z| \leq 1$. $H$ is an $m_1$-by-$m_2$ dimensional matrix with i.i.d. standard Gaussian entries. When $q\geq 1$, $m_1 = \lceil\sigma_C^2\rceil+q-1$, $m_2 = \lceil\sigma_R^2\rceil+q-1$, we have
	\begin{equation*}
	\mathbb{E} \tr\left\{(ZZ^\top - \mathbb{E}ZZ^\top)^{q}\right\} \leq \left(\frac{p_1}{m_1}\wedge \frac{p_2}{m_2}\right)\mathbb{E} \tr\left\{(HH^\top - \mathbb{E}HH^\top)^{q}\right\}.
	\end{equation*}
\end{lemma}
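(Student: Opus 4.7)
The plan is to adapt the cycle-counting moment method from the proof of Lemma \ref{lm:Gaussian-comparison} to the bounded setting, aiming for the same dimensional ratio $p_1/m_1\wedge p_2/m_2$ without incurring the $(C\kappa)^{2q}$ factor appearing in Lemma \ref{lm:subGaussian-comparison}. Assuming $B=1$ without loss of generality, the starting point is the identical trace expansion
\begin{equation*}
\mathbb{E}\tr\left\{(ZZ^\top - \mathbb{E}ZZ^\top)^q\right\} = \sum_{\mathbf{c}\in([p_1]\times[p_2])^q} \prod_{(i,j)} \mathbb{E}\, Z_{ij}^{\alpha_{ij}(\mathbf{c})}(Z_{ij}^2 - \sigma_{ij}^2)^{\beta_{ij}(\mathbf{c})},
\end{equation*}
with the cycles grouped by shape $\mathbf{s}_0$ as in Step~1 of Lemma \ref{lm:Gaussian-comparison}. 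The combinatorial infrastructure (definitions of $\alpha_{ij}$, $\beta_{ij}$, $m_L(\mathbf{s}_0)$, $m_R(\mathbf{s}_0)$, the even-shape set $\mathcal{S}_{p_1,p_2}$, and Lemma \ref{lm:shape-bound}) transfers unchanged.

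The crucial new ingredient is a per-edge moment estimate replacing the identity $\mathbb{E}Z_{ij}^\alpha(Z_{ij}^2-\sigma_{ij}^2)^\beta = \sigma_{ij}^{\alpha+2\beta}\mathbb{E}G^\alpha(G^2-1)^\beta$ that is available only in the Gaussian case. For $|Z|\leq 1$ with $\mathbb{E}Z=0$ and $\Var(Z)=\sigma^2$, the bound $|Z|^k \leq Z^2$ for $k\geq 2$ gives $\mathbb{E}|Z|^k\leq \sigma^2$, so binomial expansion of $(Z^2-\sigma^2)^\beta$ yields $|\mathbb{E}Z^\alpha(Z^2-\sigma^2)^\beta|\lesssim \sigma^2$ whenever $\alpha+2\beta\geq 2$ and $(\alpha,\beta)\neq(0,1)$. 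Since $\sigma^2\leq 1$, this bound pairs with $\mathbb{E}G^\alpha(G^2-1)^\beta\geq 1$ from Lemma \ref{lm:Gaussian-moments} (for even $\alpha\geq 2$), allowing one to match the Gaussian contribution shape by shape. The odd-$\alpha$ contributions vanish on the Gaussian side, and I would handle them by a preliminary symmetrization step — replacing $Z$ with $(Z-Z')/\sqrt{2}$ for an independent copy $Z'$, which yields a symmetric bounded matrix with the same variance structure — at the cost of a factor that can be absorbed into the dimensional inflation $m_1=\lceil\sigma_C^2\rceil+q-1$.

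With the per-shape comparison in hand, the remainder of the argument mirrors Step~2 of Lemma \ref{lm:Gaussian-comparison}: the Gaussian side yields the cycle count $m_1(m_1-1)\cdots(m_1-m_L+1)\cdot m_2(m_2-1)\cdots(m_2-m_R+1)\geq m_1\sigma_C^{2m_L-2}\sigma_R^{2m_R}$, while Lemma \ref{lm:shape-bound} bounds the bounded side's cycle sum by $p_1\sigma_C^{2m_L-2}\sigma_R^{2m_R}$ (and the symmetric version with $p_2, m_2$). Taking the minimum ratio over all shapes gives the claimed $p_1/m_1 \wedge p_2/m_2$.

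The main obstacle is the pointwise moment comparison: strictly speaking, for small $\sigma$ and high edge multiplicity $\alpha+2\beta>2$, the bounded moment $\sim\sigma^2$ can exceed $\sigma^{\alpha+2\beta}\mathbb{E}G^\alpha(G^2-1)^\beta$ (e.g., $\sigma^2>3\sigma^4$ for $\sigma<1/\sqrt{3}$ at $\alpha=4,\beta=0$). Overcoming this requires a more global accounting: shapes whose edges are each visited exactly twice (the dominant tree-like regime) match Gaussian contributions exactly, while shapes with heavier edges have $m_L+m_R$ strictly smaller than $q+1$, so their cycle-count factor $p_1^{m_L}p_2^{m_R}$ is suppressed by powers of $p_1\wedge p_2$ relative to $m_1^{m_L}m_2^{m_R}$. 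The delicate combinatorial step is to verify that this suppression uniformly compensates for the excess in the bounded per-edge moment, which is where I would expect most of the technical work to reside.
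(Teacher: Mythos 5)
Your core idea is correct and matches the paper: the per-edge moment bound $|\mathbb{E}Z_{ij}^\alpha(Z_{ij}^2-\sigma_{ij}^2)^\beta|\leq\sigma_{ij}^2$ (using $|Z_{ij}|\leq 1$), paired with the lower bound $\mathbb{E}G^\alpha(G^2-1)^\beta\geq 1$ for all even-shape multiplicities $(\alpha,\beta)$, and then the shape-level decomposition with Lemma \ref{lm:shape-bound} and the Gaussian cycle-count lower bound. That is exactly what the paper does.

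However, your final paragraph identifies a spurious obstacle and consequently proposes an unnecessary detour. You worry that for heavy edges $\alpha+2\beta>2$ and small $\sigma$, the bounded moment $\sim\sigma^2$ can exceed $\sigma^{\alpha+2\beta}\mathbb{E}G^\alpha(G^2-1)^\beta$. But that is not the comparison the argument requires. The per-edge estimate in the bounded case is $\mathbb{E}Z_{ij}^\alpha(Z_{ij}^2-\sigma_{ij}^2)^\beta\leq\sigma_{ij}^2\cdot\mathbb{E}G^\alpha(G^2-1)^\beta$, which after grouping by shape produces the weight $\prod_{(i,j)\ \text{visited}}\sigma_{ij}^2$ (one factor of $\sigma_{ij}^2$ per distinct visited edge, \emph{not} $\sigma_{ij}^{\alpha_{ij}+2\beta_{ij}}$). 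The proof of Lemma \ref{lm:shape-bound} bounds the shape sum by selecting $m_L+m_R-1$ ``discovery'' edges, each visited at least twice, taking exactly one $\sigma_{ij}^2$ per discovery edge and dominating the remaining visited-edge factors by $1$ (using $\sigma_\ast\leq 1$). That argument applies verbatim to the weight $\prod_{(i,j)\ \text{visited}}\sigma_{ij}^2$, yielding $\sum_{\mathbf{c}:\mathbf{s}(\mathbf{c})=\mathbf{s}_0}\prod_{(i,j)\ \text{visited}}\sigma_{ij}^2\leq p_1\sigma_C^{2m_L-2}\sigma_R^{2m_R}\wedge p_2\sigma_C^{2m_L}\sigma_R^{2m_R-2}$. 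On the Gaussian side, the cycle count $m_1(m_1-1)\cdots(m_1-m_L+1)\,m_2\cdots(m_2-m_R+1)\geq m_1\sigma_C^{2m_L-2}\sigma_R^{2m_R}$ holds shape by shape, giving the ratio $p_1/m_1\wedge p_2/m_2$ with no need for the global tree-versus-heavy-edge bookkeeping you sketch. In short: there is no delicate combinatorial compensation step; the shape bound already does the work once you compare the right quantities.

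One smaller point: your symmetrization via $(Z-Z')/\sqrt{2}$ changes the a.s. bound from $1$ to $\sqrt{2}$, which cannot be absorbed into $m_1,m_2$ for free. The paper's own proof of this lemma treats $Z_{ij}$ as if it were symmetric without introducing such a substitution, so if you do want a clean handling of odd moments, you would need to account for that constant explicitly rather than wave it into the dimensional inflation.
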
  
\begin{proof}
Recall $Z\in \mathbb{R}^{p_1\times p_2}$, $|Z|\leq 1$ almost surely, $\mathbb{E} Z_{ij}=0$, $\Var(Z_{ij}) = \sigma_{ij}^2$. Similarly as the proof of Lemma \ref{lm:Gaussian-comparison}, let $\mathbf{c} = (u_1,v_1,\ldots, u_q, v_q)\in ([p_1\times [p_2]])^q$ be the cycle of length $2q$ on bipartite graph $[p_1]\to [p_2]$, $\alpha_{ij}(\mathbf{c})$ and $\beta_{ij}(\mathbf{c})$ be defined as \eqref{eq:def-alpha-beta-L}. We similarly have the following expansion,
\begin{equation*}
\begin{split}
& \mathbb{E}\left\{(ZZ^\top - \mathbb{E}ZZ^\top)^q\right\} = \sum_{u_1,\ldots, u_q \in [p_1]} \mathbb{E} \prod_{j=1}^q (ZZ^\top - \mathbb{E}ZZ^\top)_{u_j, u_{j+1}}\\
= & \mathbb{E} \sum_{u_1,\ldots, u_q \in [p_1]} \prod_{j=1}^q \left\{\sum_{v_j \in [p_2]} Z_{u_j, v_j}(Z^\top)_{v_j, u_{j+1}} - 1_{\{u_j = u_{j+1}\}}\sum_{v_j \in [p_2]}\mathbb{E} Z_{u_j, v_j}^2 \right\}\\
= & \sum_{\substack{u_1, \ldots, u_{q} \in [p_1]\\v_1,\ldots, v_q \in [p_2]}} \mathbb{E}\prod_{j=1}^q \left(Z_{u_j, v_j} Z_{u_{j+1}, v_j} - \sigma_{u_j, v_j}^21_{\{u_j = u_{j+1}\}}\right)\\
= & \sum_{\mathbf{c}\in ([p_1]\times [p_2])^q} \prod_{(i, j)\in [p_1]\times [p_2]} \mathbb{E} Z_{ij}^{\alpha_{ij}(\mathbf{c})}\left(Z_{ij}^2 - \sigma_{ij}^2\right)^{\beta_{ij}(\mathbf{c})}.
\end{split}
\end{equation*}
Since $Z_{ij}$ is symmetric distributed and $\mathbb{E}Z^2_{ij} = \sigma_{ij}^2$, we have 
$$\mathbb{E} Z_{ij}^\alpha\left(Z_{ij}^2 - \sigma_{ij}^2\right)^\beta = 0,\quad \text{if $\alpha$ is odd or $\{\alpha = 0, \beta = 1\}$}.$$
For any $(i,j) \in [p_1]\times [p_2]$, we shall note that $0\leq Z_{ij}^2\leq 1$ and $|Z_{ij}^2 - \sigma_{ij}^2|\leq 1$. If $\alpha\geq 2$ and $\alpha$ is even,
\begin{equation*}
\mathbb{E}\left|Z_{ij}^{\alpha_{ij}(\mathbf{c})}  (Z_{ij}^2 - \sigma_{ij}^2)^{\beta_{ij}(\mathbf{c})}\right| = \mathbb{E} |Z_{ij}^2|\cdot |Z_{ij}^{\alpha_{ij}(\mathbf{c})-2}  (Z_{ij}^2 - \sigma_{ij}^2)^{\beta_{ij}(\mathbf{c})}| \leq \mathbb{E}Z_{ij}^2 =  \sigma_{ij}^2;
\end{equation*}
if $\alpha \geq 0$, $\beta \geq 2$, one has
\begin{equation*}
\begin{split} 
& \mathbb{E}\left|Z_{ij}^{\alpha_{ij}(\mathbf{c})}  (Z_{ij}^2 - \sigma_{ij}^2)^{\beta_{ij}(\mathbf{c})}\right| \leq  \mathbb{E}(Z_{ij}^2 - \sigma_{ij}^2)^2\cdot \left|Z_{ij}^{\alpha_{ij}(\mathbf{c})}(Z_{ij}^2 - \sigma_{ij}^2)^{\beta_{ij}(\mathbf{c})-2}\right|\\
\leq & \mathbb{E} Z_{ij}^4 - \sigma_{ij}^4 \leq \mathbb{E}Z_{ij}^2 \cdot \|Z_{ij}\|_\infty^2 - \sigma_{ij}^4 = \sigma_{ij}^2 - \sigma_{ij}^4 \leq \sigma_{ij}^2.
\end{split}
\end{equation*}
Therefore, for any $\alpha, \beta \geq 0$, we have
\begin{equation*}
\begin{split}
\mathbb{E}Z_{ij}^{\alpha} (Z_{ij}^2 - \sigma_{ij}^2)^\beta\left\{\begin{array}{ll}
\leq \sigma_{ij}^2 \cdot \mathbb{E}G^{\alpha} \left(G^2 - 1\right)^{\beta}, & \alpha \text{ is even and } (\alpha,\beta) \neq (0,0);\\
= 1, & \alpha = 0, \beta = 0;\\
= 0, & \alpha \text{ is odd}.\\
\end{array}\right.
\end{split}
\end{equation*}
Here, $G\sim N(0, 1)$. Thus,
\begin{equation*}
\begin{split}
& \mathbb{E}\left\{(ZZ^\top - \mathbb{E}ZZ^\top)^q\right\} \\
\leq & \sum_{\mathbf{c}\in ([p_1]\times [p_2])^q} \prod_{(i, j)\in [p_1]\times [p_2]} \sigma_{ij}^2 1_{\left\{(\alpha_{ij}(\mathbf{c}), \beta_{ij}(\mathbf{c}))\neq (0, 0)\right\}}\mathbb{E} G^{\alpha_{ij}(\mathbf{c})}\left(G^2 - 1\right)^{\beta_{ij}(\mathbf{c})}.\\
\end{split}
\end{equation*}
Let $\mathbf{s}$ be the shape of any loop $\mathbf{c} \in ([p_1]\times [p_2])^q$, $m_L(\mathbf{s})$ and $m_R(\mathbf{s})$ be the number of distinct left and right nodes respectively visited by any $\mathbf{c}$ with shape $\mathbf{s}$; $m_{\alpha, \beta}(\mathbf{s}) = m_{\alpha, \beta}(\mathbf{c})$ is defined as \eqref{eq:def-m-alpha-beta}). Then,
\begin{equation*}
\begin{split}
& \mathbb{E}\left\{(ZZ^\top - \mathbb{E}ZZ^\top)^q\right\}\\
\leq & \sum_{\mathbf{c}\in ([p_1]\times [p_2])^q} \prod_{(i, j)\in [p_1]\times [p_2]} \sigma_{ij}^2 1_{\left\{(\alpha_{ij}(\mathbf{c}), \beta_{ij}(\mathbf{c}))\neq (0, 0)\right\}}\mathbb{E} G^{\alpha_{ij}(\mathbf{c})}\left(G^2 - 1\right)^{\beta_{ij}(\mathbf{c})}\\
= & \sum_{\mathbf{s}} \sum_{\substack{\mathbf{c}:\\\mathbf{c} \text{ has shape }\mathbf{s}}} \cdot \prod_{\substack{(i, j)\in [p_1]\times [p_2]\\\mathbf{c} \text{ pass $(i, j)$ for positive even times}}} \sigma_{ij}^2 \cdot \prod_{\substack{\alpha, \beta\geq 0\\\alpha \text{ is even}}}\left\{G^\alpha(G^2-1)^\beta\right\}^{m_{\alpha, \beta}(\mathbf{s})} \\
\leq & \sum_{\mathbf{s}} \left(p_1 \sigma_C^{2m_L(\mathbf{s})-2}\sigma_R^{2m_R(\mathbf{s})} \wedge p_2 \sigma_C^{2m_L(\mathbf{s})}\sigma_R^{2m_R(\mathbf{s})-2} \right) \cdot \prod_{\substack{\alpha, \beta\geq 0\\\alpha \text{ is even}}}\left\{G^\alpha(G^2-1)^\beta\right\}^{m_{\alpha, \beta}(\mathbf{s})}.
\end{split}
\end{equation*}
On the other hand, we have
\begin{equation*}
\begin{split}
 & \mathbb{E}\tr\left((HH^\top - \mathbb{E}HH^\top)^q\right) = \mathbb{E}\tr\left((HH^\top - m_2 I_{m_1})^q\right)\\
= & \sum_{\mathbf{s}} m_1\cdots (m_1 - m_L(\mathbf{s})+1)\cdot m_2\cdots (m_2 - m_R(\mathbf{s})+1)\cdot\prod_{\alpha, \beta \geq 0} \mathbb{E} \left\{G^\alpha (G^2 - 1)^\beta\right\}^{m_{\alpha, \beta}(\mathbf{s})}.
\end{split}
\end{equation*}
Provided that $m_1 = \lceil\sigma_C^2\vee 1\rceil + q-1$ and $m_2 = \lceil\sigma_R^2\vee 1\rceil + q-1$, we have
$$\sigma_C^{2m_L(\mathbf{s}) - 2} \leq \frac{m_1(m_1-1)\cdots (m_1-m_L(\mathbf{s})+1)}{m_1}, \quad \sigma_R^{2m_R(\mathbf{s})} \leq m_2(m_1-2)\cdots (m_2-m_L(\mathbf{s})+1);$$
$$\sigma_C^{2m_L(\mathbf{s})} \leq m_1(m_1-1)\cdots (m_1-m_L(\mathbf{s})+1), \quad \sigma_R^{2m_R(\mathbf{s}) - 2} \leq \frac{m_2(m_1-2)\cdots (m_2-m_L(\mathbf{s})+1)}{m_2}.$$
Thus
\begin{equation*}
\mathbb{E}\tr\left\{(ZZ^\top - \mathbb{E}ZZ^\top)^q \right\} \leq \left(\frac{p_1}{m_1} \wedge \frac{p_2}{m_2}\right) \cdot \mathbb{E}\tr\left\{(HH^\top - \mathbb{E}HH^\top)^q\right\},
\end{equation*}
which has finished the proof of this lemma.
\end{proof}
Assume $B=1$ without loss of generality. With Lemma \ref{lm:Z-M-comparison-uniform} and Lemma \ref{lm:iid-Gaussian-moment}, the proof of Theorem ref{th:heter-wishart-bounded} is the same as Theorem \ref{th:wishart-Gaussian}.
\end{proof}

\subsection{Proof for tail bounds}\label{sec:proof-higher-moment}
\begin{proof}[Proof of Theorem \ref{th:higher-moment-tail-bound}]
Without loss of generality, we assume $\sigma_\ast = 1$. Let $q \geq 2$ be an even integer. Let
$$m_1 = \lceil \sigma_C^2\rceil + qb-1, \quad m_2 = \lceil \sigma_R^2 \rceil + qb-1.$$
By Lemmas \ref{lm:Gaussian-comparison} and \ref{lm:iid-Gaussian-moment},
\begin{equation*}
\begin{split}
& \mathbb{E}\tr\left\{\left(ZZ^\top -\mathbb{E}ZZ^\top\right)^{qb}\right\} \leq \left(\frac{p_1}{m_1}\wedge \frac{p_2}{m_2}\right)\mathbb{E}\tr\left((HH^\top - \mathbb{E}HH^\top)^{qb}\right)\\
\leq & \left(\frac{p_1}{m_1}\wedge \frac{p_2}{m_2}\right) (m_1\wedge m_2)\left(2\sqrt{m_1m_2}+m_1+4(\sqrt{m_1}+\sqrt{m_2})\sqrt{qb}+2qb\right)^{qb}\\
\leq & \left(p_1\wedge p_2\right)\left(2\sqrt{m_1m_2}+m_1+4(\sqrt{m_1}+\sqrt{m_2})\sqrt{qb}+2qb\right)^{qb}.
\end{split}
\end{equation*}
Thus,
\begin{equation*}
\begin{split}
& \mathbb{E}\left\|ZZ^\top -\mathbb{E}ZZ^\top\right\|^b \leq \left(\mathbb{E}\tr\left(ZZ^\top - \mathbb{E}ZZ^\top\right)^{qb}\right)^{1/q}\\
\leq & \left(p_1\wedge p_2\right)^{1/q}\left(2\sqrt{m_1m_2}+m_1+4(\sqrt{m_1}+\sqrt{m_2})\sqrt{qb}+2qb\right)^{b}\\
= & \left\{\left(p_1\wedge p_2\right)^{1/(qb)}\left(2\sqrt{m_1m_2}+m_1+4(\sqrt{m_1}+\sqrt{m_2})\sqrt{qb}+2qb\right)\right\}^b\\
\leq & \left\{C(p_1\wedge p_2)^{1/(qb)}\left(\sigma_R\sigma_C+\sigma_C^2+(\sigma_R+\sigma_C)\sqrt{qb} + qb\right)\right\}^b.
\end{split}
\end{equation*}
We set $q = 2 \lceil\log(p_1\wedge p_2)/b\rceil$ and consider the following two cases:
\begin{enumerate}
	\item If $ b\geq \log(p_1\wedge p_2)$, we have $q = 2$ and
	\begin{equation*}
	\begin{split}
	\mathbb{E}\left\|ZZ^\top -\mathbb{E}ZZ^\top\right\|^b \leq & \left\{C(p_1\wedge p_2)^{1/(2b)}\left(\sigma_R\sigma_C+\sigma_C^2+(\sigma_R+\sigma_C)\sqrt{b} + b\right)\right\}^b\\
	\leq & \left\{C\left((\sigma_C+\sigma_R+\sqrt{b})^2-\sigma_R^2\right)\right\}^b.
	\end{split}
	\end{equation*}
	\item If $b < \log(p_1\wedge p_2)$, we have 
	$$2\log(p_1\wedge p_2)/b\leq q = 2\lceil\log(p_1\wedge p_2)/b\rceil \leq 2\left(\log(p_1\wedge p_2)/b +1\right) \leq 4\log(p_1\wedge p_2)/b.$$
	Then,
	\begin{equation*}
	\begin{split}
	& \mathbb{E}\left\|ZZ^\top -\mathbb{E}ZZ^\top\right\|^b\\ 
	\leq & \left\{C(p_1\wedge p_2)^{1/(2\log(p_1\wedge p_2))}\left(\sigma_R\sigma_C+\sigma_C^2+(\sigma_R+\sigma_C)\sqrt{4\log(p_1\wedge p_2)} + 4\log(p_1\wedge p_2)\right)\right\}^b\\
	\leq & \left\{C\left((\sigma_C+\sigma_R+\sqrt{\log(p_1\wedge p_2)})^2-\sigma_C^2\right)\right\}^b.
	\end{split}
	\end{equation*}
\end{enumerate}
In summary, there exists a uniform constant $C_0>0$ such that
\begin{equation*}
\mathbb{E}\left\|ZZ^\top - \mathbb{E}ZZ^\top\right\|^b \leq \left\{C_0\left((\sigma_C+\sigma_R+\sqrt{b\vee \log(p_1\wedge p_2)})^2-\sigma_C^2\right)\right\}^b.
\end{equation*}
In fact, the statement holds for all $b>0$ including non-integers.

Next we consider the tail bound inequality for $\|ZZ^\top - \mathbb{E}ZZ^\top\|$. Let $C_1$ be a to-be-specified constant. By Markov inequality,
\begin{equation*}
\begin{split}
& \bbP\left(\left\|ZZ^\top - \mathbb{E}ZZ^\top\right\| \geq C_1\left((\sigma_C+\sigma_R+\sqrt{\log(p_1\wedge p_2)} + x)^2-\sigma_C^2\right)\right) \\
\leq & \frac{\mathbb{E}\left\|ZZ^\top - \mathbb{E}ZZ^\top\right\|^b}{\left\{C_1\left((\sigma_C+\sigma_R+\sqrt{\log(p_1\wedge p_2)} + x)^2-\sigma_C^2\right)\right\}^b}\\
\leq & \left\{\frac{C_0\left((\sigma_C+\sigma_R+\sqrt{b\vee \log(p_1\wedge p_2)})^2-\sigma_C^2\right)}{C_1\left((\sigma_C+\sigma_R+\sqrt{\log(p_1\wedge p_2)} + x)^2-\sigma_C^2\right)}\right\}^b.
\end{split}
\end{equation*}
We set $b = x^2$, $C_1 = eC_0$, we have
\begin{equation*}
\begin{split}
& \bbP\left(\left\|ZZ^\top - \mathbb{E}ZZ^\top\right\| \geq C_1\left((\sigma_C+\sigma_R+\sqrt{\log(p_1\wedge p_2)} + x)^2-\sigma_C^2\right)\right) \\
\leq & \left\{\frac{C_0\left((\sigma_C+\sigma_R+\sqrt{\log(p_1\wedge p_2)} + \sqrt{b})^2-\sigma_C^2\right)}{C_1\left((\sigma_C+\sigma_R+\sqrt{\log(p_1\wedge p_2)} + x)^2-\sigma_C^2\right)}\right\}^b = \exp(-x^2).
\end{split}
\end{equation*}
Therefore, we have finished the proof of this theorem.
\end{proof}

\subsection{Proofs for Section \ref{sec:homoskedastic rows}}\label{sec:proof-homo-row}

\begin{proof}[Proof of Lemma \ref{lm:ZZtop-tildeZZtop}]
The proof of this lemma relies on a more careful counting scheme for each cycle. For convenience, we define 
$$\tilde{\sigma}_{ij}^2 = \Var(\tilde{G}_{ij}) = \left\{\begin{array}{ll}
\sigma_{ij}^2, & 1\leq i \leq p_1-2, 1\leq j \leq p_2;\\
\sigma_{p_1-1,j}^2+\sigma_{p_1,j}^2, & i = p-1, 1\leq j \leq p_2.
\end{array}\right.$$
$$(G_0)_{ij} = G_{ij}/\sigma_{ij}, 1\leq i \leq p_2, 1\leq j \leq p_1; \quad (\tilde{G}_0)_{ij} = \tilde{G}_{ij}/\tilde{\sigma}_{ij}, 1\leq i \leq p_1-1, 1\leq j \leq p_2$$ 
as the variances and standardizations of each entry of $G$ and $\tilde{G}$. Since the proof is lengthy, we divide into steps for a better presentation.

\begin{itemize}
	\item[Step 1] In this step, we consider the expansions for both $\mathbb{E}\tr(GG^\top - \mathbb{E}GG^\top)^q$ and $\mathbb{E}\tr(\tilde{G}\tilde{G}^\top - \mathbb{E}\tilde{G}\tilde{G}^\top)^q$,
	\begin{equation}
	\begin{split}
	& \mathbb{E}\tr\left\{(GG^\top - \mathbb{E}GG^\top)^{q}\right\} =  \mathbb{E}\sum_{\substack{u_1, \ldots, u_{q} \in [p_1]}}  \prod_{k=1}^q\left(GG^\top - \mathbb{E}GG^\top\right)_{u_k, u_{k+1}} \\
	= & \sum_{\substack{u_1, \ldots, u_{q} \in [p_1]\\v_1,\ldots, v_q \in [p_2]}} \mathbb{E}\prod_{k=1}^q \sigma_{u_k, v_k}\sigma_{u_{k+1}, v_{k}}\left((G_0)_{u_k, v_k} (G_0)_{u_{k+1}, v_k} - 1_{\{u_k = u_{k+1}\}}\right)\\
	= & \sum_{\Omega\subseteq [q]} \sum_{u_{\Omega^c} \in [p_1-2]} \sum_{v_1,\ldots, v_q\in [p_2]} \\
	& \qquad \qquad \cdot \left\{\sum_{u_{\Omega} \in \{p_1-1, p_1\}} \mathbb{E}\prod_{k=1}^q \sigma_{u_k, v_k} \sigma_{u_{k+1}, v_k}\left((G_0)_{u_k, v_k} (G_0)_{u_{k+1}, v_k} - 1_{\{u_k = u_{k+1}\}}\right)\right\}.
	\end{split}
	\end{equation}
	Here $u_{q+1}:=u_1$. Similarly,
	\begin{equation*}
	\begin{split}
	& \mathbb{E}\tr\left\{(\tilde{G}\tilde{G}^\top - \mathbb{E}\tilde{G}\tilde{G}^\top)^{q}\right\} =  \mathbb{E} \sum_{\substack{u_1, \ldots, u_{q} \in [p_1-1]}}  \prod_{k=1}^q\left(\tilde{G}\tilde{G}^\top - \mathbb{E}\tilde{G}\tilde{G}^\top\right)_{u_k, u_{k+1}} \\
	= & \sum_{\Omega\subseteq [q]} \sum_{u_{\Omega^c} \in [p_1-2]} \sum_{v_1,\ldots, v_q\in [p_2]} \\
	& \qquad \qquad \cdot \left\{\sum_{u_{\Omega} = p_1-1} \mathbb{E}\prod_{k=1}^q \tilde{\sigma}_{u_k, v_k} \tilde{\sigma}_{u_{k+1}, v_k}\left((\tilde{G}_0)_{u_k, v_k} (\tilde{G}_0)_{u_{k+1}, v_k} - 1_{\{u_k = u_{k+1}\}}\right)\right\}.
	\end{split}
	\end{equation*}
	Thus, in order prove this lemma, we only need show for any fixed $v_1,\ldots, v_q \in [p_2]$, $\Omega\subseteq [q], u_{\Omega^c} \in [p_1-2]$, one has
	\begin{equation}\label{ineq:ZZtop-tildeZZtop-1}
	\begin{split}
	& \sum_{u_{\Omega} \in \{p_1-1, p_2\}} \mathbb{E}\prod_{k=1}^q \sigma_{u_k, v_k}\sigma_{u_{k+1}, v_k}\left((G_0)_{u_k, v_k} (G_0)_{u_{k+1}, v_k} - 1_{\{u_k = u_{k+1}\}}\right) \\
	\leq & \mathbb{E}\prod_{k=1}^q \tilde{\sigma}_{\tilde{u}_k, v_k}\tilde{\sigma}_{\tilde{u}_{k+1}, v_k}\left((\tilde{G}_0)_{\tilde{u}_k, v_k} (\tilde{G}_0)_{\tilde{u}_{k+1}, v_k} - 1_{\{\tilde{u}_k = \tilde{u}_{k+1}\}}\right).
	\end{split}
	\end{equation}
	Here,
	\begin{equation}\label{eq:tilde-u-k}
	\tilde{u}_k  = u_k \in [p_1-2], \text{if } k\in \Omega^c; \quad \tilde{u}_k = p_1-1,  \text{if } k \in \Omega.
	\end{equation}
	\item[Step 2] To prove \eqref{ineq:ZZtop-tildeZZtop-1}, we shall first recall that the definition of $u_1,\ldots, u_q, v_1,\ldots, v_q$ are cyclic, i.e., $u_1 = u_{q+1}$, we also denote $v_0 = v_q$. Thus,
	\begin{equation}\label{eq:ZZtop-tildeZZtop-3}
	\begin{split}
	& \sum_{u_{\Omega} \in \{p_1-1, p_1\}} \prod_{k=1}^q\sigma_{u_k, v_k}\sigma_{u_{k+1}, v_k} = \sum_{u_{\Omega} \in \{p_1-1, p_1\}} \prod_{k=1}^q\sigma_{u_k, v_k}\sigma_{u_k, v_{k-1}}\\
	= & \prod_{k\in \Omega^c} \sigma_{u_k, v_k}\sigma_{u_k, v_{k-1}} \cdot  \left(\prod_{k \in \Omega}\sigma_{p_1-1, v_k}\sigma_{p_1-1, v_{k-1}} + \prod_{k \in \Omega}\sigma_{p_1, v_k}\sigma_{p_1, v_{k-1}}\right)\\
	\leq & \prod_{k\in \Omega^c} \sigma_{u_k, v_k}\sigma_{u_k, v_{k-1}} \cdot  \prod_{k \in \Omega}\left(\sigma_{p_1-1, v_k}\sigma_{p_1-1, v_{k-1}} +\sigma_{p_1, v_k}\sigma_{p_1, v_{k-1}}\right)\\
	\overset{\text{Cauchy-Schwarz}}{\leq} & \prod_{k \in \Omega^c} \tilde{\sigma}_{u_k, v_k}\tilde{\sigma}_{u_k, v_{k-1}} \cdot \prod_{k \in \Omega} \left((\sigma_{p_1-1,v_k}^2+\sigma_{p_1,v_k}^2)\cdot (\sigma_{p_1-1,v_{k-1}}^2+\sigma_{p_1,v_{k-1}}^2)\right)^{1/2} \\
	= &  \prod_{k \in \Omega^c}\tilde{\sigma}_{u_k, v_k}\tilde{\sigma}_{u_k, v_{k-1}} \cdot \prod_{k \in \Omega} \tilde{\sigma}_{p_1-1, v_k}\tilde{\sigma}_{p_1-1, v_{k-1}} \\
	= & \sum_{u_\Omega = p_1-1} \prod_{k=1}^q \tilde{\sigma}_{u_k, v_k}\tilde{\sigma}_{u_{k}, v_{k-1}} = \sum_{u_\Omega = p_1-1} \prod_{k=1}^q \tilde{\sigma}_{u_k, v_k}\tilde{\sigma}_{u_{k+1}, v_{k}}.
	\end{split}
	\end{equation}
	
	\item[Step 3] For any fixed $\Omega = \{k: u_k\in [p_1-2]\}$ and a cycle $\mathbf c = (u_1 \to v_1 \to u_2 \to v_2 \to \ldots \to u_q \to v_q \to u_1)$ such that $u_{\Omega} \in \{p_1-1,p\}$ and $u_\Omega \in [p_1-2]$, recall $\tilde{u}_k$ is defined as \eqref{eq:tilde-u-k}. We aim to show in this step that
	\begin{equation}\label{ineq:ZZtop-tildeZZtop-2}
	\mathbb{E}\prod_{k=1}^q \left((G_0)_{u_k, v_k} (G_0)_{u_{k+1}, v_k} - 1_{\{u_k = u_{k+1}\}}\right) \leq \mathbb{E}\prod_{k=1}^q \left((\tilde{G}_0)_{\tilde{u}_k, v_k} (\tilde{G}_0)_{\tilde{u}_{k+1}, v_k} - 1_{\{\tilde{u}_k = \tilde{u}_{k+1}\}}\right).
	\end{equation}
	We can rearrange the left hand side and the right hand side of \eqref{ineq:ZZtop-tildeZZtop-2} to
	\begin{equation*}
	\begin{split}
	\mathbb{E}\prod_{i=1}^{p_1} \prod_{j=1}^{p_2} (G_0)_{ij}^{\alpha_{ij}} ((G_0)_{ij}^2-1)^{\beta_{ij}}, \quad \text{and} \quad \mathbb{E}\prod_{i=1}^{p_1-1} \prod_{j=1}^{p_2} (\tilde{G}_0)_{ij}^{\tilde{\alpha}_{ij}} ((\tilde{G}_0)_{ij}^2-1)^{\tilde{\beta}_{ij}}.
	\end{split}
	\end{equation*}
	Here, $\alpha_{ij}, \beta_{ij}, \tilde{\alpha}_{ij},$ and $\tilde{\beta}_{ij}$ are defined as
	\begin{equation*}
	\begin{split}
	& \alpha_{ij} = \left| \left\{k: (u_k = i, v_k = j, u_{k+1} \neq i) \text{ or } (u_k \neq i, v_k = j, u_{k+1} = i)\right\}\right|,\\
	& \beta_{ij} = \left| \left\{k: u_k = u_{k+1} = i, v_k = j\right\}\right|,\\
	& \tilde{\alpha}_{ij} = \left| \left\{k: (\tilde{u}_k = i, v_k = j, \tilde{u}_{k+1} \neq i) \text{ or } (\tilde{u}_k \neq i, v_k = j, \tilde{u}_{k+1} = i)\right\}\right|\\
	& \tilde{\beta}_{ij} = \left| \left\{k: \tilde{u}_k = \tilde{u}_{k+1} = i, v_k = j\right\}\right|.
	\end{split}
	\end{equation*}
	Then, $\alpha_{ij}$ (or $\tilde{\alpha}_{ij}$) is the number of times that the edge $(i, j)$ is visited exactly once by sub-path $u_k\to v_k \to u_{k+1}$ (or $\tilde{u}_k\to v_k \to \tilde{u}_{k+1}$); $\beta_{ij}$ (or $\tilde{\beta}_{ij}$) is the number of times that the edge $(i, j)$ is visited twice (back and forth) by sub-path $u_k\to v_k\to u_{k+1}$ (or $\tilde{u}_k\to v_k\to \tilde{u}_{k+1}$). 
	
	Here, by comparing the order of $(\tilde{G}_0)_{ij}$ and $(G_0)_{ij}$ in these two monomials \eqref{ineq:ZZtop-tildeZZtop-2}, $\tilde{\alpha}_{ij}, \tilde{\beta}_{ij}, \alpha_{ij}, \beta_{ij}$ are related as
	\begin{equation*}
	\begin{split}
	& \tilde{\alpha}_{ij} = \alpha_{ij}, \quad \tilde{\beta}_{ij} = \beta_{ij}, \quad \text{if } 1\leq i \leq p_1-2, 1\leq j \leq n,\\
	\end{split}
	\end{equation*}
	The relationship among $\tilde{\alpha}_{p_1-1,j}, \tilde{\beta}_{p_1-1,j}, \alpha_{p_1-1,j}, \alpha_{p_1, j}, \beta_{p_1-1,j}, \beta_{p_1, j}$ is more involved. To analyze them, for any fixed $1\leq j \leq p_2$ we define
	\begin{equation*}
	\begin{split}
	& x_1^{(j)} = \left|\left\{k: (u_k \to v_k \to u_{k+1}) = ((p_1-1)\to j\to \{p_1-1, p_1\}^c) \text{ or } (\{p_1-1, p_1\}^c\to j\to (p_1-1)) \right\}\right|,\\
	& x_2^{(j)} = \left|\left\{k: (u_k \to v_k \to u_{k+1}) = (p_1\to j\to \{p_1-1, p_1\}^c) \text{ or } (\{p_1-1, p_1\}^c\to j\to p_1) \right\}\right|,\\
	& x_3^{(j)} = \left|\left\{k: (u_k \to v_k \to u_{k+1}) = ((p_1-1)\to j\to (p_1-1)) \right\}\right|,\\
	& x_4^{(j)} = \left|\left\{k: (u_k \to v_k \to u_{k+1}) = (p_1\to j\to p_1) \right\}\right|,\\
	& x_5^{(j)} = \left|\left\{k: (u_k \to v_k \to u_{k+1}) = ((p_1-1)\to j\to p_1) \text{ or } (p_1\to j\to (p_1-1)) \right\}\right|.\\
	\end{split}
	\end{equation*}
	Then by definitions, we have
	\begin{equation*}
	\begin{split}
	& \alpha_{p_1-1, j} = x_1^{(j)} + x_5^{(j)}, \quad \alpha_{p_1, j} = x_2^{(j)}+x_5^{(j)}, \quad \tilde{\alpha}_{p_1-1,j} = x_1^{(j)}+x_2^{(j)};\\
	& \beta_{p_1-1, j} = x_3^{(j)}, \quad \beta_{p_1, j} = x_4^{(j)},\quad \tilde{\beta}_{p_1-1,j} = x_3^{(j)}+x_4^{(j)}+x_5^{(j)}.
	\end{split}
	\end{equation*}
	We introduce the following Lemma before we proceed.
	\begin{lemma}\label{lm:Gaussian-moment-hetero-PCA}
	Suppose $Z_1, Z_2$ are independent and symmetric distributed random variables. $\Var(Z_1) = \Var(Z_2) = 1$, $\|Z_1\|_{\psi_2}, \|Z_2\|_{\psi_2} \leq \kappa$. $G$ is standard Gaussian distributed. For any non-negative integers $x_1, \ldots, x_5$, we have
	\begin{equation}\label{ineq:Gaussian-moment-hetero-PCA}
		\begin{split}
			&\left|\mathbb{E}Z_1^{x_1+x_5}Z_2^{x_2+x_5}(Z_1^2-1)^{x_3} (Z_2^2 -1)^{x_4}\right| \\
			&\qquad\qquad\qquad\leq (C\kappa)^{x_1+x_2+2(x_3+x_4+x_5)}\mathbb{E} G^{x_1+x_2}(G^2 - 1)^{x_3+x_4+x_5}.
		\end{split}
	\end{equation}
	Especially when $Z_1, Z_2, G$ are all standard Gaussian,
	\begin{equation}\label{ineq:Gaussian-moment-hetero-PCA-gaussian}
	\left|\mathbb{E}Z_1^{x_1+x_5}Z_2^{x_2+x_5}(Z_1^2-1)^{x_3} (Z_2^2 -1)^{x_4}\right| \leq \mathbb{E} G^{x_1+x_2}(G^2 - 1)^{x_3+x_4+x_5}.
	\end{equation}	
	\end{lemma}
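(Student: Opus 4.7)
The plan is to reduce \eqref{ineq:Gaussian-moment-hetero-PCA} to its Gaussian specialization \eqref{ineq:Gaussian-moment-hetero-PCA-gaussian} and then establish the latter through a Wick/Isserlis combinatorial argument. First, the independence of $Z_1$ and $Z_2$ gives
\begin{equation*}
\mathbb{E}Z_1^{x_1+x_5}Z_2^{x_2+x_5}(Z_1^2-1)^{x_3}(Z_2^2-1)^{x_4} = \mathbb{E}\left[Z_1^{x_1+x_5}(Z_1^2-1)^{x_3}\right]\cdot \mathbb{E}\left[Z_2^{x_2+x_5}(Z_2^2-1)^{x_4}\right].
\end{equation*}
Applying the sub-Gaussian bound \eqref{ineq:E-G(G^2-1)sub-Gaussian} of Lemma \ref{lm:Gaussian-moments} to each of the two factors absorbs all the $\kappa$-dependence into the single prefactor $(C\kappa)^{x_1+x_2+2(x_3+x_4+x_5)}$ and leaves behind a product of two Gaussian moments; hence it suffices to prove \eqref{ineq:Gaussian-moment-hetero-PCA-gaussian}.

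For \eqref{ineq:Gaussian-moment-hetero-PCA-gaussian} I would first dispose of parity: if either $x_1+x_5$ or $x_2+x_5$ is odd, the corresponding factor on the LHS vanishes by symmetry of $G$, while the RHS is nonnegative by Lemma \ref{lm:Gaussian-moments}, so the inequality is trivial. In the remaining case both exponents are even. Combining the inclusion--exclusion expansion $(G^2-1)^\beta = \sum_{j=0}^\beta \binom{\beta}{j}(-1)^{\beta-j}G^{2j}$ with Isserlis' formula $\mathbb{E}G^{2m}=(2m-1)!!$ (the number of perfect matchings on $2m$ labeled slots), a standard inclusion--exclusion identifies $\mathbb{E}G^\alpha(G^2-1)^\beta$, for even $\alpha$, with the number of perfect matchings on $\alpha+2\beta$ labeled slots that avoid $\beta$ pre-designated disjoint ``forbidden'' pairs of slots.

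Under this combinatorial dictionary, the LHS of \eqref{ineq:Gaussian-moment-hetero-PCA-gaussian} counts ordered pairs $(M_1,M_2)$ of matchings on two disjoint slot-sets $A$ and $B$, of sizes $x_1+x_5+2x_3$ and $x_2+x_5+2x_4$, avoiding $x_3$ and $x_4$ forbidden pairs respectively. To align with the RHS, I would single out $x_5$ ``distinguished'' free slots in $A$ and $x_5$ in $B$, and declare $x_5$ new cross forbidden pairs by coupling them one-to-one in a fixed order; the enlarged configuration on $A\sqcup B$ then has $x_1+x_2$ free slots and $x_3+x_4+x_5$ forbidden pairs, matching the Wick interpretation of the RHS. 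The map $(M_1,M_2)\mapsto M_1\sqcup M_2$ injects the LHS set into the RHS set: the $x_3+x_4$ original forbidden pairs are avoided because $M_1,M_2$ avoid them, while each of the $x_5$ new cross forbidden pairs has its two legs in different components and thus cannot be matched by $M_1\sqcup M_2$. Since the RHS additionally counts matchings using $A$--$B$ edges, the inequality follows. The main obstacle is the bookkeeping needed to make this injection canonical---fixing once and for all which $x_5$ slots of $A$ (resp.\ $B$) are distinguished and in what order they couple---after which the argument reduces to a direct cardinality comparison.
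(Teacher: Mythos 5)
Your proof is correct and the overall skeleton (independence factorization, then applying the sub-Gaussian bound \eqref{ineq:E-G(G^2-1)sub-Gaussian} to each factor, then parity reduction) mirrors the paper's, but your treatment of the remaining Gaussian inequality \eqref{ineq:Gaussian-moment-hetero-PCA-gaussian} is genuinely different and, I think, cleaner. The paper proves the Gaussian case by chaining algebraic double-factorial estimates: it bounds each factor by $(\alpha+2\beta-1)!!$ via the upper half of Lemma~\ref{lm:Gaussian-moments}, applies the elementary inequality $x!!\,y!!\le(x+y-1)!!$ for odd $x,y$, and then invokes the lower bound $\mathbb{E}G^\alpha(G^2-1)^\beta\ge(\alpha+2\beta-3)!!\,(\alpha+\beta-1)$ from the same lemma (this last step silently uses $x_1+x_2+x_3+x_4+x_5\ge 2$, which does hold in the nontrivial regime). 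Your route replaces all of this with the Wick/Isserlis dictionary: $\mathbb{E}G^\alpha(G^2-1)^\beta$, for even $\alpha$, is the number of perfect matchings on $\alpha+2\beta$ slots avoiding $\beta$ prescribed disjoint pairs, and the inequality becomes the injectivity of $(M_1,M_2)\mapsto M_1\sqcup M_2$, which automatically avoids the $x_5$ cross forbidden pairs because its edges never cross between the two components. The bookkeeping you flag as an obstacle is in fact benign: any fixed choice of $x_5$ distinguished free slots in each of $A,B$ and any fixed coupling order gives the same count by symmetry, so the comparison is immediate. What your approach buys is that it never needs the (somewhat delicate) lower bound on $\mathbb{E}G^\alpha(G^2-1)^\beta$ nor the double-factorial product inequality; it proves \eqref{ineq:Gaussian-moment-hetero-PCA-gaussian} directly as a counting statement and incidentally gives a combinatorial interpretation of $\mathbb{E}G^\alpha(G^2-1)^\beta$ that is not made explicit anywhere in the paper. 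The trade-off is that the paper's analytic argument is self-contained given Lemma~\ref{lm:Gaussian-moments}, while yours requires establishing the matching interpretation first (a short inclusion--exclusion, but an extra step). Both are valid; yours is arguably the more transparent proof of the Gaussian comparison.
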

	\begin{proof}
		See \hyperref[sec:proof-technical-lemma]{Appendix}.
	\end{proof}
	By Lemma \ref{lm:Gaussian-moment-hetero-PCA}, 
	\begin{equation*}
	\begin{split}
	& \left|\mathbb{E}(G_0)_{p_1-1,j}^{\alpha_{p_1-1,j}}((G_0)_{p_1-1,j}^2 - 1)^{\beta_{p_1-1,j}}\right| \cdot \left|\mathbb{E}(G_0)_{p_1,j}^{\alpha_{p_1,j}}((G_0)_{p_1,j}^2 - 1)^{\beta_{p_1,j}}\right| \\
	= & \left|\mathbb{E}(G_0)_{p_1-1,j}^{x_1^{(j)}+x_5^{(j)}}((G_0)_{p_1-1,j}^2-1)^{x_3^{(j)}}\right| \cdot \left|\mathbb{E}(G_0)_{p_1,j}^{x_2^{(j)}+x_5^{(j)}}((G_0)_{p_1,j}^2-1)^{x_4^{(j)}}\right|\\
	\leq & \mathbb{E}(\tilde{G}_0)_{p_1-1,j}^{x_1^{(j)}+x_2^{(j)}}((\tilde{G}_0)^2_{p_1-1,j}-1)^{x_3^{(j)}+x_4^{(j)}+x_5^{(j)}} = \mathbb{E}(\tilde{G}_0)_{p_1-1,j}^{\tilde{\alpha}_{p_1-1,j}}((\tilde{G}_0)^2_{p_1-1,j}-1)^{\tilde{\beta}_{p_1-1,j}}.
	\end{split}
	\end{equation*}
	Thus,
	\begin{equation}\label{ineq:ZZtop-tildeZZtop-4}
	\mathbb{E}\prod_{i=1}^{p_1} \prod_{j=1}^{p_2} (G_0)_{ij}^{\alpha_{ij}} ((G_0)_{ij}^2-1)^{\beta_{ij}} \leq \mathbb{E}\prod_{i=1}^{p_1-1} \prod_{j=1}^{p_2} (\tilde{G}_0)_{ij}^{\tilde{\alpha}_{ij}} ((\tilde{G}_0)_{ij}^2-1)^{\tilde{\beta}_{ij}}. 
	\end{equation}
	This gives \eqref{ineq:ZZtop-tildeZZtop-2}. 
	
	\item[Step 4] Combining \eqref{eq:ZZtop-tildeZZtop-3} and \eqref{ineq:ZZtop-tildeZZtop-2}, we finally have
	\begin{equation*}
	\begin{split}
	& \sum_{u_{\Omega} \in \{p_1-1, p_1\}} \mathbb{E}\prod_{k=1}^q \sigma_{u_k, v_k}\sigma_{u_{k+1}, v_k}\left((G_0)_{u_k, v_k} (G_0)_{u_{k+1}, v_k} - 1_{\{u_k = u_{k+1}\}}\right) \\
	= & \sum_{u_{\Omega} \in \{p_1-1, p_1\}} \prod_{k=1}^q \sigma_{u_k, v_k} \sigma_{u_{k+1}, v_k} \cdot \mathbb{E}\prod_{k=1}^q \left((G_0)_{u_k, v_k} (G_0)_{u_{k+1}, v_k} - 1_{\{u_k = u_{k+1}\}}\right)\\
	\overset{\eqref{ineq:ZZtop-tildeZZtop-2}}{\leq} & \sum_{u_{\Omega} \in \{p_1-1, p_1\}} \prod_{k=1}^q  \sigma_{u_k, v_k} \sigma_{u_{k+1}, v_k} \cdot  \mathbb{E}\prod_{k=1}^q \left((\tilde{G}_0)_{\tilde{u}_k, v_k} (\tilde{G}_0)_{\tilde{u}_{k+1}, v_k} - 1_{\{\tilde{u}_k = \tilde{u}_{k+1}\}}\right)\\
	\overset{\eqref{eq:ZZtop-tildeZZtop-3}}{\leq} & \mathbb{E}\prod_{k=1}^q \tilde{\sigma}_{\tilde{u}_k, v_k} \tilde{\sigma}_{\tilde{u}_{k+1}, v_k}\left((\tilde{G}_0)_{\tilde{u}_k, v_k} (\tilde{G}_0)_{\tilde{u}_{k+1}, v_k} - 1_{\{\tilde{u}_k = \tilde{u}_{k+1}\}}\right),
	\end{split}
	\end{equation*}
	which yields \eqref{ineq:ZZtop-tildeZZtop-1} and additionally finishes the proof of this lemma.  \quad $\square$
\end{itemize}
\end{proof}

\begin{proof}[Proof of Theorem \ref{th:row-wise-concentration-lower}]
Denote $\sigma_C^2 = \sum_i \sigma_i^2, \sigma_{\ast}=\max_i\sigma_i$, $Z = [Z_1, \ldots, Z_{p_2}]$, and $S_k = Z_kZ_k^\top - \mathbb{E}Z_kZ_k^\top$. Then
\begin{equation*}
\mathbb{E}\left\|ZZ^\top - \mathbb{E}ZZ^\top \right\| = \mathbb{E}\left\|\sum_{k=1}^{p_2} S_k\right\|.
\end{equation*}
By the lower bound for expected norm of independent random matrices sum \cite[Theorem I and Section 1.3]{tropp2016expected},
\begin{equation}\label{ineq:Tropp-lower-bound}
\mathbb{E}\|ZZ^\top - \mathbb{E}ZZ^\top\| \gtrsim \left(\left\|\mathbb{E}\sum_{k=1}^{p_2} S_kS_k^\top\right\|\right)^{1/2} +  \mathbb{E}\max_k\|S_k\|.
\end{equation}
If $Z_{ij}\sim N(0,\sigma_i^2)$ for any $i\in [p_1],  j \in [p_2]$. Note that 
\begin{equation*}
\begin{split}
& \left(\mathbb{E}Z_kZ_k^\top Z_k Z_k^\top\right)_{ij} = \mathbb{E} Z_{ik}\sum_{l=1}^{p_1} Z_{lk}^2 Z_{jk} = \left\{\begin{array}{ll}
3\sigma_i^4 + \sigma_i^2\left(\sum_{l\neq i} \sigma_l^2\right), & i=j;\\
0, & i\neq j,
\end{array}\right. = \diag\left(\{2\sigma_i^4+\sigma_i^2\sigma_C^2\}_{i=1}^{p_1}\right) \\
& \left(\mathbb{E}Z_kZ_k^\top\right)^2 = \diag(\sigma_1^4,\ldots, \sigma_{p_1}^4).
\end{split}
\end{equation*}
Thus,
\begin{equation*}
\begin{split}
\left\|\mathbb{E}\sum_{k=1}^{p_2} S_kS_k^\top\right\| = & \left\|\sum_{k=1}^{p_2}\mathbb{E}(Z_kZ_k^\top - \mathbb{E}Z_kZ_k^\top)(Z_kZ_k^\top - \mathbb{E}Z_kZ_k^\top)\right\| \\
= & \left\|\sum_{k=1}^{p_2}\mathbb{E}Z_kZ_k^\top Z_kZ_k^\top - (\mathbb{E}Z_kZ_k^\top)^2\right\|\\
= &  \left\|\diag\left(\{\sigma_i^2\sigma_C^2 + \sigma_i^4\}_{i=1}^{p_1}\right)\right\| = \sigma_\ast^4 + \sigma_{\ast}^2 \sigma_C^2.
\end{split}
\end{equation*}
Meanwhile, let $i^\ast \in [p]$ such that suppose $\sigma_{\ast} = \sigma_{i^\ast}$, then
$$\mathbb{E}\|S_k\| = \mathbb{E}\left\|Z_kZ_k^\top - \mathbb{E}Z_kZ_k^\top\right\| \geq \mathbb{E}\left\|Z_kZ_k^\top\right\| - \left\|\mathbb{E}Z_kZ_k^\top\right\| = \sigma_C^2 - \sigma_{\ast}^2;$$
$$\mathbb{E}\|S_k\| \geq \mathbb{E}\|(S_k)_{i^\ast i^\ast}\| = \mathbb{E}\left|Z_{ i^\ast k}^2 - \mathbb{E}Z_{i^\ast k}^2\right|  \geq c\sigma_{\ast}^2.$$
Combining the previous two inequalities, we have $\mathbb{E}\|S_k\| \geq c\sigma_C^2$. Consequently,
\begin{equation*}
	\mathbb{E}\left\|ZZ^\top - \mathbb{E}ZZ^\top \right\| \overset{\eqref{ineq:Tropp-lower-bound}}{\gtrsim} \sigma_C^2 + \sqrt{p_2}\sigma_{\ast}\sigma_C. \quad \square
\end{equation*}
\end{proof}
\subsection{Proofs for Section \ref{sec:homoskedastic columns}}\label{sec:proof-homo-column}

\begin{proof}[Proof of Lemma \ref{lm:diagonal-deletion-comparison}] Since the diagonal of $\Delta(ZZ^\top)$ is zero, we have the following expansion,
\begin{equation}\label{eq:home-row-expansion}
\begin{split}
& \bbE\tr\left\{\left(\Delta(ZZ^\top)\right)^q\right\} = \sum_{u_1,\ldots,u_1 \in [p_1]} \bbE \prod_{k=1}^q\left(\Delta(ZZ^\top)\right)_{u_k,u_{k+1}} \\
= &  \sum_{\substack{u_1,\ldots,u_1 \in [p_1] \\ v_1 ,\ldots v_q \in [p_2]}} \bbE\prod_{k=1}^q \left(1_{\{u_k \neq u_{k+1}\}}Z_{u_k,v_k}Z_{u_{k+1},v_k}\right).
\end{split}
\end{equation}
Again, the indices on $u$ are in module $q$, i.e., $u_1 = u_{q+1}$. For a cycle $\mathbf c := (u_1 \to v_1 \to u_2 \to v_2 \to \ldots \to u_q \to v_q \to u_1)$, recall the definition of $\alpha_{ij}(\mathbf c)$:
\begin{equation*}
\alpha_{ij}(\mathbf c) = \text{Card} \left\{k: (u_k = i, v_k = j, u_{k+1} \neq i) \text{ or } (u_k \neq i, v_k = j, u_{k+1} = i)\right\}
\end{equation*}
for any $i \in [p_1]$ and $j \in [p_2]$, which counts how many times edge $i \to j$ or $j \to i$ are visited. Now the expansion in \eqref{eq:home-row-expansion} can be further written as
\begin{equation}\label{eq:home-row-expansion2}
\begin{split}
& \bbE\tr\left\{\left(\Delta(ZZ^\top)\right)^q\right\}  = \sum_{\mathbf c \in ([p_1]\times [p_2])^q} \left(\prod_{k=1}^q 1_{\{u_k \neq u_{k+1}\}}\right) \cdot \left(\prod_{(i,j) \in [p_1]\times [p_2]}\bbE Z_{ij}^{{\alpha_{ij}(\mathbf c)}}\right)  \\
= & \sum_{\mathbf c \in ([p_1]\times [p_2])^q} \left(\prod_{k=1}^q 1_{\{u_k \neq u_{k+1}\}}\sigma_{u_k,v_k}\sigma_{u_{k+1},v_k}\right) \cdot \left(\prod_{(i,j) \in [p_1]\times [p_2]}\bbE G^{{\alpha_{ij}(\mathbf c)}}\right)\\
= & \sum_{\mathbf c \in ([p_1]\times [p_2])^q} \left(\prod_{k=1}^q 1_{\{u_k \neq u_{k+1}\}}\sigma_{v_k}^2\right) \cdot \left(\prod_{(i,j) \in [p_1]\times [p_2]}\bbE G^{{\alpha_{ij}(\mathbf c)}}\right).
\end{split}
\end{equation}
We define $m_\alpha(\mathbf c)$ be the number of edges which appear $\alpha$ times in the cycle $\mathbf c$:
\begin{equation*}
m_{\alpha}(\mathbf{c}) = \text{Card}\left\{(i, j)\in [p_1]\times[p_2]: |\{k: u_k \text{ or } u_{k+1} = i, v_k = j,\}| = \alpha \right\}
\end{equation*} 
Let $\mathbf s(\mathbf c)$ be the shape of $\mathbf c$, we have
\begin{equation*}
\prod_{(i,j) \in [p_1]\times [p_2]}\bbE G^{{\alpha_{ij}(\mathbf c)}} = \prod_{\alpha \geq 0} \bbE G^{m_\alpha(\mathbf s(\mathbf c))},
\end{equation*}
where $G\sim N(0,1)$. Next we define the following shape family:
\begin{equation*}
\mathcal S_{p_1,p_2} := \left\{\mathbf s(\mathbf c): m_{\alpha}'(\mathbf c) = 0 \text{ for all odd } \alpha; \text{ and } u_k \neq u_{k+1} \text{ for all }k=1,\ldots,q\right\}.
\end{equation*}
Based on the notations above, one can check the expansion in \eqref{eq:home-row-expansion2} can be further simplified to
\begin{equation}\label{eq:home-row-expansion3}
\begin{split}
\bbE\tr\left\{\left(\Delta(ZZ^\top)\right)^q\right\} & = \sum_{\mathbf s_0 \in \mathcal S_{p_1,p_2}}\sum_{\mathbf c: \mathbf s(\mathbf c) = \mathbf s_0} \left(\prod_{k=1}^q \sigma_{v_k}^2\right) \prod_{\alpha \geq 0}\bbE G^{m_\alpha(\mathbf s_0)} \\
& = \sum_{\mathbf s_0 \in \mathcal S_{p_1,p_2}}\prod_{\alpha \geq 0}\bbE G^{m_\alpha(\mathbf s_0)}\sum_{\mathbf c: \mathbf s(\mathbf c) = \mathbf s_0} \left(\prod_{k=1}^q \sigma_{v_k}^2\right).
\end{split}
\end{equation}
For a fixed shape $\mathbf s_0 \in \mathcal S_{p_1,p_2}$, let $m_L(\mathbf s_0)$ ($m_R(\mathbf s_0)$) be the number of distinct left (right) vertexes visited by cycles with shape $\mathbf s_0$. Now we bound $\sum_{\mathbf c: \mathbf s(\mathbf c) = \mathbf s_0} \left(\prod_{k=1}^q \sigma_{v_k}^2\right)$ via $m_L(\mathbf s_0)$ and $m_R(\mathbf s_0)$. To this end, we first present three facts for any cycles with shape $\mathbf s_0$:
\begin{itemize}
	\item Each visited edges must appear at least twice in the cycles; 
	\item For each right vertex in the cycle, its predecessor and successor in left vertex set must be different;
	\item The cycle is uniquely defined by specifying $m_L(\mathbf s_0)$ left vertexes and $m_R(\mathbf s_0)$ right vertexes; moreover, the summation term is free of the index of the left visited vertexes.
\end{itemize}
These three observations, together with the assumption $\sigma_* = 1$, yield the following bound:
\begin{equation}\label{ineq:home-row-trace-bound1}
\begin{split}
\sum_{\mathbf c: \mathbf s(\mathbf c) = \mathbf s_0} \left(\prod_{k=1}^q \sigma_{v_k}^2\right) \leq p_1(p_1-1)\cdots (p_1-m_L(\mathbf s_0)+1) \left(\sum_{j=1}^n \sigma_j^4\right)^{m_R(\mathbf s_0)}.
\end{split}
\end{equation}
Next we make comparison between $\bbE\tr\left\{\left(\Delta(ZZ^\top)\right)^q\right\}$ and $\bbE\tr\left\{\left(\Delta(HH^\top)\right)^q\right\}$, where $H$ is a $p_1$-by-$m$ random matrix with i.i.d. standard Gaussian entries. Similar, as above, we have
\begin{equation*}
\begin{split}
\bbE\tr\left\{\left(\Delta(HH^\top)\right)^q\right\} & = \sum_{\mathbf s_0 \in \mathcal S_{p_1,p_2}}\prod_{\alpha \geq 0}\bbE G^{m_\alpha(\mathbf s_0)}\sum_{\mathbf c: \mathbf s(\mathbf c) = \mathbf s_0} \left|\left\{\mathbf c: \mathbf s(\mathbf c) = \mathbf s_0\right\}\right|.
\end{split}
\end{equation*}
Setting $m = \lceil \sum_{j=1}^{p_2} \sigma_j^4\rceil + q -1 $, we have
\begin{equation}\label{ineq:home-row-trace-bound2}
\begin{split}
\left|\left\{\mathbf c: \mathbf s(\mathbf c) = \mathbf s_0\right\}\right| & = p_1(p_1-1)\cdots(p_1-m_L(\mathbf s_0)+1)m(m-1)\cdots(m-m_R(\mathbf s_0)+1) \\
& \geq p_1(p_1-1)\cdots(p_1-m_L(\mathbf s_0)+1)(m-m_R(\mathbf s_0)+1)^{m_R(\mathbf s_0)}\\
& \geq  p_1(p_1-1)\cdots (p_1-m_L(\mathbf s_0)+1) \left(\sum_{j=1}^n \sigma_j^4\right)^{m_R(\mathbf s_0)}.
\end{split} 
\end{equation}
Combining \eqref{ineq:home-row-trace-bound1} and \eqref{ineq:home-row-trace-bound2}, we finish the proof.
\end{proof}

\begin{proof}[Proof of Theorem \ref{th:column-wise-concentration-lower}]
Denote $\sigma_R^2 = \sum_j \sigma_j^2, \sigma_{\ast}=\max_i\sigma_i$. We use the general lower bound for expected norm of independent random matrices sum \cite[Theorem I and Section 1.3]{tropp2016expected} as we did in the proof of Theorem \ref{th:row-wise-concentration-lower}. Since $Z_{ij} \sim N(0,\sigma_j^2)$, for any $k \in [p_2]$,
\begin{equation*}
\begin{split}
& \left(\mathbb{E}Z_kZ_k^\top Z_k Z_k^\top\right)_{ij} = \mathbb{E} Z_{ik}\sum_{l=1}^{p_1} Z_{lk}^2 Z_{jk} = \left\{\begin{array}{ll}
3\sigma_k^4 + (p_1-1)\sigma_k^4, & i=j;\\
0, & i\neq j,
\end{array}\right. = \diag\left(\{(p_1+2)\sigma_k^2\}_{i=1}^{p_1}\right) \\
& \left(\mathbb{E}Z_kZ_k^\top\right)^2 = \sigma_k^4I_{p_1}.
\end{split}
\end{equation*}
Thus,
\begin{equation*}
\begin{split}
\left\|\mathbb{E}\sum_{k=1}^{p_2} S_kS_k^\top\right\| = &  \left\|\sum_{k=1}^{p_2}\mathbb{E}Z_kZ_k^\top Z_kZ_k^\top - (\mathbb{E}Z_kZ_k^\top)^2\right\|=  \left\|(p_1+1)\left(\sum_{k=1}^{p_2}\sigma_k^4\right) I_{p_1}\right\| \geq p_1\sum_{k=1}^{p_2}\sigma_k^4.
\end{split}
\end{equation*}
On the other hand, 
$$\mathbb{E}\max_k\|S_k\| \geq \max_k\mathbb{E}\left\|S_k\right\| \geq \max_k\left\{\bbE\left\|Z_kZ_k^\top\right\| - \left\|\mathbb{E}Z_kZ_k^\top\right\|\right\} = (p_1-1)\sigma_*^2.$$
Combining the previous two inequalities and \eqref{ineq:Tropp-lower-bound} in the proof of Theorem \ref{th:row-wise-concentration-lower}, we obtain
\begin{equation*}
\mathbb{E}\left\|ZZ^\top - \mathbb{E}ZZ^\top \right\| \overset{\eqref{ineq:Tropp-lower-bound}}{\gtrsim} \sqrt{p_1\sum_{k=1}^{p_2} \sigma_k^2} + p_1\sigma_*^2. 
\end{equation*}	
\end{proof}

\subsection{Proofs for heteroskedastic clustering}\label{sec:proof-hetero-clustering}

\begin{proof}[Proof of Theorem \ref{th:hetero-clustering}]
We first introduce following three lemmas. 
\begin{lemma}\label{lm:hamming-loss}
For any $x \in \{-1, +1\}^n$ and $z \in \mathbb R$ with $\|z\|_2 = 1$ we have
\begin{equation*}
	d(x,sgn(z)) \leq n \left\|\frac{x}{\sqrt{n}}- z\right\|_2^2.
\end{equation*}
Here $d$ represents the Hamming distance: $d(x,z) = \sum_{i=1}^n 1_{\{x_i \neq y_i\}}$.
\end{lemma}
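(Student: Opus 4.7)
The plan is to prove the inequality coordinate by coordinate, using only the fact that each $x_i \in \{-1,+1\}$. The unit-norm assumption $\|z\|_2 = 1$ actually plays no role in the argument; it is presumably included because $z$ will appear later as a unit eigenvector of $YY^\top$ in the clustering application.

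First, I would fix an index $i \in [n]$ with $\operatorname{sgn}(z_i) \neq x_i$ and observe that the two cases $x_i = +1$ and $x_i = -1$ are symmetric. If $x_i = +1$ and $\operatorname{sgn}(z_i) \neq +1$, then $z_i \leq 0$, so
\begin{equation*}
\left(\frac{x_i}{\sqrt n} - z_i\right)^2 = \left(\frac{1}{\sqrt n} - z_i\right)^2 \geq \frac{1}{n},
\end{equation*}
and analogously if $x_i = -1$ and $\operatorname{sgn}(z_i) \neq -1$. Hence every mismatched coordinate contributes at least $1/n$ to the squared $\ell_2$ distance $\|x/\sqrt n - z\|_2^2$.

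Second, summing this pointwise lower bound over all mismatched indices yields
\begin{equation*}
d\bigl(x, \operatorname{sgn}(z)\bigr) = \sum_{i:\, x_i \neq \operatorname{sgn}(z_i)} 1 \leq n \sum_{i:\, x_i \neq \operatorname{sgn}(z_i)} \left(\frac{x_i}{\sqrt n} - z_i\right)^2 \leq n \left\|\frac{x}{\sqrt n} - z\right\|_2^2,
\end{equation*}
which is exactly the claimed inequality. There is no real obstacle here: the statement is a one-line pointwise estimate leveraging the discreteness of $x$, and no combinatorial or probabilistic input is needed.
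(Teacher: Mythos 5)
Your proof is correct. The paper does not supply its own argument for this lemma; it simply cites Lelarge--Miolane, so there is no in-paper proof to compare against, but the coordinate-wise estimate you give is the standard one and is sound. For each mismatched index $i$, the sign disagreement forces $x_i/\sqrt{n}$ and $z_i$ to lie on opposite sides of (or at) zero, so $(x_i/\sqrt{n}-z_i)^2 \geq 1/n$, and summing over mismatches gives the bound; the remaining coordinates only add nonnegative terms. Your observation that $\|z\|_2 = 1$ is never used is also accurate --- the hypothesis is carried along because $z$ is a unit eigenvector in the clustering application. One trivial remark: the statement as printed writes $z \in \mathbb{R}$, which is a typo for $z \in \mathbb{R}^n$, and your proof correctly treats $z$ as a vector.
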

\begin{proof}
See \cite{lelarge2015reconstruction}. \qquad $\square$
\end{proof}

\begin{lemma}\label{lm:subgaussian-linear-bound}
	Assume that $Z \in \bbR^{p_1 \times p_2}$ has independent sub-Gaussian entries, $\Var(Z_{ij})=\sigma_{ij}^2$, $\sigma_C^2 = \max_j \sum_i \sigma_{ij}^2$, $\sigma_R^2 = \max_i \sum_j \sigma_{ij}^2$, $\sigma_*^2 =\max_{i,j} \sigma_{ij}^2$.
	Assume that $\left\|Z_{ij}/\sigma_{ij}\right\|_{\psi_2} \leq \kappa.$ Let $V \in \mathbb O_{p_2,r}$ be a fixed orthogonal matrix. Then,
	\begin{equation*}
	\bbP\left(\left\|EV\right\| \geq 2(\sigma_C + x)\right) \leq 2\exp\left(5r-\min\left\{\frac{x^4}{\kappa^4 \sigma_*^2 \sigma_C^2},\frac{x^2}{\kappa^2\sigma_*^2}\right\}\right),
	\end{equation*}
	\begin{equation*}
	\bbE\left\|EV\right\| \lesssim \sigma_C + \kappa r^{1/4}(\sigma_*\sigma_C)^{1/2} + \kappa r^{1/2}\sigma_*.
	\end{equation*}
\end{lemma}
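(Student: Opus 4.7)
The plan is to combine a short $\varepsilon$-net argument on $S^{r-1}$ with a Bernstein bound for $\|Zw\|_2^2$ at each net point. First I will rewrite
$$
\|ZV\| = \sup_{v \in S^{r-1}} \|ZVv\|_2,
$$
and exploit the fact that, because $V$ has orthonormal columns, $w := Vv$ ranges over a unit sphere inside an $r$-dimensional subspace of $\bbR^{p_2}$. So it suffices to control $\|Zw\|_2$ at every point of a $(1/2)$-net in that subspace.

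For any fixed unit vector $w \in \bbR^{p_2}$, I will decompose $\|Zw\|_2^2 = \sum_{i=1}^{p_1} X_i$ with $X_i = (\langle Z_{i,\cdot},w\rangle)^2$, which are independent. Each linear combination $\langle Z_{i,\cdot},w\rangle$ is mean-zero sub-Gaussian with $\|\langle Z_{i,\cdot},w\rangle\|_{\psi_2}^2 \lesssim \kappa^2 \sum_j \sigma_{ij}^2 w_j^2$, so each $X_i$ is sub-exponential with $\|X_i\|_{\psi_1} \lesssim \kappa^2 \sum_j \sigma_{ij}^2 w_j^2$. Three routine bookkeeping bounds are then key: $\bbE\|Zw\|_2^2 = \sum_{i,j}\sigma_{ij}^2 w_j^2 \le \sigma_C^2$; the max $\|X_i\|_{\psi_1} \lesssim \kappa^2\sigma_*^2$; and
$$
\sum_i \|X_i\|_{\psi_1}^2 \lesssim \kappa^4 \sum_i\Bigl(\sum_j\sigma_{ij}^2 w_j^2\Bigr)^2 \le \kappa^4 \sigma_*^2\sum_{i,j}\sigma_{ij}^2 w_j^2 \le \kappa^4\sigma_*^2\sigma_C^2.
$$
Bernstein's inequality for sums of sub-exponentials then yields, after using $(\sigma_C+x)^2 \ge \sigma_C^2 + x^2$,
$$
\bbP\bigl(\|Zw\|_2 \ge \sigma_C + x\bigr) \le 2\exp\!\left(-c\min\!\Bigl\{\tfrac{x^4}{\kappa^4\sigma_*^2\sigma_C^2},\tfrac{x^2}{\kappa^2\sigma_*^2}\Bigr\}\right).
$$

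Next I will take a $(1/2)$-net $\mathcal N$ of $S^{r-1}$, of cardinality at most $5^r$. Since $V$ is an isometry, $\{Vv': v'\in\mathcal N\}$ is a $(1/2)$-net of $V(S^{r-1})$, and the standard discretization lemma gives $\|ZV\| \le 2\max_{v'\in\mathcal N} \|ZVv'\|_2$. A union bound over $\mathcal N$, combined with $2\cdot 5^r \le e^{5r}$, produces the stated tail bound. The expectation bound follows by tail integration: if I choose $x_0 \asymp \kappa r^{1/4}(\sigma_*\sigma_C)^{1/2} + \kappa r^{1/2}\sigma_*$, then for $x\ge x_0$ both candidate quantities in the minimum exceed $10r$, so $5r - c\min\{\cdot\} \lesssim -c'\min\{\cdot\}$ and the remaining integral $\int_{x_0}^\infty \bbP(\|ZV\|\ge 2(\sigma_C+x))\,dx$ is absorbed into $x_0$ by standard Gaussian and sub-exponential tail integrals.

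I expect the main obstacle to be verifying the Bernstein parameters sharply — specifically balancing the two quantities $\sum_i \|X_i\|_{\psi_1}^2 \le \kappa^4\sigma_*^2\sigma_C^2$ and $\max_i\|X_i\|_{\psi_1}\le \kappa^2\sigma_*^2$ in terms of the three scales $\sigma_C,\sigma_*,\kappa$, so that exactly the two summands $\kappa r^{1/4}(\sigma_*\sigma_C)^{1/2}$ and $\kappa r^{1/2}\sigma_*$ emerge from the two branches of the Bernstein minimum. The $\varepsilon$-net and tail integration steps are then essentially routine.
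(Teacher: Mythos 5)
Your proof is correct. The paper's proof of this lemma is a bare citation to \cite[Lemma 3]{zhang2018heteroskedastic}, so there is no in-text argument to compare against; your reconstruction via a $(1/2)$-net of $S^{r-1}$ (size $\le 5^r$, producing both the factor $2$ in front of $\sigma_C + x$ and the $5r$ in the exponent) together with Bernstein's inequality for $\|Zw\|_2^2 = \sum_i X_i$ as a sum of independent sub-exponentials, using the three bookkeeping bounds $\bbE\|Zw\|_2^2 \le \sigma_C^2$, $\sum_i\|X_i\|_{\psi_1}^2 \lesssim \kappa^4\sigma_*^2\sigma_C^2$, and $\max_i\|X_i\|_{\psi_1} \lesssim \kappa^2\sigma_*^2$, is the standard route and delivers both the two-regime tail and, after tail integration with $x_0 \asymp \kappa r^{1/4}(\sigma_*\sigma_C)^{1/2} + \kappa r^{1/2}\sigma_*$, the expectation bound. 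The only cosmetic discrepancy is that Bernstein's inequality yields an absolute constant $c$ inside the exponent that the stated lemma suppresses; this is absorbed by rescaling $x$ and does not affect either conclusion (nor the application in Theorem \ref{th:hetero-clustering}, which only uses the expectation bound up to constants).
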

\begin{proof}
See \cite[Lemma 3]{zhang2018heteroskedastic}.
\end{proof}

\begin{lemma}[Davis-Kahan]\label{lm:subspace-pertur}
	Let $A$ be an $n$-by-$n$ symmetric matrix with eigenvalues $|\lambda_1| \geq |\lambda_2| \geq \cdots $, with $|\lambda_k| - |\lambda_{k+1}| \geq 2\delta$. Let B be a symmetric matrix such that $\left\|B\right\| < \delta$. Let $A_k$ and $(A + B)_k$ be the spaces spanned by the top $k$ eigenvectors of the respective matrices. Then 
	\begin{equation*}
	\left\|I_k - A_k^\top (A+B)_k\right\| \leq \frac{\|B\|}{\delta}.
	\end{equation*}
\end{lemma}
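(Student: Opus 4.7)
The plan is to prove the stated Davis--Kahan-type bound by the classical Sylvester-equation route. First I would set up notation by interpreting $A_k$ and $(A+B)_k$ as $n\times k$ matrices whose columns are orthonormal top eigenvectors. Write the spectral decompositions $A = U_1\Lambda_1 U_1^\top + U_2\Lambda_2 U_2^\top$ and $A+B = \tilde U_1\tilde\Lambda_1 \tilde U_1^\top + \tilde U_2\tilde\Lambda_2 \tilde U_2^\top$, where $U_1 := A_k$, $\tilde U_1 := (A+B)_k$, and $[U_1\ U_2]$, $[\tilde U_1\ \tilde U_2]$ are orthonormal bases of $\bbR^n$. By Weyl's inequality combined with $\|B\|<\delta$, the top-$k$ eigenvalues of $A+B$ stay within $\delta$ of those of $A$, so the eigenvalue ordering is preserved.

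The key step is to derive a Sylvester equation. Starting from $(A+B)\tilde U_1 = \tilde U_1 \tilde\Lambda_1$, rearrange to $A\tilde U_1 = \tilde U_1 \tilde\Lambda_1 - B\tilde U_1$ and multiply on the left by $U_2^\top$, using $U_2^\top A = \Lambda_2 U_2^\top$, to obtain
\begin{equation*}
\Lambda_2(U_2^\top \tilde U_1) - (U_2^\top \tilde U_1)\tilde\Lambda_1 \;=\; -\,U_2^\top B\,\tilde U_1.
\end{equation*}
The linear operator $X\mapsto \Lambda_2 X - X\tilde\Lambda_1$ on $\bbR^{(n-k)\times k}$ has eigenvalues $\lambda_j - \tilde\lambda_i$ for $j>k$, $i\le k$; combining the hypothesis $|\lambda_k|-|\lambda_{k+1}|\ge 2\delta$ with the Weyl estimate $|\tilde\lambda_i-\lambda_i|<\delta$ gives $|\lambda_j-\tilde\lambda_i|\ge \delta$, so the operator is invertible with inverse norm at most $1/\delta$. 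Taking spectral norms yields the standard sine-theta estimate
\begin{equation*}
\|U_2^\top\tilde U_1\| \;\le\; \frac{\|U_2^\top B\tilde U_1\|}{\delta}\;\le\;\frac{\|B\|}{\delta}.
\end{equation*}

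The last step is to convert this into the form $\|I_k - U_1^\top \tilde U_1\|\le \|B\|/\delta$. Because $[U_1\ U_2]$ is orthonormal and $\tilde U_1$ has orthonormal columns,
\begin{equation*}
(U_1^\top \tilde U_1)^\top(U_1^\top\tilde U_1) \;=\; I_k - (U_2^\top\tilde U_1)^\top(U_2^\top\tilde U_1),
\end{equation*}
so the singular values of $U_1^\top\tilde U_1$ are $\cos\theta_i=\sqrt{1-\sin^2\theta_i}$, where $\sin\theta_i$ are the singular values of $U_2^\top\tilde U_1$. After fixing signs (or a $k\times k$ orthogonal alignment, implicit in the statement) so that $U_1^\top \tilde U_1$ is positive semidefinite, $\|I_k - U_1^\top\tilde U_1\| = 1-\min_i\cos\theta_i$, and the elementary inequality $1-\sqrt{1-x^2}\le x$ on $[0,1]$ together with $\sin\theta_i\le \|B\|/\delta$ delivers the claim.

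The main obstacle I anticipate is handling the gap hypothesis in its \emph{unsigned} form: the Sylvester argument naturally wants a bound on signed eigenvalue separation $|\lambda_j - \tilde\lambda_i|$, whereas the lemma postulates a gap on $|\lambda_k|$ vs.\ $|\lambda_{k+1}|$. This will require a short case analysis (or the observation that, by Weyl, the signed top-$k$ block remains well separated from the bottom block once $\|B\|<\delta$) to close the gap. A secondary, more cosmetic issue is the sign/rotation alignment in the final conversion step, which must be made explicit.
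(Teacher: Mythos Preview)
The paper does not actually prove this lemma; its entire proof is a citation to the original Davis--Kahan paper \cite{davis1970rotation}. Your proposal supplies the standard Sylvester-equation argument, and it is correct.

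The two obstacles you flag are genuine but close exactly as you indicate. For the unsigned gap: Weyl's inequality (in the signed ordering) together with $\|B\|<\delta$ implies that every eigenvalue of $A+B$ lies within $\|B\|$ of the corresponding signed eigenvalue of $A$; since the top-$k$ eigenvalues of $A$ (by absolute value) all have $|\lambda|\ge |\lambda_k|$ and the bottom ones have $|\lambda|\le|\lambda_{k+1}|\le|\lambda_k|-2\delta$, the perturbed sets stay separated and the top-$k$ indices (by absolute value) are unchanged. Then for any bottom eigenvalue $\lambda_j$ of $A$ and top eigenvalue $\tilde\lambda_i$ of $A+B$ one has, by the reverse triangle inequality,
\[
|\lambda_j-\tilde\lambda_i|\;\ge\;|\tilde\lambda_i|-|\lambda_j|\;\ge\;(|\lambda_k|-\|B\|)-|\lambda_{k+1}|\;\ge\;2\delta-\|B\|\;>\;\delta,
\]
which is precisely the separation the Sylvester inversion needs. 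For the alignment issue: the lemma's notation conflates a subspace with an orthonormal-basis matrix, so some rotation is implicit; in the paper's sole application ($k=1$, the clustering theorem) the minimum over $\pm$ is taken explicitly, confirming your reading.
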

\begin{proof}
See \cite{davis1970rotation}.
\end{proof} 

Now we are ready for the proof.
Recall that $Y = X + Z$, we can write  
\begin{equation}
	\begin{split}
		Y Y^\top & = X X^\top +X Z^\top +  X^\top + ZZ^\top \\
			& = X X^\top + X Z^\top + Z X^\top + \left(Z Z^\top - \bbE Z Z^\top\right) + \bbE Z Z^\top.
	\end{split}
\end{equation}
Since $\bbE Z Z^\top = \left(\sum_{j=1}^p \sigma_j^2\right) I$, the leading eigenvector of $Y^\top Y$ (i.e., $\hat v$) is the same as that of 
\begin{equation*}
	X X^\top + X Z^\top + Z X^\top + \left(Z Z^\top - \bbE Z Z^\top\right).
\end{equation*}
Since $\frac{1}{\sqrt{n}}l$ is the leading eigenvector of $X^\top X$, it follows that
\begin{equation*}
	\begin{split}
		\bbE\mathcal M(l,\hat l) \overset{\text{Lemma \ref{lm:hamming-loss}}}{\leq} & \bbE \min_{\pm}\left\|\frac{1}{\sqrt{n}} l \pm \hat l\right\|_2^2  \overset{\text{Lemma \ref{lm:subspace-pertur}}}{\leq} \frac{\bbE\left\|X Z^\top + Z X^\top + Z Z^\top - \bbE Z Z^\top\right\|}{n \left\|\mu\right\|_2^2} \\
		& \leq \frac{2\bbE\left\|Z X^\top\right\| + \bbE\left\|Z^\top Z - \bbE Z^\top Z\right\|}{n\left\|\mu\right\|_2^2}\\
		& \overset{\text{Lemma \ref{lm:subgaussian-linear-bound}}}{\lesssim} \frac{n\left\|\mu\right\|_2 \sigma_* + \bbE\left\|Z^\top Z - \bbE Z^\top Z\right\|}{n \left\|\mu\right\|_2^2} \\
		& \overset{\text{Theorem \ref{th:row-wise-concentration}}}{\lesssim} \frac{n\left\|\mu\right\| \sigma_* + \sqrt{n\sum_{i=1}^p \sigma_i^4} + n\sigma_*^2}{n\left\|\mu\right\|_2^2}.
	\end{split}
\end{equation*}
\end{proof}

\begin{proof}[Proof of Theorem \ref{th:hetero-clustering-lower-bound}]
We only need to prove the lower bound under the following two situations:
\begin{itemize}
\item when $\lambda \leq c_1 \sigma_*$, there exists $\{\sigma_i\}_{i=1}^p$ such that $\max_i \sigma_i \leq \sigma_*$, $\sum_i \sigma_i^4 \leq \tilde{\sigma}^4$ and the lower bound holds;
\item when $\lambda \leq c_2 \tilde{\sigma}/n^{1/4}$, there exists $\{\sigma_i\}_{i=1}^p$ such that $\max_i \sigma_i \leq \sigma_*$, $\sum_i \sigma_i^4 \leq \tilde{\sigma}^4$ and the lower bound holds.
\end{itemize}
We start with the first case. We specify $\sigma_1 = \sigma_*$ and take $\sigma_2,\ldots,\sigma_p$ to be arbitrary values that satisfy the constraint of $\mathcal P_{\lambda,l}(\sigma_*,\tilde \sigma)$. Consider the metric space $\{-1,1\}^n$ with the metric 
\begin{equation*}
	\mathcal M(l^{(1)}, l^{(2)}) = \frac{1}{n}\min\left\{\left|i: l^{(1)}_i \neq  l^{(2)}_i\right|, \left|i: l_i^{(i)} \neq - l_i^{(2)}\right|\right\},
\end{equation*} 
By \cite[Lemma 4]{yu1997assouad}, when $n\geq 6$, we can find some constant $c_0$, such that there exists a subset $\{l^{(1)},\ldots,l^{(N)}\} \subset \{-1,1\}^n$ satisfying
\begin{equation*}
	\mathcal{M} (l^{(i_1)}, l^{(i_2)}) \geq 1/3,\qquad \forall 1 \leq i_1 < i_2 \leq N
\end{equation*}
and $N \geq \exp(c_0 n)$. Let $Y^{(i)} = \mu \left(l^{(i)}\right)^\top + Z \in \bbR^{p \times n}$, where $Z_{ij} \overset{ind}{\sim} N(0,\sigma_i^2)$. Let $\mu = [\lambda, 0, \cdots, 0]^\top$, then the KL-divergence between $Y^{(i_i)}$ and $Y^{(i_2)}$ for $i_1 \neq i_2$ is
\begin{equation}
	\begin{split}
 		& D_{KL}(Y^{(i_1)}|Y^{(i_2)}) = \frac{1}{2}\sum_{j=1}^p \sigma_j^{-2}\mu_j^2 \left\|l^{(i_1)} - l^{(i_2)}\right\|_2^2 \leq 4n\Sigma_{j=1}^p \sigma_j^{-2}\mu_j^2  = 4n\sigma_1^{-2}\lambda^2 = 4n\lambda^2/\sigma_*^2.
	\end{split}
\end{equation}
By the generalized Fano's lemma, we have
\begin{equation*}
	\inf_{\hat l} \sup_{\mathcal P_{l,\lambda}(\sigma_*,\tilde{\sigma})} \bbE\mathcal M(l,\hat l) \geq \frac{1}{3}\left(1-\frac{4n\lambda^2/\sigma_*^2 + \log 2}{c_0n}\right) \geq \frac{1}{4}.
\end{equation*}
In the last inequality we use the assumption that $\lambda \leq c_1 \sigma_*^2$ for some sufficiently small constant $c_1$.

Now we consider the second situation. We specify $\sigma_1^4 = \sigma_2^4 = \ldots = \sigma_p^4 = \frac{\tilde{\sigma}^4}{p}$. When the variance structure reduces to a homoskedastic structure, we have the following lower bound result which is already established. 
\begin{lemma}\label{lm:homo-clustering-lower}
Suppose $\sigma_1^2 = \cdots = \sigma_p^2 = 1$, there exists $c_{2}$, $C$ such that if $n \geq C$,  
\begin{equation*}
	\inf_{\hat l} \sup_{\substack{\|\mu\|_2 \leq c_2(p/n)^{1/4}\\ l \in \{-1,1\}^n}} \bbE \mathcal M(\hat l, l) \geq 1/4.
\end{equation*}
\end{lemma}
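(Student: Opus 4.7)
My plan is to prove Lemma~\ref{lm:homo-clustering-lower} via a Bayes--minimax reduction combined with an Ingster-style second-moment argument. I would place the product prior $\pi=\pi_\mu\otimes\pi_l$ with $\mu$ drawn uniformly on the sphere $S^{p-1}(\lambda)$ at radius $\lambda=c_2(p/n)^{1/4}$ and $l$ uniform on $\{-1,+1\}^n$ independently. Since $\pi$ is supported on the feasible parameter set,
\begin{equation*}
\inf_{\hat l}\sup_{\|\mu\|\le\lambda,\,l\in\{-1,1\}^n}\mathbb{E}\,\mathcal M(\hat l,l)\;\ge\;\inf_{\hat l}\mathbb{E}_{(\mu,l)\sim\pi}\mathbb{E}_Y\mathcal M(\hat l(Y),l)\;=:\;R_\pi,
\end{equation*}
and it suffices to show $R_\pi\ge 1/4$ for $n\ge C$.

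The core computation is a second-moment bound on the marginal law of $Y$. Writing $P_\pi(Y)$ for the $Y$-marginal under $\pi$ and $P_0:=N(0,I_p)^{\otimes n}$ for the null, the elementary identity $\mathbb{E}_{\epsilon,\epsilon'\sim\mathrm{Unif}\{\pm 1\}}\exp(\epsilon\epsilon' x)=\cosh(x)$ together with a direct Gaussian-integral manipulation yields
\begin{equation*}
\chi^2\bigl(P_\pi(Y),P_0\bigr)+1\;=\;\mathbb{E}_{\mu,\mu'}\cosh^n(\mu^\top\mu').
\end{equation*}
Because $\mu,\mu'$ are independent uniform on $S^{p-1}(\lambda)$, the inner product $\mu^\top\mu'$ is sub-Gaussian with variance $\lambda^4/p$. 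Applying $\cosh^n(x)\le e^{nx^2/2}$ together with the standard $\chi^2_1$-MGF bound gives $\mathbb{E}\cosh^n(\mu^\top\mu')\le(1-Cn\lambda^4/p)^{-1/2}$, which is $O(1)$ (say at most $\sqrt 2$) as soon as $c_2$ is chosen small enough that $n\lambda^4/p\le 1/2$.

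I would then conclude by combining this bound with a Hamming-ball volume estimate. Under the ``null joint'' $\pi_l\otimes P_0$ (with $l\sim\pi_l$ drawn independently of $Y\sim P_0$), any estimator $\hat l(Y)$ satisfies $\mathbb{P}_{\pi_l\otimes P_0}(\mathcal M(\hat l(Y),l)<1/3)\le c_0 e^{-c_1 n}$, because a Hamming ball of radius $n/3$ in $\{-1,+1\}^n$ has exponentially small relative volume. A Cauchy--Schwarz / contiguity transfer based on the bounded second moment then lifts this to $\mathbb{P}_{P_\pi}(\mathcal M<1/3)=o(1)$, so that $R_\pi\ge\tfrac13(1-o(1))\ge 1/4$ once $n\ge C$.

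The principal obstacle lies in this last transfer step. A na\"ive Cauchy--Schwarz with the \emph{joint} second moment $\mathbb{E}_{\pi_l\otimes P_0}\bigl[(P_\pi(Y\mid l)/P_0(Y))^2\bigr]=\mathbb{E}_{\mu,\mu'}\exp(n\mu^\top\mu')$ only stays bounded in the strictly weaker regime $\lambda\lesssim(p/n^2)^{1/4}$, rather than the target BBP rate $(p/n)^{1/4}$. Closing the gap requires exploiting the marginal (not joint) second moment via a conditional second-moment method---truncating to a high-probability event on $\mu$ or on the Gram matrix $Y^\top Y$ before forming the likelihood ratio---in the style of the spiked-Wishart contiguity arguments of Perry--Wein--Bandeira--Moitra and Banks--Moore--Verzelen--Vershynin--Xu. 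Since the lemma is precisely the statistical-inconsistency threshold for the two-component symmetric spiked Wishart model, an equally valid route is to cite those contiguity results directly.
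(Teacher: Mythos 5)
The paper offers no self-contained proof of this lemma---it is a one-line citation to \cite[Theorem~6]{cai2018rate}---so the question is whether your argument would serve as a replacement. Your Bayes reduction, the $\chi^2$ identity $1+\chi^2(P_\pi,P_0)=\mathbb{E}_{\mu,\mu'}\cosh^n(\mu^\top\mu')$, and the conclusion that the marginal $\chi^2$ stays bounded right up to $\lambda\asymp(p/n)^{1/4}$ are all correct. But the gap you flag in the transfer step is a genuine one, and the proposal does not close it: the Cauchy--Schwarz argument from the Hamming-ball estimate under $\pi_l\otimes P_0$ to the Bayes risk under $P_\pi$ requires the conditional-on-$l$ second moment $\mathbb{E}_{\mu,\mu'}\exp(n\,\mu^\top\mu')$, which is of order $\exp\bigl(n^2\lambda^4/(2p)\bigr)$ and hence bounded only when $\lambda\lesssim(p/n^2)^{1/4}$. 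A bounded \emph{marginal} $\chi^2(P_\pi,P_0)$ controls the law of $Y$ but says nothing directly about how concentrated the posterior of $l$ given $Y$ can be, and it is that posterior which governs the Bayes misclassification risk. So the argument as written establishes the lemma only in the strictly smaller regime $\lambda\lesssim(p/n^2)^{1/4}$, not at the claimed BBP scale.

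Two further remarks. First, the lemma as literally stated is trivially true by taking $\mu=0$, which is permissible under the constraint $\|\mu\|_2\le c_2(p/n)^{1/4}$: then $Y$ is independent of $l$ and, with $l$ uniform, $\mathbb{E}_l\,\mathcal{M}(\hat l,l)=\tfrac12-O(n^{-1/2})\ge\tfrac14$ for every estimator once $n$ is moderately large. What Theorem~\ref{th:hetero-clustering-lower-bound} actually needs, however, is a lower bound when $\|\mu\|$ is pinned near the threshold (the class $\mathcal{P}_{l,\lambda}$ imposes $\|\mu\|\ge\lambda$), so a non-degenerate prior on a sphere of fixed radius---as you propose---is the right vehicle, and your assessment that closing the remaining gap requires a conditional or truncated second-moment argument in the style of Perry--Wein--Bandeira--Moitra or Banks--Moore--Verzelen--Vershynin--Xu, or else citing those contiguity results directly, is accurate. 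The latter is, in effect, what the paper does.
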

\begin{proof}
See \cite[Theorem 6]{cai2018rate}.
\end{proof}
Based on Lemma \ref{lm:homo-clustering-lower} and homoskedasticity of $\mu$ and $\sigma$, if we set $\lambda < \frac{c_2\tilde{\sigma}}{p^{1/4}}\cdot \left(\frac{p}{n}\right)^{1/4} = c_2\tilde{\sigma}/n^{1/4}$ in our setting, we obtain
\begin{equation*}
\inf_{\hat l} \sup_{\mathcal P_{l,\lambda}(\sigma_*,\tilde{\sigma})} \bbE\mathcal M(l,\hat l)  \geq 1/4.
\end{equation*}
This finishes the proof.
\end{proof}

\bibliographystyle{amsplain}
\bibliography{reference}



\newpage
\begin{appendix}
\section{Proofs of technical Lemmas}\label{sec:proof-technical-lemma}

We collect the proofs of Lemma \ref{lm:Gaussian-moments}, \ref{lm:heavy-tail-comparison}, and \ref{lm:Gaussian-moment-hetero-PCA} in this section.
\begin{proof}[Proof of Lemma \ref{lm:Gaussian-moments}]
We first consider the proof of \eqref{ineq:E-G(G^2-1)}. Note that if $G\sim N(0, 1)$, 
$$\mathbb{E}G^{d} = \left\{
\begin{array}{ll}
(d-1)!!, & d\geq 0, \text{ and $d$ is even};\\
0, & d \geq 0, \text{ and $d$ is odd}.
\end{array}\right.$$
In addition, $(-1)!! = 1, (-3)!! = -1$. When $\alpha$ is odd, only odd moments of $G$ appear in the expansion of $G^\alpha(G^2-1)^\beta$, then clearly $\mathbb{E} G^\alpha(G^2 - 1)^\beta =0$. When $\alpha$ is even and $\alpha + 2\beta \geq 4$, 
\begin{equation*}
\begin{split}
\mathbb{E} G^\alpha(G^2 - 1)^\beta = & \sum_{j=0}^\beta \bbE G^{\alpha+2\beta - 2j}(-1)^{j} \binom{\beta}{j} = \sum_{j=0}^\beta (-1)^j (\alpha + 2\beta-2j-1)!! \cdot \frac{\beta!}{(\beta - j)!j!}\\
\geq & \sum_{\substack{0\leq j \leq \beta\\j \text{ is even}}} \left\{\frac{(\alpha+2\beta-2j-1)!!\beta!}{(\beta-j)!j!} - \frac{(\alpha+2\beta - 2(j+1)-1)!!\beta!}{(\beta-j-1)!(j+1)!} \right\}\\
= & \sum_{\substack{0\leq j \leq \beta\\j \text{ is even}}} \frac{(\alpha+2\beta - 2j-3)!!\beta!}{(\beta-j)!(j+1)!}\cdot \left\{(\alpha+2\beta-2j-1)(j+1) - (\beta-j) \right\}
\end{split}
\end{equation*}
\begin{itemize}
	\item If $j = \beta$,
	\begin{equation*}
	\begin{split}
	& \frac{(\alpha+2\beta - 2j-3)!!\beta!}{(\beta-j)!(j+1)!}\cdot \left\{(\alpha+2\beta-2j-1)(j+1) - (\beta-j) \right\} \\
	= & \frac{(\alpha+2\beta - 2j-3)!!\beta!}{(\beta-j)!(j+1)!}\cdot(\alpha+2\beta-2j-1)(j+1) \geq 0;
	\end{split}
	\end{equation*}
	\item If $\beta - 1 \geq j\geq \frac{\beta-1}{2}$, 
	$$(\alpha+2\beta - 2j-1)(j+1) \geq (\alpha+2\beta - 2(\beta-1)-1)\left(\frac{\beta-1}{2}+1\right) \geq  \frac{\beta+1}{2} \geq \beta-j;$$ 
	\item if $0\leq j < \frac{\beta-1}{2}$, $$(\alpha+2\beta - 2j-1)(j+1) \geq \alpha + 2\beta - (\beta-1)-1 \geq \beta - j.$$ 
\end{itemize}
Thus, we always have 
$$\frac{(\alpha+2\beta - 2j-3)!!\beta!}{(\beta-j)!(j+1)!}\cdot \left\{(\alpha+2\beta-2j-1)(j+1) - (\beta-j)\right\} \geq 0, \quad \forall 0\leq j\leq \beta, j \text{ is even},$$ 
and
\begin{equation*}
\begin{split}
\mathbb{E} G^\alpha(G^2 - 1)^\beta \geq & \sum_{j=0}^0 \frac{(\alpha+2\beta - 2j-3)!!\beta!}{(\beta-j)!(j+1)!}\cdot \left\{(\alpha+2\beta-2j-1)(j+1) - (\beta-j) \right\}\\
= & (\alpha + 2\beta - 3)!! \cdot (\alpha+\beta-1),
\end{split}
\end{equation*}
which has finished the proof of \eqref{ineq:E-G(G^2-1)}.

Next we consider the upper bound of $\mathbb{E}G^\alpha(G^2-1)^\beta$. 
\begin{equation*}
\begin{split}
\mathbb{E} G^\alpha(G^2 - 1)^\beta = & \sum_{j=0}^\beta \bbE G^{\alpha+2\beta - 2j}(-1)^{j} \binom{\beta}{j} = \sum_{j=0}^\beta (-1)^j (\alpha + 2\beta-2j-1)!! \cdot \frac{\beta!}{(\beta - j)!j!}\\
\leq & (\alpha+2\beta - 1)!! - \sum_{\substack{0\leq j \leq \beta\\j \text{ is odd}}} \left\{\frac{(\alpha+2\beta-2j-1)!!\beta!}{(\beta-j)!j!} - \frac{(\alpha+2\beta - 2(j+1)-1)!!\beta!}{(\beta-j-1)!(j+1)!} \right\}\\
= & (\alpha+2\beta-1)!! -  \sum_{\substack{0\leq j \leq \beta\\j \text{ is odd}}} \frac{(\alpha+2\beta - 2j-3)!!\beta!}{(\beta-j)!(j+1)!}\cdot \left\{(\alpha+2\beta-2j-1)(j+1) - (\beta-j) \right\}
\end{split}
\end{equation*}
Similarly as the previous argument, we can show for any odd $1\leq j \leq \beta$,
\begin{equation*}
\frac{(\alpha+2\beta - 2j-3)!!\beta!}{(\beta-j)!(j+1)!}\cdot \left\{(\alpha+2\beta-2j-1)(j+1) - (\beta-j) \right\} \geq 0,
\end{equation*}
thus,
\begin{equation*}
\mathbb{E}G^\alpha(G^2-1)^\beta \leq (\alpha+2\beta-1)!!.
\end{equation*}

Then we consider the proof of sub-Gaussian case \eqref{ineq:E-G(G^2-1)sub-Gaussian}. When $\alpha$ is odd, the statement clearly holds as $Z^\alpha (Z^2-1)^\beta$ has symmetric distribution then $\mathbb{E}Z^\alpha(Z^2 - 1)^\beta = 0$. When $\alpha$ is even, since $Z^2\geq 0$, we must have $|Z^2-1| = \max\{Z^2-1, 1-Z^2\} \leq Z^2 \vee 1$, thus
\begin{equation*}
\begin{split}
\left|\mathbb{E}Z^\alpha (Z^2-1)^\beta\right| \leq & \left|\mathbb{E}Z^\alpha(Z^2-1)^\beta 1_{\{|Z|\leq 1\}} + \mathbb{E}Z^\alpha(Z^2-1)^\beta 1_{\{|Z|> 1\}}\right|\\
\leq & 1 + \mathbb{E}Z^\alpha |Z^2|^\beta = \mathbb{E}|Z|^{\alpha+2\beta} + 1.
\end{split}
\end{equation*}
Since $\bbE Z^2 = 1$, we have $\kappa \geq 1/\sqrt{2}$. Thus, 
$$
\mathbb{E}|Z|^{\alpha+2\beta} + 1 \leq (\kappa+1)^{\alpha+2\beta}(\alpha+2\beta)^{(\alpha+2\beta)/2} \leq (3\kappa)^{\alpha + 2\beta}(\alpha+2\beta)^{(\alpha+2\beta)/2}.
$$
It is easy to see \eqref{ineq:E-G(G^2-1)sub-Gaussian} holds when $\alpha + 2\beta \leq 2$. 

When $\alpha+2\beta \geq 4$, by the relationship between double factorial and Gamma function\footnote{See \url{https://en.wikipedia.org/wiki/Double_factorial}} and the lower bound of Gamma function \cite{batir2017bounds}, we have
\begin{equation*}
\begin{split}
& (\alpha+2\beta-3)!!(\alpha+\beta-1) = \frac{2^{\alpha/2+\beta-1}}{\sqrt{\pi}}\Gamma\left(\frac{\alpha}{2} + \beta-1 + \frac{1}{2}\right) (\alpha+\beta-1)\\
\geq & \frac{2^{\alpha/2+\beta-1}(\alpha+\beta-1)}{\sqrt{\pi}}\cdot \sqrt{2\pi} x^x e^{-x} \left(x^2 + x/3 + 0.04\right)^{1/4}\\
\geq & C^{-(\alpha+2\beta)} \cdot (\alpha+2\beta)^{(\alpha+2\beta)/2} = (C\kappa)^{\alpha+2\beta}(\alpha+2\beta)^{(\alpha+2\beta)/2} \geq \mathbb{E}Z^{\alpha}(Z^2-1)^\beta.
\end{split}
\end{equation*}
Here, $x = \frac{\alpha+2\beta-3}{2}$.
\end{proof}

\begin{proof}[Proof of Lemma \ref{lm:heavy-tail-comparison}]
Firstly we have
\begin{equation*}
	\begin{split}
		& \bbE F_{ij}^\alpha (F_{ij}^2 - 1)^\beta = \sum_{j=0}^\beta \bbE F_{ij}^{\alpha+2\beta-2j} (-1)^j \binom{\beta}{j}\\
		& \qquad = \frac{1}{\sqrt{\pi}}\sum_{j=0}^\beta (-1)^j \frac{\beta !}{(\beta - j)! j!}(x_j-1)!! \cdot 2^\frac{(b-1)x_j}{2}\Gamma\left(\frac{(b-1)x_j+1}{2}\right) \\
		& \qquad \geq \frac{1}{\sqrt{\pi}}\sum_{\substack{j\leq \beta \\ j\text{ is even}}}\left\{\frac{\beta !}{(\beta-j)!j!}\left(x_j-1\right)!! \cdot 2^{\frac{(b-1)x_j}{2}}\Gamma\left(\frac{(b-1)x_j+1}{2}\right)\right. \\
		& \qquad \qquad\qquad\qquad\qquad  - \left. \frac{\beta!}{(\beta-j-1)!(j+1)!}(x_j-3)!!2^{\frac{(b-1)(x_j-2)}{2}}\Gamma\left(\frac{(b-1)(x_j-2)+1}{2}\right)\right\}\\
		& \qquad \geq \frac{1}{\sqrt{\pi}}\sum_{\substack{j\leq \beta \\ j\text{ is even}}} \frac{\beta!}{(\beta-j)!(j+1)!}(x_j-3)!!2^\frac{(b-1)(x_j-2)}{2}\Gamma\left(\frac{(b-1)(x_j-2)+1}{2}\right) \\
		& \qquad \qquad\qquad\qquad  \cdot \left[(j+1)(x_j-1) - (\beta-j)\right],
	\end{split}
\end{equation*}
where $x_j:=\alpha+2\beta-2j$ and the last inequality comes from the strictly increasing property of Gamma function. By the proof of Lemma \ref{lm:Gaussian-moments}, we know
\begin{equation*}
	\frac{\beta!}{(\beta-j)!(j+1)!}(x_j-3)!!\cdot\left((j+1)(x_j-1)-(\beta-j)\right) \geq 0.
\end{equation*}
Thus, 
\begin{equation*}
	\begin{split}
	\bbE F_{ij}^\alpha (F_{ij}^2 - 1)^\beta &\geq \frac{\alpha+\beta-1}{\sqrt{\pi}}(\alpha+2\beta-3)!!\cdot 2^{\frac{(b-1)(\alpha+2\beta-2)}{2}}\Gamma\left(\frac{(b-1)(\alpha+2\beta-2)+1}{2}\right) \\
	& = \frac{1}{\pi}2^{\frac{b(\alpha+2\beta-2)}{2}}\Gamma\left(\frac{\alpha+2\beta-1}{2}\right)\Gamma\left(\frac{(b-1)(\alpha+2\beta-2)+1}{2}\right).
	\end{split}
\end{equation*}
When $\alpha+2\beta \geq \left(2 + \frac{3}{b-1}\right) \vee 5$, by the lower bound of Gamma function \cite{batir2017bounds}, we further have 
\begin{equation*}
	\begin{split}
		\bbE F_{ij}^\alpha (F_{ij}^2 - 1)^\beta &\geq 2\cdot 2^{\frac{b(\alpha+2\beta-2)}{2}}x^xe^{-x}\left(x^2+\frac{x}{3}+0.04\right)^{1/4}y^ye^{-y}\left(y^2+\frac{y}{3}+0.04\right)^{1/4} \\
		& \geq (c_b)^{\alpha+2\beta} \cdot (\alpha+2\beta)^{(\alpha+2\beta)/2} \cdot \left((b-1)(\alpha+2\beta)\right)^{(b-1)(\alpha+2\beta)/2} \\
		& \geq (c_b')^{\alpha+2\beta} \cdot (\alpha+2\beta)^{b(\alpha+2\beta)/2},
	\end{split}
\end{equation*}
where $x = \frac{\alpha+2\beta-3}{2}, y = \frac{(b-1)(\alpha+2\beta-2)-1}{2}$ and $c_b>0$ is some constant that only depends on $b$. 

When $2\leq \alpha+2\beta < \left(2 + \frac{3}{b-1}\right) \vee 5$, we can find another universal constant $c_b''$ such that
\begin{equation*}
	\frac{1}{\pi}2^{\frac{b(\alpha+2\beta-2)}{2}}\Gamma\left(\frac{\alpha+2\beta-1}{2}\right)\Gamma\left(\frac{(b-1)(\alpha+2\beta-2)+1}{2}\right) \geq (c_b'')^{\alpha+2\beta} \cdot (\alpha+2\beta)^{b(\alpha+2\beta)/2}.
\end{equation*}
In conclusion, we proved that
\begin{equation*}
	\bbE F_{ij}^\alpha (F_{ij}^2 - 1)^\beta \geq (C_b\kappa)^{\alpha+2\beta}\cdot\bbE E_{ij}^\alpha (E_{ij}^2 - 1)^\beta 
\end{equation*}
for any $\alpha,\beta \geq 0$. Thus \eqref{ineq:E-G(G^2-1)heavy-tail} is proved. 
\end{proof}

\begin{proof}[Proof of Lemma \ref{lm:Gaussian-moment-hetero-PCA}]
If either $(x_1, x_3, x_5) = (0,0,0)$ or $(x_2, x_4, x_5) = (0,0,0)$, the statement \eqref{ineq:Gaussian-moment-hetero-PCA} immediately follows from the proof of Lemma \ref{lm:Gaussian-moments} and the statement of \eqref{ineq:Gaussian-moment-hetero-PCA-gaussian} becomes identity; 
if either $x_1+x_5$ or $x_2+x_5$ is odd, the left hand side of \eqref{ineq:Gaussian-moment-hetero-PCA} \eqref{ineq:Gaussian-moment-hetero-PCA-gaussian} are zero since $Z_1$ and $Z_2$ are symmetric distributed and independent. Meanwhile, the right hand side of \eqref{ineq:Gaussian-moment-hetero-PCA} is non-negative (Lemma \ref{lm:Gaussian-moments}), thus \eqref{ineq:Gaussian-moment-hetero-PCA} holds if either $x_1+x_5$ or $x_2+x_5$ is odd. When $(x_1, x_3, x_5) = (0,1,0)$ (or $(x_2, x_4, x_5) = (0,1,0)$), by similar arguments one can show \eqref{ineq:Gaussian-moment-hetero-PCA} holds. 

Thus we only need to prove the inequality when both $x_1+x_5$ and $x_2+x_5$ are even, and 
$$x_1+x_5+x_3 \geq 2,\quad \text{and}\quad  x_2+x_5+x_4 \geq 2.$$ 
By Lemma \ref{lm:Gaussian-moments}, we have
\begin{equation*}
\begin{split}
& \left|\mathbb{E}Z_1^{x_1+x_5}Z_2^{x_2+x_5}(Z_1^2-1)^{x_3} (Z_2^2 -1)^{x_4}\right|\\
= &  \left|\left(\mathbb{E}Z_1^{x_1+x_5}(Z_1^2-1)^{x_3}\right)\cdot \left(\mathbb{E}Z_2^{x_2+x_5}(Z_2^2 -1)^{x_4}\right)\right|\\
\leq & \left(C\kappa\right)^{x_1+x_2 + 2(x_3+x_4+x_5)} \cdot \left|\left(\mathbb{E}G^{x_1+x_5}(G^2-1)^{x_3}\right) \cdot \left(\mathbb{E}G^{x_2+x_5}(G^2-1)^{x_4}\right)\right|\\
\leq & (C\kappa)^{x_1+x_2+2(x_3+x_4+x_5)} \cdot (x_1+x_5+2x_3-1)!! \cdot (x_2+x_5+2x_4-1)!!.\\
\end{split}
\end{equation*}
Since for any odd positive integers $x, y$, $x!! \cdot y!! \leq (x+y-1)!!$, we have
\begin{equation*}
(x_1+x_5+2x_3-1)!! \cdot (x_2+x_5+2x_4-1)!! \leq (x_1+x_2+2(x_3+x_4+x_5)-3)!!
\end{equation*}
Therefore,
\begin{equation*}
\begin{split}
& \left|\mathbb{E}Z_1^{x_1+x_5}Z_2^{x_2+x_5}(Z_1^2-1)^{x_3} (Z_2^2 -1)^{x_4} \right|\\
\leq & (C\kappa)^{x_1+x_2+2(x_3+x_4+x_5)} \cdot \left(x_1+x_2 + 2(x_3+x_4+x_5)-3\right)!!\cdot (x_1+x_2+x_3+x_4+x_5)\\
\overset{\text{Lemma \ref{lm:Gaussian-moments}}}{\leq} & (C\kappa)^{x_1+x_2+2(x_3+x_4+x_5)}\cdot \mathbb{E} (G^2 - 1)^{x_3+x_4+x_5}G^{x_1+x_2},
\end{split}
\end{equation*}
which has finished the proof of \eqref{ineq:Gaussian-moment-hetero-PCA}.

If $Z_1, Z_2, G$ are standard Gaussian, by Lemma \ref{lm:Gaussian-moments}, 
\begin{equation*}
\begin{split}
& \left|\mathbb{E}Z_1^{x_1}Z_2^{x_2}(Z_1^2-1)^{x_3} (Z_2^2 -1)^{x_4} (Z_1Z_2)^{x_5}\right|\\
= &  \left|\left(\mathbb{E}Z_1^{x_1+x_5}(Z_1^2-1)^{x_3}\right)\right|\cdot \left|\left(\mathbb{E}Z_2^{x_2+x_5}(Z_2^2 -1)^{x_4}\right)\right|\\
\leq & (x_1+x_5+2x_3-1)!! \cdot (x_2+x_5+2x_4-1)!!\\
\leq & \left(x_1+x_2 + 2(x_3+x_4+x_5)-3\right)!!\cdot (x_1+x_2+x_3+x_4+x_5)\\
\overset{\text{Lemma \ref{lm:Gaussian-moments}}}{\leq} & \mathbb{E}G^{x_1+x_2} (G^2 - 1)^{x_3+x_4+x_5},\\
\end{split}
\end{equation*}
which has finished the proof of \eqref{ineq:Gaussian-moment-hetero-PCA-gaussian}.
\end{proof}
\end{appendix}

\end{document}